\numberwithin{equation}{section}
\theoremstyle{plain}
\newtheorem{theorem}{Theorem}[section]
\newtheorem{remark}[theorem]{Remark}
\newtheorem{lemma}[theorem]{Lemma}
\newtheorem{proposition}[theorem]{Proposition}
\theoremstyle{definition}
\newtheorem{definition}[theorem]{Definition}
\def\flot{X}
\def\ed{\mathrm{d}}
\def\exp{\operatorname{exp}}
\newcommand{\barr}[1]{#1}
\newcommand\rwidehat[1]{%
\savestack{\tmpbox}{\stretchto{%
  \scaleto{%
    \scalerel*[\widthof{\ensuremath{#1}}]{\kern-.6pt\bigwedge\kern-.6pt}%
    {\rule[-\textheight/2]{1ex}{\textheight}}
  }{\textheight}%
}{0.5ex}}%
\stackon[1pt]{#1}{\tmpbox}%
}
\newcommand{\mres}{\mathbin{\vrule height 1.6ex depth 0pt width
0.13ex\vrule height 0.13ex depth 0pt width 1.3ex}}
\let\mc=\mathcal
\let\mr=\mathrm
\begin{document}
\date{\today}
\title[Convergence of a Lagrangian discretization for barotropic fluids]{Convergence of a Lagrangian discretization for barotropic fluids and pourous media flow}
\author[T. O. Gallou\"et]{Thomas O. Gallou\"et}
\address{Thomas O. Gallou\"et  (\href{mailto:thomas.gallouet@inria.fr}{\tt thomas.gallouet@inria.fr}), Team Mokaplan, Inria Paris  75012 Paris, CEREMADE, CNRS, UMR 7534, Université Paris-Dauphine, PSL University, 75016 Paris, France} 
\author[Q. Mérigot]{Quentin Mérigot}
\address{Quentin Mérigot (\href{mailto:quentin.merigot@universite-paris-saclay.fr}{\tt quentin.merigot@universite-paris-saclay.fr}), Université Paris-Saclay, CNRS, Laboratoire de mathématiques d’Orsay, 91405, Orsay, France \and Institut universitaire de France (IUF)
} 
\author[A. Natale]{Andrea Natale}
\address{Andrea Natale (\href{mailto:andrea.natale@inria.fr}{\tt andrea.natale@inria.fr}), Inria, Univ. Lille, CNRS, UMR 8524 - Laboratoire Paul Painlevé, F-59000 Lille, France
}

\maketitle
\renewcommand\thesubfigure{}

\begin{abstract}

 When expressed in Lagrangian variables, the equations of motion for compressible (barotropic) fluids have the structure of a classical Hamiltonian system in which the potential energy is given by the internal energy of the fluid. The dissipative counterpart of such a system coincides with the porous medium equation, which can be cast in the form of a gradient flow for the same internal energy. 
Motivated by these related variational structures, we propose a particle method for both problems in which the internal energy is replaced by its Moreau-Yosida regularization in the $L^2$ sense, which can be efficiently computed as a semi-discrete optimal transport problem. Using a modulated energy argument which exploits the convexity of the problem in Eulerian variables, we prove quantitative convergence estimates towards smooth solutions. We verify such estimates by means of several numerical tests.
\end{abstract}

\section{Introduction}
The Euler equations describing the evolution of a barotropic fluid in a compact domain $M\subset \mathbb{R}^d$ with Lipschitz boundary and on a time interval $[0,T]$ are given by the following system of equations:
\begin{equation}\label{eq:euler}
\left\{
\begin{array}{l}
\partial_t (\rho u) + \nabla \cdot (\rho u \otimes u) +\nabla P (\rho) =0 \,,\\
\partial_t \rho + \mathrm{div} (\rho u) = 0 \,,
\end{array}\right.
\end{equation}
where $\rho(t,x)\geq 0 $ is the fluid density, $u(t,x)\in\mathbb{R}^d$ is the Eulerian velocity and the function $P: [0,\infty) \rightarrow \mathbb{R}$ defines the pressure as a function of the density. The first equation in \eqref{eq:euler} is generally referred to as the momentum equation, whereas the second is the continuity equation and describes local mass conservation in the fluid. The system is supplemented by the initial and boundary conditions:
\[
\rho(0, \cdot ) = \rho_0 \,, \quad u(0,\cdot) = u_0 \,, \quad u \cdot n_{\partial M} =0\, \text{ on }[0,T]\times \partial M \,,
\]
where $n_{\partial M}$ is the outward normal to the boundary $\partial M$. Smooth solutions conserve the total energy
\begin{equation}\label{eq:energytot}
\int_M \frac{1}{2}  |u|^2 \rho \ed x + \int_M U(\rho)\, \ed x\,,
\end{equation}
where $U: [0,\infty) \rightarrow \mathbb{R}$ is a smooth strictly convex function, superlinear at infinity, defining the internal energy of the fluid. This is related to the pressure by the thermodynamic relations
\begin{equation}\label{eq:thermodynamic}
P(r) = r U'(r) - U(r) \,, \quad P'(r) = r U''(r).
\end{equation}
Different choices of the internal energy $U$ lead to different models. The two most classical examples are: 
\begin{enumerate}
\item polytropic fluids, which correspond to $U(r) = r^m/(m-1)$ with $m>1$, and $P(r) = r^m$ (these include isentropic fluids, and the Saint-Venant system modelling gravity driven shallow water flows for $m=2$);
\item isothermal fluids, which correspond to $U(r) = r\log(r) -r$ and $P(r)=r$.
\end{enumerate}

Adding a friction term $-\zeta \rho u$ on the right-hand side of the momentum equation, i.e. the first equation in the system \eqref{eq:euler}, and considering the high friction limit $\zeta \rightarrow \infty$, one formally obtains $u = -\nabla U'(\rho)$, which substituted into the continuity equation yields 
\begin{equation}\label{eq:porous}
\partial_t \rho - \Delta P(\rho) =0\,.
\end{equation}
In particular, the choice  $U(r) = r^m/(m-1)$ with $m>1$ and $P(r)=r^m$, which is associated with polytropic fluids, yields the porous medium equation. Similarly, the choice $U(r) = r \log r-r$ and $P(r) =r$ corresponding to isothermal fluids, yields the heat equation.

\subsection{Lagrangian formulation} 


For both the compressible Euler system \eqref{eq:euler} and its high friction limit \eqref{eq:porous}, the density evolves according to the continuity equation with respect to a time-dependent vector field $u$.  Let $S_0 \subseteq M$ be the support of the initial density $\rho_0$ and $\flot: [0,T] \times S_0 \rightarrow M$ be the flow associated with $u$, i.e.\ the time-dependent map satisfying the flow equation 
\begin{equation}\label{eq:flow} \dot{\flot}_t = u(t,{\flot}_t) \end{equation}
with initial condition $\flot_0 = \mathrm{Id}|_{S_0}$, where $\mathrm{Id}$ is the identity map on $\mathbb{R}^d$. If $\rho_0$ and $u$ are sufficiently regular, then the flow equation \eqref{eq:flow} and the continuity equation have both a unique strong solution, and the density is the pushforward of $\rho_0$ by the flow, i.e.\ $\rho(t,\cdot)=\flot_{t\#} \rho_0$, where the pushforward is defined by the condition 
\begin{equation}\label{eq:density}
(X_{t\#} \rho_0)[B] =  \rho_0 [\flot_t^{-1}(B)] \quad  \textrm{  for any }  B \subset  M.
\end{equation} 
In general, equation \eqref{eq:density} defines $X_{t\#} \rho_0$ only as a measure on $M$. However, if $X_t$ is a smooth invertible map, $X_{t\#} \rho_0$ is absolutely continuous with respect to the Lebesgue measure $\ed x$, and we identify it with its smooth density.

Using equation \eqref{eq:flow} and \eqref{eq:density}, the total energy of the fluid \eqref{eq:energytot} can then be written in terms of $\flot$ only as follows:
\begin{equation}\label{eq:energyfluid}
\int_M \frac{1}{2}  |\dot{\flot}_t|^2 \rho_0 \ed x + \int_M U(\flot_{t\#} \rho_0)\, \ed x\,.
\end{equation}
Let $\mathbb{X}\coloneqq L^2_{\rho_0}(S_0;\mathbb{R}^d)$. In the smooth setting, we can interpret the energy \eqref{eq:energyfluid} as a functional on curves of smooth invertible maps in $C^\infty(S_0;M)$, viewed as a manifold in $\mathbb{X}$ with the induced metric. The associated Euler-Lagrange equations coincide with Newton's second law:
\begin{equation}\label{eq:secondorder}
\ddot{\flot}_t = - \nabla_{\mathbb{X}} \mathcal{F}(\flot_t)\,, \quad \mathcal{F}(\sigma) \coloneqq \int_M U(\sigma_{\#} \rho_0)\, \ed x\,,
\end{equation}
where we identify the gradient  $\nabla_{\mathbb{X}} \mathcal{F}(\flot_t)$ with an element of $\mathbb{X}$ (see Remark \ref{Re:caclulgradientformel} for a formal computation of $\nabla_{\mathbb{X}} \mathcal{F}(\flot_t)$). Equation \eqref{eq:secondorder} is the Lagrangian equivalent to the momentum equation in \eqref{eq:euler}, and in particular from its solutions one can retrive the solutions to the Euler system \eqref{eq:euler} using the flow equation \eqref{eq:flow} and the definition of pushforward  \eqref{eq:density}.

In the case of the high friction limit \eqref{eq:porous}, the flow evolves according to a gradient flow dynamics, which correspond to the equation:
\begin{equation}\label{eq:gradient}
\dot{\flot}_t =  -\nabla_{\mathbb{X}} \mathcal{F}(\flot_t)\,. 
\end{equation}
Here, equation \eqref{eq:gradient} is equivalent to the condition $ u =- \nabla U'(\rho)$, and from its solutions one can retrive the solutions to \eqref{eq:porous} by pushforward of the initial density as in \eqref{eq:density}. 


The point of view described above for the compressible Euler system is one of the possible generalizations of the approach developed by Arnold for the incompressible Euler equations (see, e.g., Proposition 2.7 in \cite{khesin2018geometric}), which he intrepreted as the geodesic equation on the group of volume-preserving diffeomorphisms with the $L^2$ metric \cite{arnold1966geometrie}. 
On the other hand, the gradient flow structure in \eqref{eq:gradient} is the Lagrangian counterpart of the Wasserstein gradient flow interpretation of equation \eqref{eq:porous}, developed in the celebrated works of Otto \cite{otto2001geometry} and Jordan, Kinderlehrer, and Otto \cite{jordan1998variational}.

 In this paper, we will construct discrete versions of the systems \eqref{eq:secondorder} and \eqref{eq:gradient} in which the flow is approximated by a curve of (non-smooth and non-injective) maps belonging to a finite-dimensional subpace of $\mathbb{X}$. As a consequence of this extrinsic point of view, we will  regard the internal energy $\mathcal{F}$ in equation \eqref{eq:secondorder} as a real-valued functional on the whole space $\mathbb{X}$, which we set to $+\infty$ when $\sigma_{\#} \rho_0$ is not absolutely continuous with respect to the Lebesgue measure $\ed x$ restricted to $M$.

\subsection{Space discretization}\label{sec:spacedisc}
We now turn to the design of the Lagrangian scheme, i.e.\ an evolutive system for a finite number of particles, to approximate both Euler and gradient flows. In order to define the evolution of the particles we introduce a discrete equivalent of the Lagrangian variational structure highlighted in the previous section. This also allows us to preserve at the discrete level the link between the two models described above.

Let $N\in \mathbb{N}^*$ and consider a partition $\mathcal{P}_N:=(P_i)_{1\leq i \leq N}$ of the initial support $S_0 \subseteq M$ in $N$ regions with $h_N \coloneqq \max_i \mathrm{diam}(P_i) \lesssim N^{-d}$. We define $\mathbb{X}_N \subset \mathbb{X}$ as the space of functions that are constant on each subdomain $P_i$, i.e.\
\[
\mathbb{X}_N \coloneqq \{X_N \in \mathbb{X} ~|~ X_N(\omega) = X_N^i \in \mathbb{R}^d ~ \text{ for a.e. } \omega \in P_i,~ 1\leq i \leq N\}\,.
\]
Then, we discretize the flow $\flot$ by a curve $\flot_N: [0,T]\rightarrow \mathbb{X}_N$, and for any $t\in[0,T]$ we identify $\flot_N(t)$ with the vector of the position of the particles $(\flot_N^i(t))_i\in \mathbb{R}^{dN}$ where $\flot_N^i(t)\in\mathbb{R}^d$ is the image of any point in $P_i$ by the map $\flot_N(t)$ and therefore carries a mass $\rho_0[P_i]$. As in the continuous case the density of the fluid is given by the pushforward  $\rho_N(t) = \flot_N(t)_\# \rho_0$, or more explicitly by the sum of all the particles weighted by their respective masses: 
\begin{equation}\label{eq:densityN}
\rho_N(t) = \sum_{i=1}^N \rho_0[P_i] \delta_{\flot_N^i(t)}\,.
\end{equation}

Since $\rho_N(t)$ is not absolutely continuous, the internal energy $\mathcal{F}$ is identically $+\infty$ on all of $\mathbb{X}_N$, and in order to define our numerical approximation, we need to replace it by a regularized version. In this paper we consider the Moreau-Yosida regularization of $\mathcal{F}$, which is given by
\begin{equation}\label{eq:my}
\mathcal{F}_\varepsilon (\flot) \coloneqq \inf_{\sigma \in \mathbb{X}} \frac{\|\flot - \sigma\|^2_{\mathbb{X}} }{2\varepsilon} + \mathcal{F}(\sigma)\,,
\end{equation}
for any $\flot \in \mathbb{X}$ and for a fixed $\varepsilon>0$. Note that problem \eqref{eq:my} always admits minimizers when  $\flot \in \mathbb{X}_N$, but these are in general not unique. 

In order to mimic the continuous case, the discrete dynamics is thus given by the Euler (resp. gradient) flow of $\mathcal{F}_\varepsilon$ in $\left(\mathbb{X}_N,  L^2_{\rho_0} \right)$. More precisely, the space discretization of the Euler system \eqref{eq:euler} reads as follows:
\begin{equation}\label{eq:disceuler}
\ddot{\flot}_N(t) = - P_{\mathbb{X}_N} \nabla_\mathbb{X} \mathcal{F}_\varepsilon(\flot_N(t)) \,, \quad \flot_N(0) = \mathrm{Id}_N \,, \quad \dot{\flot}_N(0) = u_0 \circ \mathrm{Id}_N \,,
\end{equation}
where $P_{\mathbb{X}_N}$ is the $L^2_{\rho_0}$ projection onto $\mathbb{X}_N$, and we set $ \mr{Id}_N \coloneqq P_{\mathbb{X}_N}\mathrm{Id}|_{S_0}$. Note that the left-hand side of equation \eqref{eq:disceuler} can be identified with the vector collecting the acceleration of the particles $(\ddot{\flot}_N^i(t))_i \in \mathbb{R}^{dN}$. The right-hand side is just the gradient of $\mathcal{F}_\varepsilon$ viewed as a function on $\mathbb{X}_N$, and it is uniquely defined for almost every point in $\mathbb{X}_N$ (see Proposition \ref{prop:gradient} for a precise statement). In particular, we have 
\begin{equation}
P_{\mathbb{X}_N} \nabla_\mathbb{X} \mathcal{F}_\varepsilon(\flot_N) = \frac{\flot_N - P_{\mathbb{X}_N} \flot^\varepsilon_N}{\varepsilon} \,,\quad \flot_N^\varepsilon \in \underset{\sigma \in \mathbb{X}}{\mathrm{argmin}} \, \frac{\|\flot_N - \sigma\|^2_{\mathbb{X}} }{2\varepsilon} + \mathcal{F}(\sigma)\,,
\end{equation}
for almost any $\flot_N \in \mathbb{X}_N$. As in the continuous setting,  the total energy of the system at time $t$ is given by the sum of the kinetic and internal energy, where we replace now the internal energy by its regularized version:
\begin{equation}\label{eq:totenergy}
 \mc{E}_\varepsilon(t,\flot_N) \coloneqq \sum_{i=1}^N \frac{1}{2} |\dot{\flot}_N^i(t)|^2 \rho_0[P_i] + \mathcal{F}_\varepsilon(\flot_N(t))\,,
\end{equation}
and this is conserved by smooth solutions of \eqref{eq:disceuler}.

Similarly, the discrete version of the gradient flow \eqref{eq:gradient} is given by
\begin{equation}\label{eq:discgradient}
\dot{\flot}_N(t) = - P_{\mathbb{X}_N} \nabla_\mathbb{X} \mathcal{F}_\varepsilon(\flot_N(t))\,, \quad \flot_N(0) = \mr{Id}_N\,.
\end{equation}
Here, the total energy at time $t$ is simply given by the internal energy $\mathcal{F}_\varepsilon(\flot_N(t))$, and it is dissipated by smooth solutions of \eqref{eq:discgradient}. 

\subsection{Time discretization} \label{sec:timediscrete}
The variational structure of the space-discrete systems described so far can be exploited to design a stable time discretization. The method we describe here consists in considering different approximations of the energy in each time step, and is modelled on the strategy proposed by Brenier in \cite{brenier2000derivation}. 


Let $\tau>0$ a fixed time step, $N_T \in \mathbb{N}^*$ be the number of time steps with $T = \tau N_T$, and $t_n \coloneqq n \tau$ for any $0\leq n \leq N_T$. We define a discrete-time approximation of system \eqref{eq:disceuler}, by considering the $C^1$ curves $\flot_N: [0, T ]\mapsto \mathbb{X}_N$ satisfying in each time interval $[t_n, t_{n+1})$ the equation 
\begin{equation}\label{eq:disceulertime}
\ddot{\flot}_N(t) = - \frac{\flot_N(t) - P_{\mathbb{X}_N} \flot_N^\varepsilon(t_n)}{\varepsilon} \,, 
\end{equation}
where 
\begin{equation}\label{eq:xneps}
 \flot_N^\varepsilon(t_n) \in \underset{\sigma \in \mathbb{X}}{\mathrm{argmin}} \, \frac{\|\flot_N(t_n)  - \sigma\|^2_{\mathbb{X}} }{2\varepsilon} + \mathcal{F}(\sigma)\,,
\end{equation}
and with the same initial condition as in \eqref{eq:disceuler}. This system is conservative in each interval $[t_n,t_{n+1})$ for the energy 
\begin{equation}\label{eq:totenergyd}
\mathcal{E}_{\varepsilon}^n(t,\flot_N) \coloneqq \sum_{i=1}^N \frac{1}{2} |\dot{\flot}_N^i(t)|^2 \rho_0[P_i] + \frac{\|\flot_N(t)  - \flot_N^\varepsilon(t_n)\|^2_{\mathbb{X}} }{2\varepsilon} + \mathcal{F}(\flot_N^\varepsilon(t_n))\,.
\end{equation}
The total energy $\mathcal{E}_\varepsilon(t,\flot_N)$ defined in \eqref{eq:totenergy} is however dissipated in general since, by definition of the regularized energy $\mathcal{F}_\varepsilon$, we have
\begin{equation}\label{eq:energystab}
\mathcal{E}_{\varepsilon}(t_{n+1},\flot_N) \leq \mathcal{E}_{\varepsilon,\tau}^n(t_{n+1},\flot_N) = \mathcal{E}_{\varepsilon,\tau}^n(t_{n},\flot_N) = \mathcal{E}_{\varepsilon}(t_{n},\flot_N)\,.
\end{equation}

The discrete-time approximation of the gradient flow \eqref{eq:discgradient} is given by a continuous curve  $\flot_N: [0, T ]\mapsto \mathbb{X}_N$ which on each interval $[t_n,t_{n+1})$ is the gradient flow on $\mathbb{X}_N$ for the energy:
\begin{equation}\label{eq:regpotential}
 \frac{\|\flot_N(t)  - \flot_N^\varepsilon(t_n)\|^2_{\mathbb{X}} }{2\varepsilon} + \mathcal{F}(\flot_N^\varepsilon(t_n))\,.
\end{equation}
More explicitly, a discrete solution is any  $C^0$ curve $\flot_N: [0, T ]\mapsto \mathbb{X}_N$  which satisfies in each time interval $[t_n,t_{n+1})$,
\begin{equation}\label{eq:discgradienttime}
\dot{\flot}_N(t) = - \frac{\flot_N(t) - P_{\mathbb{X}_N} \flot_N^\varepsilon(t_n)}{\varepsilon} \,,
\end{equation}
with  $\flot_N^\varepsilon(t_n)$ defined as in \eqref{eq:xneps}, 
and the same initial condition as in \eqref{eq:discgradient}. Also in this case the internal energy $\mathcal{F}_\varepsilon(\flot_N(t))$ is dissipated along the evolution, since we have
\begin{equation}\label{eq:energystabgrad}
\mathcal{F}_\varepsilon(\flot_N(t_{n+1})) \leq   \frac{\|\flot_N(t_{n+1})  - \flot_N^\varepsilon(t_n)\|^2_{\mathbb{X}} }{2\varepsilon} + \mathcal{F}(\flot_N^\varepsilon(t_n)) \leq \mathcal{F}_\varepsilon(\flot_N(t_{n}))\,.
\end{equation}

\subsection{Relation with previous works and convergence results}

Using a Lagrangian formulation for the discretization of problems \eqref{eq:euler} and \eqref{eq:porous} enables us to reproduce the conservative and gradient flow structure of the corresponding models. In turn, this allows us to construct stable numerical methods as in \eqref{eq:disceulertime} and \eqref{eq:discgradienttime} to discretize their solutions. 
Similar strategies were already explored in the 1990s, during the emergence of particle methods, for example in the context of the discretization of the incompressible Euler equations in the works of Buttke \cite{buttke1993velicity} and Russo \cite{russo1998impulse}. 
Such methods can be seen as instances of the more general Smoothed Particle Hydrodynamics (SPH) discretizations, where the interaction forces amongst the particles are computed by reconstructing the fluid density through convolution with a fixed kernel (see, e.g., the review articles \cite{lind2020review,monaghan2005smoothed} and references therein), and which have been widely used in the context of the discretization of fluid models. 

Recent SPH methods explicitely exploit the variational structure of the models for the construction of the method itself as in \cite{evers2018continuum}. In the same article, the Authors also established a general (non-quantitative) convergence result towards measure-valued solutions of problem \eqref{eq:euler} for its discretization in space only. In another recent work \cite{franz2018convergence}, the Authors proved quantitative convergence estimates with modulated energy techniques but limited to the case $P(r) =r^2$ and for the discretization in space only. This last work also highlights how the choice of the kernel is crucial to obtain convergence.

The discretization strategy we use in this paper is closely related to the one developed by Brenier \cite{brenier2000derivation}, who proposed a discretization of incompressible Euler which replaces the incompressibility constraint by a potential term given by the $L^2$ distance from the set of measure-preserving maps, discretized as permutations of a fixed regular grid. The potential term used by Brenier can be reinterpreted as a Moreau-Yosida regularization (as in \eqref{eq:my}) of an energy given by the convex indicator function of the Lebesgue measure.
Gallou\"et and Mérigot  \cite{gallouet2018lagrangian} later used a similar approach, but rephrased as a particle method, which allowed them to employ efficient semi-discrete optimal transport techniques to compute the discrete solution, and at the same time improved the convergence estimates of \cite{brenier2000derivation} using a modulated energy approach. Note that the use of semi-discrete optimal transport techniques to simulate fluids was first launched by the work of Mérigot and Mirebeau \cite{merigot2016minimal} to solve the geodesic problem associated with the incompressible Euler equations.

Our convergence results generalize the one in \cite{gallouet2018lagrangian} to the compressible and gradient flow setting. Differently from SPH methods, here the density is reconstructed via a Moreau-Yosida regularization (i.e.\ as the push-forward of $\rho_0$ by the regularized flow $\flot_N^\varepsilon$), which eliminates the problem of selecting a kernel, the reconstruction being deeply linked with the energy itself (see Proposition \ref{prop:gradient}). On the other hand, the kernel length-scale parameter of SPH methods is replaced here by the parameter $\varepsilon$ in the regularized functional \eqref{eq:my}. 

The main results of this paper are contained in Theorem \ref{th:convsemieuler} and \ref{th:convsemigradient} below. The central issue of the proofs is the construction of an appropriate modulated energy to measure the discrepancy error between the discrete and continuous solution. In this work we construct a modulated energy exploiting the convexity of the energy in the Eulerian setting, which is lost in the Lagrangian formulation, and the particular structure of the Moreau-Yosida regularization.  
It should be noted that for convex energies, modulated energy estimates of the type we use here are classical tools for the study of problems \eqref{eq:euler} and \eqref{eq:porous} (see, e.g., Chapter 5 in \cite{dafermos2005hyperbolic}): namely, to prove weak-strong stability and uniqueness results, and to establish convergence in the high friction limit from entropy weak solutions of the Euler equations \eqref{eq:euler} with friction to porous media flow \eqref{eq:porous} \cite{lattanzio2013relative}. Note also that such tecnhiques are not limited to the cases we consider in this article, and can be generalized to treat also less regular energies (see, e.g.,  \cite{giesselmann2017relative,lattanzio2017gas}, for a framework covering the Euler-Korteweg and Euler-Poisson theory).

Another important point is related to the time discretization. The method we use in this work, described in Section \ref{sec:timediscrete}, directly derives from that used by Brenier in \cite{brenier2000derivation} for the incompressible Euler equations. It is specially adapted to the structure of the Moreau-Yosida regularization, and consists in devicing a quadratic approximation of the energy (see equation \eqref{eq:regpotential}) which dominates the regularized energy over each time-step. This naturally implies the stability of the discrete solutions (see equations \eqref{eq:energystab} and \eqref{eq:energystabgrad}), which is an essential element for the convergence results below. { Note that symplectic integrators \cite{hairer2006geometric} could be another natural choice for the discretization of the Hamiltonian system \eqref{eq:disceuler}. This choice was explored in \cite{gallouet2018lagrangian} for incompressible Euler, but it is more difficult to analyze due to the lack of an explicit control of the continuous energy of the system}. Another approach which we do not explore in this paper is the time discretization developed in \cite{gangbo2009optimal,cavalletti2014variational} (see also its numerical implementation in \cite{westdickenberg2010variational}) which is better adapted to the non-smooth setting since it is designed to overcome the non-uniqueness issues related to the notion of entropy solutions. 

The convergence estimate we obtain for the discretization of \eqref{eq:euler} is the following:

\begin{theorem}\label{th:convsemieuler}
Suppose that $(\barr{\rho},\barr{u}):[0,T]\times M \rightarrow [0,\infty) \times \mathbb{R}^d$ is a strong solution to \eqref{eq:euler} such that $u \cdot n_{\partial M} = 0$ on $[0,T]\times \partial M$, with  $U:[0,\infty) \rightarrow \mathbb{R}$ being a smooth strictly convex and superlinear function such that \eqref{eq:pbound} holds. Suppose that $u\in C^1([0,T],C^{2,1}(M,\mathbb{R}^d))$, $\rho_0\in C^{1,1}(M)$, and that either  $\barr{\rho}_0\geq \barr{\rho}_{min}>0$ or that $U$ admits a right third derivative at $0$, i.e.\ $|U'''_+(0)|<\infty$.
Suppose in addition that $\flot_N : [0, T ] \rightarrow \mathbb{X}_N$ is a $C^1$ curve which satisfies \eqref{eq:disceulertime} for all times in $[0, T]$, with initial conditions  $\flot_N(0) = \mathrm{Id}_N$ and $\dot{\flot}_N(0) = u(0,\mathrm{Id}_N(\cdot))$. Then, denoting by $\flot$ the  flow associated with $\barr{u}$ satisfying $\flot(0) = \mathrm{Id}|_{S_0}$,
\begin{equation}\label{eq:esteuler}
\sup_{t\in[0,T]} \|\dot{\flot}_N(t) - u(t,\flot_N(t))\|_{\mathbb{X}}^2 +  \| \flot_N(t) - \flot(t) \|_{\mathbb{X}}^2 \leq C( \frac{h_N^2}{\varepsilon} +  h_N +  \varepsilon +  \frac{\tau}{\varepsilon}) \,,
\end{equation}
where $C>0$ depends only on $\sup_{t\in[0,T]} (\|u(t)\|_{C^{2,1}} + \|\partial_t u(t)\|_{C^{2,1}}) $,  $\|\rho_0\|_{C^{1,1}}$, and on $U$, $T$ and $d$. 
\end{theorem}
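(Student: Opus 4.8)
The plan is to run a modulated energy (relative entropy) argument comparing the discrete Lagrangian solution $\flot_N$ to the exact flow $\flot$ of $\bar u$, measuring the discrepancy with the quantity
\begin{equation}\label{eq:modE}
\mathcal{H}_N(t) \coloneqq \tfrac12 \|\dot{\flot}_N(t) - \bar u(t,\flot_N(t))\|_{\mathbb{X}}^2 + \Big[ \mathcal{F}_\varepsilon(\flot_N(t)) - \mathcal{F}(\rho(t)) - \langle U'(\rho(t)), \rho_N^\varepsilon(t) - \rho(t)\rangle \Big],
\end{equation}
where $\rho_N^\varepsilon(t) = \flot_{N\#}^\varepsilon \rho_0$ is the regularized density (using Proposition~\ref{prop:gradient} to identify $\flot_N^\varepsilon$ and $\nabla_\mathbb{X}\mathcal{F}_\varepsilon$). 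The bracketed term is the Bregman divergence of the \emph{convex} Eulerian functional $\rho\mapsto\int_M U(\rho)$, which is why passing to Eulerian variables is essential: in Lagrangian variables $\mathcal{F}$ is not convex, but its Bregman divergence in the density variable controls $\|\rho_N^\varepsilon(t)-\rho(t)\|$ in a suitable (weak) norm and, by superlinearity/strict convexity of $U$, is nonnegative. One should first check a coercivity/equivalence statement: up to the error terms on the right of \eqref{eq:esteuler}, $\mathcal{H}_N(t)$ bounds both $\|\dot{\flot}_N - \bar u(\cdot,\flot_N)\|_{\mathbb{X}}^2$ and $\|\flot_N - \flot\|_{\mathbb{X}}^2$; the latter uses a Lagrangian-to-Eulerian comparison together with Lipschitz bounds on $\bar u$ via a Gr\"onwall-type step.

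Next I would differentiate $\mathcal{H}_N(t)$ in time on each interval $[t_n,t_{n+1})$, using the discrete equation \eqref{eq:disceulertime} $\ddot{\flot}_N = -(\flot_N - P_{\mathbb{X}_N}\flot_N^\varepsilon(t_n))/\varepsilon$ and the strong Euler equations satisfied by $(\bar\rho,\bar u)$ in the form of the Lagrangian Newton law $\ddot\flot = -\nabla_\mathbb{X}\mathcal{F}(\flot)$ (Remark~\ref{Re:caclulgradientformel}). The kinetic part produces a term $\langle \ddot{\flot}_N - \tfrac{d}{dt}[\bar u(\cdot,\flot_N)],\ \dot{\flot}_N - \bar u(\cdot,\flot_N)\rangle$; the chain rule on $\tfrac{d}{dt}\bar u(t,\flot_N(t)) = \partial_t\bar u + (\dot{\flot}_N\cdot\nabla)\bar u$ generates, after subtracting and adding $(\bar u\cdot\nabla)\bar u$, a quadratic remainder controlled by $\|\nabla\bar u\|_\infty\,\mathcal{H}_N$ plus consistency errors, while $(\partial_t\bar u + (\bar u\cdot\nabla)\bar u)\circ\flot_N = -(\nabla U'(\bar\rho))\circ\flot_N$ from the momentum equation couples with the potential part. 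Differentiating the Bregman term uses $\tfrac{d}{dt}\mathcal{F}_\varepsilon(\flot_N) = \langle \nabla_\mathbb{X}\mathcal{F}_\varepsilon(\flot_N),\dot{\flot}_N\rangle$, the continuity equation for $\tfrac{d}{dt}\mathcal{F}(\rho(t))$, and the convexity inequality for $U$ to handle $\tfrac{d}{dt}\langle U'(\rho),\rho_N^\varepsilon-\rho\rangle$; the key algebraic cancellation is that the "optimal transport" first-order condition from \eqref{eq:xneps} lets the cross terms $\langle (\flot_N - P_{\mathbb{X}_N}\flot_N^\varepsilon)/\varepsilon,\ \cdot\rangle$ recombine with $\langle \nabla U'(\bar\rho)\circ\flot_N,\ \cdot\rangle$ into a Bregman divergence of $U$ (nonnegative, hence discardable) plus lower-order terms.

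After these cancellations the differential inequality should read, on each time step,
\begin{equation}\label{eq:gronw}
\tfrac{d}{dt}\mathcal{H}_N(t) \le C_1\,\mathcal{H}_N(t) + C_2\Big(\tfrac{h_N^2}{\varepsilon} + h_N + \varepsilon\Big) + (\text{jump at }t_n),
\end{equation}
where $C_1$ depends on the stated norms of $\bar u$ and $\rho_0$; the jump contributions across the points $t_n$ come from replacing $\flot_N^\varepsilon(t)$ by the frozen $\flot_N^\varepsilon(t_n)$, and by the energy-stability estimate \eqref{eq:energystab} together with a bound on $\|\flot_N(t_{n+1})-\flot_N(t_n)\|_\mathbb{X}$ (from the a priori energy bound, which gives $\|\dot{\flot}_N\|_\mathbb{X}$ bounded) they sum to a term of size $C\,\tau/\varepsilon$ over $[0,T]$. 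The consistency terms are where the parameters enter: $\varepsilon$ from $\mathcal{F}_\varepsilon\le\mathcal{F}$ on the regularized density versus the smooth one; $h_N$ and $h_N^2/\varepsilon$ from the piecewise-constant projection $P_{\mathbb{X}_N}$ and the discretization of $\mathrm{Id}$, estimated using $\rho_0\in C^{1,1}$, $\bar u\in C^{2,1}$ and $h_N\lesssim N^{-d}$, together with the regularity hypothesis on $U$ (either $\bar\rho_0$ bounded below or $|U'''_+(0)|<\infty$) to control $\nabla U'(\bar\rho)$ and second-order Taylor remainders of $U$ uniformly. Concluding is then a Gr\"onwall iteration over the $N_T$ time steps, absorbing the accumulated jumps; since $\mathcal{H}_N(0)$ is itself $O(h_N + h_N^2/\varepsilon)$ by the choice of initial data, \eqref{eq:esteuler} follows with $C = C(\sup_t(\|u(t)\|_{C^{2,1}}+\|\partial_t u(t)\|_{C^{2,1}}),\|\rho_0\|_{C^{1,1}},U,T,d)$.

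The main obstacle is the third step's bookkeeping of the potential/Bregman derivative: one must show that the non-sign-definite cross terms arising from the mismatch between the Lagrangian position $\flot_N$, the transported position $P_{\mathbb{X}_N}\flot_N^\varepsilon$, and the exact flow $\flot$ assemble, modulo the advertised error sizes, into (i) the nonnegative Bregman divergence of $U$ evaluated between $\rho_N^\varepsilon(t)$ and $\rho(t)$, which is thrown away, and (ii) a multiple of $\mathcal{H}_N$ itself. This requires using the first-order optimality (Kantorovich potential) characterization of $\flot_N^\varepsilon$ from \eqref{eq:xneps}, the identity $P(r)=rU'(r)-U(r)$, and a careful Taylor expansion of $U$ and $U'$ around $\bar\rho$ whose remainder is where the regularity assumption on $U$ at $0$ (or $\bar\rho_0\ge\bar\rho_{min}$) is genuinely needed; handling this uniformly when $\bar\rho$ can vanish, as in the polytropic/porous-medium case, is the delicate point. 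The gradient-flow analogue (Theorem~\ref{th:convsemigradient}) follows the same scheme with the kinetic term replaced by a dissipation term, and is technically lighter.
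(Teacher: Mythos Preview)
Your overall strategy---a modulated/relative energy combining the relative kinetic energy with a Bregman divergence of the Eulerian internal energy, differentiated on each subinterval and closed by Gr\"onwall---is exactly the paper's. But there is a genuine gap in what you expect to happen when you differentiate the Bregman term.

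You claim that the cross terms ``recombine \dots into a Bregman divergence of $U$ (nonnegative, hence discardable)''. This is not what occurs. When you differentiate the potential piece and use Lemma~\ref{lem:gradientF} (the optimality condition relating $(\flot_N-\flot_N^\varepsilon)/\varepsilon$ to $-\int P(\rho_N^\varepsilon)\,\mathrm{div}\,u$), together with the continuity equation for $\rho$, what appears is the \emph{relative pressure}
\[
-\int_M P(\rho_N^\varepsilon \mid \rho)\,\mathrm{div}\,u\,\ed x,
\qquad P(r\mid s)=P(r)-P(s)-P'(s)(r-s),
\]
and this has no sign because $\mathrm{div}\,u$ has no sign. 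It cannot be discarded. The only way to absorb it into a multiple of the modulated energy is via Lemma~\ref{lem:Abound}, i.e.\ the structural hypothesis \eqref{eq:pbound} which gives $|P(r\mid s)|\le A\,U(r\mid s)$; this is precisely why \eqref{eq:pbound} is in the statement, and your proposal never invokes it. So the Bregman term $\mathcal{U}(\rho_N^\varepsilon\mid\rho)$ must be \emph{kept} in the modulated energy and used on the right-hand side, not thrown away.

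A few further points where your sketch diverges from what is actually needed:
\begin{itemize}
\item The particles $\flot_N^i(t)$ may leave $M$, so $\bar u(t,\flot_N(t))$ and $U'(\bar\rho(t,\flot_N(t)))$ are not a priori defined. The paper first extends $(\rho,u)$ to all of $\mathbb{R}^d$ preserving the continuity equation and the required regularity (Lemma~\ref{lem:continuityextension}); this is also where the $C^{2,1}$ hypothesis and the alternative $\rho_0\ge\rho_{\min}$ versus $|U'''_+(0)|<\infty$ enter.
\item Your $\mathcal{H}_N$ does not control $\|\flot_N-\flot\|_{\mathbb{X}}^2$ directly. The paper simply adds $\tfrac12\|\flot_N-\flot\|^2$ to the modulated energy and carries it through Gr\"onwall; this also provides the ``anchor'' needed to estimate terms like $D_t\tilde u(\flot_N)-D_t\tilde u(\flot)$ and $\nabla U'(\tilde\rho)(\flot_N)-\nabla U'(\tilde\rho)(\flot)$.
\item In the time-discrete scheme the regularized density is frozen at $t_n$, so the modulated potential involves $\rho_N^\varepsilon(t_n)$, not $\rho_N^\varepsilon(t)$. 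The paper handles the resulting mismatch by introducing an auxiliary term $H^n(t)=\int U'(\tilde\rho(t))\,\ed(\rho_N^\varepsilon(t_n)-\rho_N(t))$, estimated by $W_2(\rho_N(t),\rho_N^\varepsilon(t_n))$ and hence by $\varepsilon+\tfrac{1}{2}\mathcal{E}_{\rho,u}$, and uses the minimality of $\flot_N^\varepsilon(t_{n+1})$ to pass across the jump (this is the content of \eqref{eq:fplush}). Your ``jump at $t_n$'' bookkeeping is the right idea but needs this ingredient to close.
\end{itemize}
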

For what concerns the discretization of dissipative problems of the type \eqref{eq:porous}, several Lagrangian discretizations based on their gradient flow structure \eqref{eq:gradient} have already been developed (see, e.g., the method in \cite{carrillo2019blob} which is close to SPH methods, or in general the review \cite{carrillo2020lagrangian} and references therein). The discretization we consider here has been studied in \cite{leclerc2020lagrangian} (in the time-continuous setting), where the Authors considered more general energies than those we treat here, modelling for example congestion phenomena, and proved the convergence of the discrete measures \eqref{eq:densityN} to solutions of the associated PDE in dimesion one. The result requires an a priori estimate on the regularized flow $X_N^\varepsilon$ which is not proven in higher dimensions. Here we circumvent this issue using the same arguments as in Theorem \ref{th:convsemieuler}, and in particular by a careful choice of a modulated energy and by exploiting the smoothness of the continuous solutions. The convergence estimate we obtain for the discretization of problem \eqref{eq:porous} is the following:

\begin{theorem}\label{th:convsemigradient}
Suppose that $\barr{\rho}:[0,T]\times M \rightarrow [0,\infty)$ is a strong solution to \eqref{eq:gradient} such that $\nabla U'(\rho) \cdot n_{\partial M} = 0$ on $[0,T]\times \partial M$, with $U:[0,\infty) \rightarrow \mathbb{R}$ being a smooth strictly convex and superlinear function such that \eqref{eq:pbound} holds. Suppose that $u\coloneqq-\nabla U'(\rho)$ is of class $C^{2,1}$ in space, uniformly in time, $\rho_0\in C^{1,1}(M)$, and that either  $\barr{\rho}_0\geq \barr{\rho}_{min}>0$ or that $U$ admits a right third derivative at $0$, i.e.\ $|U'''_+(0)|<\infty$.
Suppose in addition that $\flot_N : [0, T ] \rightarrow \mathbb{X}_N$ is a $C^0$ curve which  satisfies \eqref{eq:discgradienttime} for all times in $[0, T]$ with initial conditions $\flot_N(0) = \mathrm{Id}_N$. Then, denoting by $\flot$ the flow associated with $\barr{u}$ satisfying $\flot(0) = \mathrm{Id}|_{S_0}$,
\begin{equation}\label{eq:estgradient}
\sup_{t\in[0,T]}  \int_0^t  \|\dot{\flot}_N(s) - u(s,\flot_N(s))\|_{\mathbb{X}}^2\,\ed s +  \| \flot_N(t) - \flot(t) \|_{\mathbb{X}}^2
\leq C( \frac{h_N^2}{\varepsilon} +  h_N +  \varepsilon +  \frac{\tau}{\varepsilon})\,,
\end{equation}
where $C>0$ depends only on $\sup_{t\in[0,T]} \|\nabla U'(\rho(t))\|_{C^{2,1}}$,  $\|\rho_0\|_{C^{1,1}}$,  and on $U$, $T$ and $d$. 
\end{theorem}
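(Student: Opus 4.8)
The plan is to run a modulated-energy argument parallel to the one used for Theorem~\ref{th:convsemieuler}, with the kinetic term discarded and Hamiltonian conservation replaced by the dissipative structure of \eqref{eq:gradient}. Write $X(t)$ for the Lagrangian flow of the strong solution, so that $\dot X(t)=u(t,\cdot)\circ X(t)$, $\rho(t)=X(t)_\#\rho_0$ and $u=-\nabla U'(\rho)$; and for $t\in[t_n,t_{n+1})$ set $\rho_N^\varepsilon(t)\coloneqq(X_N^\varepsilon(t_n))_\#\rho_0$ and $u_N^\varepsilon(t)\coloneqq-\nabla U'(\rho_N^\varepsilon(t))$. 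By the characterization of $\nabla_{\mathbb{X}}\mathcal{F}_\varepsilon$ in Proposition~\ref{prop:gradient}, the optimality of $X_N^\varepsilon(t_n)$ gives $(X_N(t_n)-X_N^\varepsilon(t_n))/\varepsilon=\nabla U'(\rho_N^\varepsilon(t))\circ X_N^\varepsilon(t_n)=-u_N^\varepsilon(t)\circ X_N^\varepsilon(t_n)$, so \eqref{eq:discgradienttime} reads
\[
\dot X_N(t)=P_{\mathbb{X}_N}\bigl(u_N^\varepsilon(t)\circ X_N^\varepsilon(t_n)\bigr)-\tfrac{1}{\varepsilon}\bigl(X_N(t)-X_N(t_n)\bigr),
\]
the last summand being a time-step error. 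The modulated energy I would use is $\mathcal{D}_N(t)\coloneqq\tfrac12\|X_N(t)-X(t)\|_{\mathbb{X}}^2+\mathcal{H}(\rho_N^\varepsilon(t)\mid\rho(t))$ with $\mathcal{H}(\mu\mid\nu)\coloneqq\int_M\bigl(U(\mu)-U(\nu)-U'(\nu)(\mu-\nu)\bigr)\ud x$, which is nonnegative by convexity of $U$; this relative-internal-energy term is the device through which the Eulerian convexity --- lost in Lagrangian variables --- is exploited, and under the hypothesis $\rho_0\geq\rho_{min}>0$ or $|U'''_+(0)|<\infty$, which prevents $U''$ from degenerating near vacuum, $\mathcal{H}(\rho_N^\varepsilon\mid\rho)$ also dominates a squared distance between $\rho_N^\varepsilon$ and $\rho$. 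Constructing this modulated energy correctly is, as in the paper's philosophy, the central issue.

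The goal is a differential inequality $\tfrac{\ud}{\ud t}\mathcal{D}_N(t)+c\,\|\dot X_N(t)-u(t,X_N(t))\|_{\mathbb{X}}^2\le C\,\mathcal{D}_N(t)+(\text{discretization errors})$ on each interval $(t_n,t_{n+1})$, together with a jump estimate $\mathcal{D}_N(t_n^+)\le\mathcal{D}_N(t_n^-)+(\text{error})$ at the grid points, from which \eqref{eq:estgradient} follows by summation and Gr\"onwall (carrying $\int_0^t\|\dot X_N-u\circ X_N\|^2$ along, since it has the good sign), using $\mathcal{D}_N(0)\le C(h_N^2/\varepsilon+h_N+\varepsilon)$, which holds because $X_N(0)=\mathrm{Id}_N=P_{\mathbb{X}_N}\mathrm{Id}|_{S_0}$ is $O(h_N)$-close in $\mathbb{X}$ to $\mathrm{Id}|_{S_0}$ and $\mathcal{H}(\rho_N^\varepsilon(0)\mid\rho_0)\le\mathcal{F}_\varepsilon(\mathrm{Id}_N)-\mathcal{F}(\rho_0)+(\text{lower order})\le\tfrac{1}{2\varepsilon}\|\mathrm{Id}_N-\mathrm{Id}|_{S_0}\|^2+(\text{lower order})$. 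To obtain the inequality I would compute $\tfrac{\ud}{\ud t}\tfrac12\|X_N-X\|^2=\langle\dot X_N-u\circ X_N,\,X_N-X\rangle+\langle u\circ X_N-u\circ X,\,X_N-X\rangle$, bound the second term by $\|u\|_{C^{0,1}}\|X_N-X\|^2\le C\mathcal{D}_N$, and decompose $\dot X_N-u\circ X_N=(P_{\mathbb{X}_N}-\mathrm{Id})(u_N^\varepsilon\circ X_N^\varepsilon)+(u_N^\varepsilon-u)\circ X_N^\varepsilon+(u\circ X_N^\varepsilon-u\circ X_N)+(\text{time-step error})$; pairing against $X_N-X=(X_N-X_N^\varepsilon)+(X_N^\varepsilon-X)$, the projection defect is handled by orthogonality of $P_{\mathbb{X}_N}$ together with $\|X-P_{\mathbb{X}_N}X\|\lesssim h_N$, $\|X_N^\varepsilon-P_{\mathbb{X}_N}X_N^\varepsilon\|\lesssim h_N$ (it carries a factor $1/\varepsilon$, whence $h_N^2/\varepsilon$), the term $u\circ X_N^\varepsilon-u\circ X_N$ is $\le\|u\|_{C^{0,1}}\|X_N^\varepsilon-X_N\|\lesssim\sqrt\varepsilon$ by the a priori bound $\|X_N-X_N^\varepsilon\|_{\mathbb{X}}^2/(2\varepsilon)\le\mathcal{F}_\varepsilon(\mathrm{Id}_N)-\inf\mathcal{F}\lesssim1$, and $\langle(u_N^\varepsilon-u)\circ X_N^\varepsilon,\,X_N-X_N^\varepsilon\rangle=-\varepsilon\|u_N^\varepsilon\circ X_N^\varepsilon\|^2+\varepsilon\langle u\circ X_N^\varepsilon,u_N^\varepsilon\circ X_N^\varepsilon\rangle\le\tfrac{\varepsilon}{2}\|u\|_\infty^2$ is $O(\varepsilon)$.

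The crucial term is $\langle(u_N^\varepsilon-u)\circ X_N^\varepsilon,\,X_N^\varepsilon-X\rangle$, which is where convexity enters: writing it as $\langle u_N^\varepsilon\circ X_N^\varepsilon,X_N^\varepsilon-X\rangle-\langle u\circ X_N^\varepsilon,X_N^\varepsilon-X\rangle$, the first piece equals $\langle\nabla U'(\rho_N^\varepsilon)\circ X_N^\varepsilon,\,X-X_N^\varepsilon\rangle$ which, by the displacement convexity of the internal energy $\mu\mapsto\int_M U(\mu)\ud x$ along the generalized geodesic from $\rho_N^\varepsilon$ to $\rho$ associated with the coupling $(X_N^\varepsilon,X)_\#\rho_0$, is $\le\mathcal{F}(\rho)-\mathcal{F}(\rho_N^\varepsilon)$; the second piece, by a Taylor expansion of the $C^{2,1}$ function $U'(\rho)$ around $X_N^\varepsilon$, is $\le\int_M U'(\rho)(\rho_N^\varepsilon-\rho)\ud x+C(\|u\|_{C^{2,1}})\|X-X_N^\varepsilon\|^2$; combining, $\langle(u_N^\varepsilon-u)\circ X_N^\varepsilon,\,X_N^\varepsilon-X\rangle\le-\mathcal{H}(\rho_N^\varepsilon\mid\rho)+C\mathcal{D}_N+C\varepsilon$, and the nonpositive term $-\mathcal{H}(\rho_N^\varepsilon\mid\rho)$ either cancels the growth of the relative-internal-energy part of $\mathcal{D}_N$ or, via the coercivity of $\mathcal{H}$, produces the dissipation --- the bookkeeping being arranged so that, together with the relative-entropy dissipation recovered at the grid points through the stability inequality \eqref{eq:energystabgrad}, the combination $c\,\|\dot X_N-u\circ X_N\|^2$ is left on the favourable side (this is why that quantity, integrated in time, appears in \eqref{eq:estgradient}). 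The time-step error contributes $O(\tau/\varepsilon)$, using $\|\dot X_N\|=\|X_N-P_{\mathbb{X}_N}X_N^\varepsilon\|/\varepsilon\lesssim1/\sqrt\varepsilon$ and the fact that the scheme keeps $\|\dot X_N\|$ nonincreasing within each step, again together with \eqref{eq:energystabgrad} for the jumps of $\mathcal{H}(\rho_N^\varepsilon\mid\rho)$; and using $P_{\mathbb{X}_N}X_N^\varepsilon$ in place of $X_N^\varepsilon$ in the scheme accounts for a further $O(h_N^2/\varepsilon)$.

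I expect the main obstacle to be the coupling between the convexity step and the time-stepping: the displacement-convexity inequality above is the Lagrangian/Moreau--Yosida analogue of a weak--strong stability estimate for the porous-medium equation, but it has to be run in a setting where $\rho_N^\varepsilon$ is only the push-forward of the (possibly non-smooth) Moreau--Yosida transport map --- so $\rho_N^\varepsilon$ must be kept inside the integrals only through the explicit, $\varepsilon$-controlled combination $\nabla U'(\rho_N^\varepsilon)\circ X_N^\varepsilon=(X_N-X_N^\varepsilon)/\varepsilon$, never differentiating it --- and where, because $\rho_N^\varepsilon$ is frozen over each time step, it is not even an approximate solution there, so the genuine dissipation materialises only at the grid points and must be captured through \eqref{eq:energystabgrad}. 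Carrying out this bookkeeping so that both $\|X_N(t)-X(t)\|^2$ and $\int_0^t\|\dot X_N(s)-u(s,X_N(s))\|^2\ud s$ end up controlled by $C(h_N^2/\varepsilon+h_N+\varepsilon+\tau/\varepsilon)$, and handling the behaviour near vacuum --- where $U'$, $U''$ and the coercivity of $\mathcal{H}$ must all be controlled, which is exactly what forces the hypothesis $\rho_0\geq\rho_{min}>0$ or $|U'''_+(0)|<\infty$ --- is the technical heart of the argument, just as in the proof of Theorem~\ref{th:convsemieuler}.
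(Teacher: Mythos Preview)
Your strategy departs from the paper's in one essential way, and I believe this creates a genuine gap: you drop the Moreau--Yosida term $\frac{\|X_N(t)-X_N^\varepsilon(t_n)\|_{\mathbb{X}}^2}{2\varepsilon}$ from the modulated energy. In the paper's proof this term is not decorative; it is the \emph{source} of the velocity dissipation. The paper's relative energy is
\[
\mathcal{Z}_\rho(t,X_N)=\frac{\|X_N(t)-X_N^\varepsilon(t_n)\|^2}{2\varepsilon}+\mathcal{U}(\rho_N^\varepsilon(t_n)\mid\rho(t))+\tfrac{1}{2}\|X_N(t)-X(t)\|^2 .
\]
Differentiating the first summand gives $\langle\varepsilon^{-1}(X_N-X_N^\varepsilon),\dot X_N\rangle$; using the scheme $\dot X_N=-\varepsilon^{-1}(X_N-P_{\mathbb{X}_N}X_N^\varepsilon)$ this becomes $-\langle\dot X_N,\dot X_N\rangle$ up to projection and time-step errors, and after adding and subtracting $\tilde u\circ X_N$ one obtains exactly $-2\mathcal{K}_{\tilde u}(t,X_N)=-\|\dot X_N-\tilde u\circ X_N\|^2$ on the right-hand side, together with cross terms that are linear in $\dot X_N-\tilde u\circ X_N$ and are absorbed at the price of $\tfrac12\mathcal{K}_{\tilde u}$. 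This is how the quantity $\int_0^t\|\dot X_N-u\circ X_N\|^2$ ends up on the favourable side in \eqref{eq:estgradient}. Your $\mathcal{D}_N$ contains no term whose time derivative produces $-\|\dot X_N\|^2$; the only place $\dot X_N$ enters your computation is the cross term $\langle\dot X_N-u\circ X_N,\,X_N-X\rangle$, which has no sign. You assert that the $-\mathcal{H}(\rho_N^\varepsilon\mid\rho)$ coming from the displacement-convexity step ``produces the dissipation'', but $\mathcal{H}$ is a functional of densities, not of $\dot X_N$; there is no coercivity of $\mathcal{H}$ that bounds $\|\dot X_N-u\circ X_N\|^2$ from below, and the stability inequality \eqref{eq:energystabgrad} controls the decrease of $\mathcal{F}_\varepsilon$, again not $\|\dot X_N-u\circ X_N\|^2$.

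A second, related point: once the Moreau--Yosida term is in the energy, the paper never needs displacement convexity. The derivative of $\frac{\|X_N-X_N^\varepsilon\|^2}{2\varepsilon}$, paired against the smooth field $u(t,\cdot)\circ X_N^\varepsilon$, is converted via Lemma~\ref{lem:gradientF} into $-\int_M P(\rho_N^\varepsilon)\,\mathrm{div}\,u$; combined with the derivatives of $\mathcal{U}(\rho)$ and of the linear correction $H^n(t)=\int U'(\tilde\rho)\,\mathrm{d}(\rho_N^\varepsilon-\rho_N)$ (which you also omit, but which is needed to close the estimate across time steps), this assembles into the relative pressure $-\int_M P(\rho_N^\varepsilon\mid\rho)\,\mathrm{div}\,u$, and Lemma~\ref{lem:Abound} then gives the bound by $A\,\mathcal{U}(\rho_N^\varepsilon\mid\rho)$ using only hypothesis~\eqref{eq:pbound}. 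Your route replaces this mechanism by the inequality $\langle\nabla U'(\rho_N^\varepsilon)\circ X_N^\varepsilon,\,X-X_N^\varepsilon\rangle\le\mathcal{U}(\rho)-\mathcal{U}(\rho_N^\varepsilon)$, which is convexity of $\mathcal{U}$ along the generalized interpolation $((1-s)X_N^\varepsilon+sX)_\#\rho_0$. That requires McCann's condition on $U$, which is \emph{not} among the hypotheses of the theorem (strict convexity, superlinearity and \eqref{eq:pbound} do not imply it), so even on the terms where your outline is internally consistent you would be proving a weaker statement.
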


\begin{remark} 
The modulated energy we use to prove the estimates above has an additional term, if one compares it to the left-hand sides of equations \eqref{eq:esteuler} and \eqref{eq:estgradient}, which is associated with the internal energy $\mc{F}$ and which is omitted in order to simplify the statements. This term is discussed in detail in Section \ref{sec:modulated} and actually implies a stronger control on the reconstructed density associated with the regularized flow $\flot_N^\varepsilon$.
\end{remark}

\section{Moreau-Yosida regularization}

In this section we collect some properties of the regularized energy in \eqref{eq:my}. We provide an equivalent Eulerian formulation of such an energy using the $L^2$-Wasserstein distance on the space of positive measures of fixed mass, and we also give a characterization of its gradient in terms of the pressure, which will be useful to prove our convergence results. 

We start by introducing the Eulerian counterpart to the internal energy functional in \eqref{eq:secondorder},
which we obtain by regarding this as a function of the density rather than the Lagrangian flow map. More precisely, denoting by $\mc{M}_+(\mathbb{R}^d)$ the set of positive finite measures on $\mathbb{R}^d$, we define  $\mathcal{U}:\mc{M}_+(\mathbb{R}^d) \rightarrow \mathbb{R}$ as follows:
\begin{equation}\label{eq:potenergy}
\mathcal{U}(\rho)\coloneqq  \left \{
\begin{array}{ll}
 \int_M U(\rho) \, \ed x & \text{if } \rho\ll \ed x  \mres M,\\
 +\infty & \text{otherwise.}
\end{array}\right.
\end{equation}
Then, the functional $\mathcal{F}:\mathbb{X} \rightarrow \mathbb{R}$ in \eqref{eq:secondorder} can be equivalently defined by 
\[\mathcal{F}(\flot) \coloneqq \mathcal{U}(\flot_\# \rho_0).\]

We define  $\mathcal{U}_\varepsilon (\rho):\mc{M}_+(\mathbb{R}^d) \rightarrow \mathbb{R}$ as the Moreau-Yosida regularization of $\mathcal{U}$ with respect to the $L^2$-Wasserstein distance, i.e.\
\begin{equation}\label{eq:mye}
\mathcal{U}_\varepsilon (\rho) \coloneqq \min_{\mu \in\mc{M}_+(\mathbb{R}^d)}\frac{W^2_2(\rho,\mu)}{2\varepsilon} + \mathcal{U}(\mu)\,.
\end{equation}
The quantity $W_2(\rho,\mu)$ is the $L^2$-Wasserstein distance between $\rho$ and $\mu$ (see, e.g., Chapter 5 in \cite{santambrogio2015optimal}), and it can be defined via the following minimization problem:
\[
W_2^2(\rho,\mu)\coloneqq \min_{\substack{\gamma \in \Pi(\rho,\mu)}} \int {|x-y|^2}\,\ed \gamma(x,y) \,,
\]
where $\Pi(\rho,\mu)$ is the set of positive measures on $\mathbb{R}^d\times \mathbb{R}^d$ with marginals $\rho$ and $\mu$, and we set $W^2_2(\rho,\mu)= +\infty$ whenever $\rho$ and $\mu$ have different total mass.
Since $U$ is strictly convex and superlinear, for any $\rho \in\mc{M}_+(\mathbb{R}^d)$ (with finite second moment) the function minimized in problem \eqref{eq:mye} is lower semi-continuous with respect to the Wasserstein metric (see, e.g., Proposition 7.7 in \cite{santambrogio2015optimal}) and therefore it admits a unique minimizer which we denote $\rho^\varepsilon$.
The link between the Eulerian \eqref{eq:mye} and Lagrangian form \eqref{eq:my} of the regularized energy is established in the following lemma.

\begin{lemma}\label{lem:eulerianmy}
Let $\flot_N \in \mathbb{X}_N$ and $\rho_N = (\flot_N)_\# \rho_0$, with $\rho_0 \in \mc{M}_+(\mathbb{R}^d)$ such that $\rho_0\ll \ed x \mres M$. Then, $\mathcal{F}_\varepsilon(\flot_N) = \mathcal{U}_\varepsilon(\rho_N)$. In particular, there exists a convex function $\psi : \mathbb{R}^d \rightarrow  \mathbb{R}$, whose gradient is uniquely defined, such
that $\flot_N^\varepsilon$  is a minimizer associated with $\flot_N$ in problem \eqref{eq:my}, i.e.
\[ 
{\flot}^\varepsilon_N \in \underset{\sigma \in \mathbb{X}}{\mathrm{argmin}} \frac{\|\flot_N-\sigma
\|^2_{\mathbb{X}}}{2\varepsilon}  + \mathcal{F}(\sigma)\,,\] 
if and only if $\flot_N = \nabla \psi \circ \flot_N^\varepsilon$ up to a negligible set.  Moreover, let 
\[
{\rho}^\varepsilon_N \coloneqq \underset{\mu \in\mc{M}_+(\mathbb{R}^d)}{ \mathrm{argmin}}  \frac{W^2_2(\rho_N,\mu)}{2\varepsilon} + \mathcal{U}(\mu)\,.
\]
Then, ${\rho}^\varepsilon_N = ({\flot}^\varepsilon_N )_\# \rho_0$.
\end{lemma}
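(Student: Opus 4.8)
The plan is to match the Lagrangian Moreau--Yosida energy \eqref{eq:my} with the Eulerian one \eqref{eq:mye} by proving the two inequalities $\mathcal{F}_\varepsilon(\flot_N)\ge\mathcal{U}_\varepsilon(\rho_N)$ and $\mathcal{F}_\varepsilon(\flot_N)\le\mathcal{U}_\varepsilon(\rho_N)$ separately, and then to read off the description of the minimisers from the equality cases. For the first inequality, take any competitor $\sigma\in\mathbb{X}$ in \eqref{eq:my}: the map $\omega\mapsto(\flot_N(\omega),\sigma(\omega))$ pushes $\rho_0$ forward to a coupling of $\rho_N=(\flot_N)_\#\rho_0$ and $\sigma_\#\rho_0$, so $\|\flot_N-\sigma\|_{\mathbb{X}}^2\ge W_2^2(\rho_N,\sigma_\#\rho_0)$; since $\mathcal{F}(\sigma)=\mathcal{U}(\sigma_\#\rho_0)$, taking the infimum over $\sigma$ yields $\mathcal{F}_\varepsilon(\flot_N)\ge\mathcal{U}_\varepsilon(\rho_N)$.

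For the reverse inequality and the construction of $\psi$, let $\rho_N^\varepsilon$ be the unique minimiser in \eqref{eq:mye} associated with $\rho_N$; it has finite second moment, and $\mathcal{U}(\rho_N^\varepsilon)<\infty$ forces $\rho_N^\varepsilon\ll\ed x\mres M$. Applying Brenier's theorem to the optimal transport from the absolutely continuous measure $\rho_N^\varepsilon$ to the finitely supported measure $\rho_N$, we obtain a convex function $\psi:\mathbb{R}^d\to\mathbb{R}$ (concretely of the form $\psi(y)=\max_i(\langle y,\flot_N^i\rangle-w_i)$), whose gradient is therefore defined Lebesgue-a.e., hence $\rho_N^\varepsilon$-a.e., independently of the representative of $\psi$, and which satisfies $\nabla\psi_\#\rho_N^\varepsilon=\rho_N$ with $(\nabla\psi,\mathrm{id})_\#\rho_N^\varepsilon$ the \emph{unique} optimal plan between $\rho_N$ and $\rho_N^\varepsilon$. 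The Borel sets $C_i\coloneqq\{\nabla\psi=\flot_N^i\}$ carry all the mass of $\rho_N^\varepsilon$ and satisfy $\rho_N^\varepsilon[C_i]=\rho_0[P_i]$; since $\rho_0\mres P_i$ and $\rho_N^\varepsilon\mres C_i$ are non-atomic Borel measures of equal mass, there is a Borel map $\sigma_i:P_i\to C_i$ transporting one onto the other, and gluing the $\sigma_i$ produces $\sigma\in\mathbb{X}$ (it lies in $L^2_{\rho_0}$ because $\sigma_\#\rho_0=\rho_N^\varepsilon$ has finite second moment) with $\sigma_\#\rho_0=\rho_N^\varepsilon$ and $\nabla\psi\circ\sigma=\flot_N$ $\rho_0$-a.e. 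For this $\sigma$ one computes $\|\flot_N-\sigma\|_{\mathbb{X}}^2=\int|\nabla\psi(y)-y|^2\,\ed\rho_N^\varepsilon(y)=W_2^2(\rho_N,\rho_N^\varepsilon)$, whence $\mathcal{F}_\varepsilon(\flot_N)\le W_2^2(\rho_N,\rho_N^\varepsilon)/(2\varepsilon)+\mathcal{U}(\rho_N^\varepsilon)=\mathcal{U}_\varepsilon(\rho_N)$. Combined with the first step this gives $\mathcal{F}_\varepsilon(\flot_N)=\mathcal{U}_\varepsilon(\rho_N)$ and exhibits a minimiser of \eqref{eq:my}.

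It remains to characterise the minimisers. Let $\flot_N^\varepsilon$ be any minimiser of \eqref{eq:my}. Chaining $\mathcal{U}_\varepsilon(\rho_N)=\frac{1}{2\varepsilon}\|\flot_N-\flot_N^\varepsilon\|_{\mathbb{X}}^2+\mathcal{U}((\flot_N^\varepsilon)_\#\rho_0)\ge\frac{1}{2\varepsilon}W_2^2(\rho_N,(\flot_N^\varepsilon)_\#\rho_0)+\mathcal{U}((\flot_N^\varepsilon)_\#\rho_0)\ge\mathcal{U}_\varepsilon(\rho_N)$ forces both inequalities to be equalities: uniqueness of the minimiser in \eqref{eq:mye} gives $(\flot_N^\varepsilon)_\#\rho_0=\rho_N^\varepsilon$ (the ``moreover'' assertion), and the equality $\|\flot_N-\flot_N^\varepsilon\|_{\mathbb{X}}^2=W_2^2(\rho_N,\rho_N^\varepsilon)$ says the coupling $(\flot_N,\flot_N^\varepsilon)_\#\rho_0$ is optimal between $\rho_N$ and $\rho_N^\varepsilon$, hence equals $(\nabla\psi,\mathrm{id})_\#\rho_N^\varepsilon$ by uniqueness of the optimal plan; disintegrating this identity against the second marginal $\rho_N^\varepsilon=(\flot_N^\varepsilon)_\#\rho_0$ forces $\flot_N=\nabla\psi\circ\flot_N^\varepsilon$ $\rho_0$-a.e. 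Conversely, any $\sigma\in\mathbb{X}$ with $\sigma_\#\rho_0=\rho_N^\varepsilon$ and $\flot_N=\nabla\psi\circ\sigma$ $\rho_0$-a.e.\ satisfies $\|\flot_N-\sigma\|_{\mathbb{X}}^2=W_2^2(\rho_N,\rho_N^\varepsilon)$ by the computation above, hence attains $\mathcal{U}_\varepsilon(\rho_N)=\mathcal{F}_\varepsilon(\flot_N)$ and is a minimiser.

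The main obstacle is the lifting in the second paragraph: turning the Eulerian minimiser $\rho_N^\varepsilon$ into a Lagrangian competitor requires selecting, in a Borel-measurable way, maps transporting $\rho_0\mres P_i$ onto $\rho_N^\varepsilon\mres C_i$ cell by cell, which relies on the isomorphism theorem for non-atomic standard measure spaces (equivalently, a measurable selection argument), while the whole characterisation of minimisers hinges on uniqueness of the Brenier map and of the optimal plan, for which the absolute continuity $\rho_N^\varepsilon\ll\ed x$ is essential. Everything else is routine manipulation of the definitions of $W_2$, $\mathcal{U}$, and pushforward.
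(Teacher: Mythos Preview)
Your proof is correct and follows essentially the same route as the paper's: both directions of the inequality $\mathcal{F}_\varepsilon(\flot_N)=\mathcal{U}_\varepsilon(\rho_N)$ are obtained exactly as in the paper (coupling $(\flot_N,\sigma)_\#\rho_0$ for one side, Brenier's map $\nabla\psi$ from $\rho_N^\varepsilon$ to $\rho_N$ together with a cell-by-cell lifting $\sigma_i:P_i\to C_i$ for the other). Your treatment is in fact more complete than the paper's on two points: you justify the existence of the measurable lifts $\sigma_i$ via the isomorphism theorem for non-atomic measures (the paper simply asserts such maps exist), and you actually prove the ``only if'' direction of the characterisation of minimisers by invoking uniqueness of the optimal plan to deduce $\flot_N=\nabla\psi\circ\flot_N^\varepsilon$, whereas the paper only shows $(\flot_N^\varepsilon)_\#\rho_0=\rho_N^\varepsilon$ and leaves that last step implicit.
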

\begin{proof}
Let $\Pi(\rho_N,\mu)$ the set of positive measures on $\mathbb{R}^d\times \mathbb{R}^d$ with marginals $\rho_N = (\flot_N)_\# \rho_0$ and $\mu$. Since $\rho_0$ is a.c., for any $\mu\in\mc{M}_+(\mathbb{R}^d)$ with the same total mass of $\rho_0$, there exists a $\sigma \in \mathbb{X}$ such that $\sigma_\#\rho_0 = \mu$, and we can construct a measure $(\flot_N,\sigma)_\# \rho_0 \in \Pi(\rho_N,\mu)$. This implies that
\begin{equation}\label{eq:kantorovich}
\min_{\substack{\gamma \in \Pi(\rho_N,\mu)}} \int \frac{|x-y|^2}{2\varepsilon} \,\ed \gamma(x,y) + \mathcal{U}(\mu) \leq
 \frac{\|\flot_N-\sigma\|^2_{\mathbb{X}}}{2\varepsilon} + \mathcal{U}(\sigma_\# \rho_0)  \,.
\end{equation}
Therefore, taking the infimum over $\sigma$ on both sides of \eqref{eq:kantorovich} yields $\mathcal{U}_\varepsilon(\rho_N) \leq \mathcal{F}_\varepsilon(\flot_N)$.

To prove the reverse inequality, consider again $\rho_N = (\flot_N)_\# \rho_0 = \sum_i \rho_0[P_i] \delta_{\flot_N^i}$ and let $\rho^\varepsilon_N$ the associated minimizer of problem \eqref{eq:mye}. By Brenier's theorem \cite{brenier1991polar}, there exists a unique transport map given by the gradient of a convex function $\psi$ such that $(\nabla \psi)_\# \rho^\varepsilon_N = \rho_N$ and $W^2_2(\rho_N,\rho^\varepsilon_N)= \int_M | \nabla \psi -\mathrm{Id} |^2 \ed \rho^\varepsilon_N$ . This coincides with the optimal transport map from $\rho^\varepsilon_N$ to $\rho_N$. For any $1\leq i \leq N$, denote $L_i \coloneqq (\nabla \psi)^{-1}(\flot_N^i)$ so that $\rho^\varepsilon_N[L_i] = \rho_0[P_i]$, and let $\sigma_i:P_i\rightarrow L_i$ be any map such that $(\sigma_i)_\# \rho_0|_{P_i} = \rho^\varepsilon_N|_{L_i}$. Then we can take $\flot_N^\varepsilon \in \mathbb{X}$ to be the map defined by $\flot_N^\varepsilon|_{P_i} = \sigma_i$.
Clearly, $\flot_N= \nabla \psi \circ \flot_N^\varepsilon$ by construction and
\[
\mathcal{F}_\varepsilon(\flot_N) \leq \frac{\|\flot_N-\flot_N^\varepsilon \|^2_{\mathbb{X}}}{2\varepsilon} + \mathcal{U}((\flot_N^\varepsilon)_\# \rho_0)   = \int_M \frac{|\nabla \psi -\mathrm{Id} |^2}{2\varepsilon} \ed \rho^\varepsilon_N +\mathcal{U}(\rho^\varepsilon_N)  = \mathcal{U}_\varepsilon(\rho_N)\,.
\]
Therefore, we have the equality $\mathcal{U}_\varepsilon(\rho_N) = \mathcal{F}_\varepsilon(\flot_N)$. Finally, using again equation \eqref{eq:kantorovich} we deduce that if $X_N^\varepsilon$ is any minimizer  $\rho_N^\varepsilon= (X_N^\varepsilon)_\# \rho_0$.
\end{proof}

Using the optimality conditions of the minimization problem \eqref{eq:mye}, one can actually provide an explicit expression for the minimizer $\rho_N^\varepsilon$ corresponding to an empirical measure $\rho_N$.  Such a characterization is proven in Proposition 11 in \cite{sarrazin2020lagrangian}, but we recall the precise statement in Proposition \ref{prop:gradient} below. In particular, this shows that $\rho_N^\varepsilon$ has a continuous bounded density on $M$. In turn, this allows us to prove the following statement which is a slight adaptation of Lemma 6.1 in \cite{evans1997partial}.

\begin{lemma}\label{lem:gradientF}
Let $X_N\in \mathbb{X}_N$ and define $X_N^\varepsilon$ and $\rho_N^\varepsilon$ as in Lemma \ref{lem:eulerianmy}.
For any $v \in C^{1}(M, \mathbb{R}^d)$ with $v\cdot n_{\partial M} = 0$ on $\partial M$, we have
\begin{equation}\label{eq:gradientf}
\int_{S_0} \frac{\flot_N-\flot_N^\varepsilon}{\varepsilon} \cdot v \circ \flot_N^\varepsilon \, \rho_0 \ed x = - \int_{M}  P(\rho^\varepsilon_N) \mathrm{div}\,v\, \ed x \,.
\end{equation}
\end{lemma}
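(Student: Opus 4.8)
\emph{Plan.} I would prove this by an \emph{inner variation} (horizontal perturbation) argument in the spirit of Lemma 6.1 in \cite{evans1997partial}: perturb the minimizer $\flot_N^\varepsilon$ of \eqref{eq:my} by composing it on the left with the flow of $v$, and differentiate the (minimal) value at the base point. Let $(\Phi_s)_s$ be the flow of $v$, i.e.\ $\partial_s\Phi_s = v\circ\Phi_s$ with $\Phi_0 = \mathrm{Id}$. Since $v\in C^1(M,\mathbb{R}^d)$ is tangent to $\partial M$, for $s$ near $0$ the map $\Phi_s$ is a $C^1$ diffeomorphism of $M$ onto itself, jointly $C^1$ in $(s,x)$, with $D_x\Phi_0 = \mathrm{Id}$ and, by Liouville's formula, $\partial_s\det D_x\Phi_s = (\det D_x\Phi_s)\,(\mathrm{div}\,v)\circ\Phi_s$; in particular $\partial_s\det D_x\Phi_s|_{s=0} = \mathrm{div}\,v$. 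Because $\rho_N^\varepsilon = (\flot_N^\varepsilon)_\#\rho_0$ is concentrated on $M$, the map $\flot_N^\varepsilon$ takes values in $M$ up to a $\rho_0$-negligible set, so $\sigma_s\coloneqq\Phi_s\circ\flot_N^\varepsilon$ is a well-defined element of $\mathbb{X}$ (here one uses that $M$ is compact), and $(\sigma_s)_\#\rho_0 = (\Phi_s)_\#\rho_N^\varepsilon$.

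Since $\flot_N^\varepsilon$ realizes the infimum in \eqref{eq:my}, the function
\[
g(s)\coloneqq \frac{\|\flot_N-\sigma_s\|_{\mathbb{X}}^2}{2\varepsilon} + \mathcal{F}(\sigma_s) = \frac{\|\flot_N-\sigma_s\|_{\mathbb{X}}^2}{2\varepsilon} + \mathcal{U}\big((\Phi_s)_\#\rho_N^\varepsilon\big)
\]
satisfies $g(s)\geq g(0)$ for $s$ near $0$, so \eqref{eq:gradientf} will follow once $g$ is shown differentiable at $0$ with $g'(0) = 0$. The kinetic term is differentiable (one differentiates under the integral using that $\flot_N$, $\flot_N^\varepsilon$ and $v\circ\Phi_s$ are bounded on the relevant compact sets), and since $\partial_s(\Phi_s\circ\flot_N^\varepsilon)|_{s=0} = v\circ\flot_N^\varepsilon$,
\[
\frac{d}{ds}\Big|_{s=0}\frac{\|\flot_N-\sigma_s\|_{\mathbb{X}}^2}{2\varepsilon} = -\frac{1}{\varepsilon}\int_{S_0}(\flot_N-\flot_N^\varepsilon)\cdot(v\circ\flot_N^\varepsilon)\,\rho_0\,\ed x\,.
\]
For the internal term, write $r$ for the density of $\rho_N^\varepsilon$ and $J_s\coloneqq\det D_x\Phi_s$; the change of variables $y=\Phi_s(x)$ gives $\mathcal{U}((\Phi_s)_\#\rho_N^\varepsilon) = \int_M U(r/J_s)\,J_s\,\ed x$. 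By Proposition \ref{prop:gradient}, $r$ is bounded on $M$, and since $U$ is smooth and $(s,x)\mapsto J_s(x)$ is $C^1$ with $J_s\to 1$ uniformly as $s\to 0$, the integrand and its $s$-derivative are uniformly bounded for $s$ near $0$; differentiating under the integral sign, using $J_0=1$ and $\partial_sJ_s|_{s=0}=\mathrm{div}\,v$,
\[
\frac{d}{ds}\Big|_{s=0}\mathcal{U}\big((\Phi_s)_\#\rho_N^\varepsilon\big) = \int_M\big(U(r)-rU'(r)\big)\,\mathrm{div}\,v\,\ed x = -\int_M P(\rho_N^\varepsilon)\,\mathrm{div}\,v\,\ed x\,,
\]
by the thermodynamic relation \eqref{eq:thermodynamic}. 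Summing the two contributions and imposing $g'(0)=0$ yields \eqref{eq:gradientf}.

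\emph{Main obstacles.} I expect the two genuinely non-routine points to be: (i) checking that tangency of $v$ to $\partial M$ makes $\Phi_s$ a self-diffeomorphism of $M$, which is the only place the geometry of $M$ enters and which on a merely Lipschitz boundary should be read with the appropriate care; and (ii) the legitimacy of differentiating $s\mapsto\mathcal{U}((\Phi_s)_\#\rho_N^\varepsilon)$ under the integral sign, which is exactly where the boundedness (indeed continuity) of the optimal density $\rho_N^\varepsilon$ from Proposition \ref{prop:gradient} is indispensable — without such regularity $\mathcal{F}$ need not be differentiable along this variation. Everything else is the elementary computation above; one could alternatively phrase the whole argument directly in Eulerian variables, perturbing $\rho_N^\varepsilon$ to $(\Phi_s)_\#\rho_N^\varepsilon$ and bounding the transport cost through the plan induced by the Brenier map of Lemma \ref{lem:eulerianmy}.
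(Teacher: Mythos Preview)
Your proof is correct and follows the same inner-variation strategy as the paper, but you implement it in Lagrangian rather than Eulerian variables, and this buys you a genuine simplification. The paper perturbs the optimal \emph{measure} to $\rho_s=(\Phi_s)_\#\rho_N^\varepsilon$ and works with $g(s)=\frac{1}{2\varepsilon}W_2^2(\rho_N,\rho_s)+\mathcal{U}(\rho_s)$; since the Wasserstein term is not obviously differentiable in $s$ (the optimal coupling moves), the paper bounds it from above using the suboptimal plan $(\nabla\psi,\Phi_s)_\#\rho_N^\varepsilon$, obtains only the one-sided inequality \eqref{eq:ineqpres}, and then replaces $v$ by $-v$ to upgrade it to equality. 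You instead perturb the optimal \emph{map} to $\sigma_s=\Phi_s\circ X_N^\varepsilon$ and work with the $L^2$ functional $\frac{1}{2\varepsilon}\|X_N-\sigma_s\|_{\mathbb X}^2+\mathcal F(\sigma_s)$, whose first term is manifestly smooth in $s$; so you can invoke $g'(0)=0$ directly and skip the inequality/sign-flip step entirely. The internal-energy computation and the use of the boundedness of $\rho_N^\varepsilon$ from Proposition~\ref{prop:gradient} are identical in both arguments. Your closing remark about the Eulerian alternative with the Brenier-map plan is precisely the paper's route.
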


\begin{proof}
We follow the proof of Lemma 6.1 in \cite{evans1997partial} and introduce first the flow of $v$, i.e.\ for $\delta> 0$ we define $Y:(-\delta,\delta)\times M\rightarrow M$ as the solution to the flow equation $\dot{Y}_s = v \circ Y_s $ for $s\in(-\delta,\delta)$ and with $Y_0 =\mathrm{Id}$, the identity map on $M$. Note that $Y_s:M\rightarrow M$ is a $C^{1}$ diffeomorphism, since $v$ is $C^{1}$ and it is tangent to the boundary, and we have
\begin{equation}\label{eq:jacobian}
{\partial_s}\, \mathrm{det}\nabla Y_s = (\mathrm{div}\, v \circ Y_s )\, \mathrm{det}\nabla Y_s\,.
\end{equation}
Then we define $\rho_s \coloneqq (Y_s)_\# \rho_N^\varepsilon$, and identifying $\rho_N^\varepsilon$ with its density with respect to $\ed x \mres M$ we have
\begin{equation}\label{eq:explicitpush}
\rho_s = \frac{\rho_N^\varepsilon}{\mathrm{det}\nabla Y_s} \circ Y_s^{-1}\,,
\end{equation}
which can be directly deduced  via a change variables in the integral formulation of the definition of the pushforward \eqref{eq:density}.
Moreover, the function
\[
g:s\in(-\delta,\delta) \rightarrow \frac{W^2_2(\rho_N,\rho_s)}{2\varepsilon} + \mathcal{U}(\rho_s) \in \mathbb{R}
\]
has a minimum at $s=0$. Since $\rho_N^\varepsilon$ is bounded, using equation \eqref{eq:explicitpush}, \eqref{eq:jacobian}, and the definition of $P$ in \eqref{eq:thermodynamic} we obtain 
\begin{equation}\label{eq:explicitderu}
\frac{\ed}{\ed s} \bigg |_{s=0} \mathcal{U}(\rho_s) = \frac{\ed}{\ed s} \bigg |_{s=0} \int_M U \left(\frac{\rho_N^\varepsilon}{\mathrm{det}\nabla Y_s}\right) \mathrm{det}\nabla Y_s \,\ed x = - \int_M P(\rho_N^\varepsilon) \mathrm{div}\,v \, \ed x\,.
\end{equation}
We now introduce $\gamma_s = (\nabla \psi ,Y_s)_\#\rho_N^\varepsilon$, so that $W^2_2(\rho_N,\rho_s) \leq \int |x-y|^2 \ed \gamma_s(x,y)$, which implies 
\[
\begin{aligned}
W^2_2(\rho_N,\rho_s) - W^2_2(\rho_N,\rho_N^\varepsilon) &\leq \int_{M} |\nabla \psi - Y_s|^2 \ed \rho_N^\varepsilon - \int_{M} |\nabla \psi - \mathrm{Id}|^2 \ed \rho_N^\varepsilon\\ & =\int_{M} ( Y_s - \mathrm{Id}) \cdot(\mathrm{Id} +Y_s - 2\nabla \psi) \ed \rho_N^\varepsilon\,.
\end{aligned}
\]
Therefore,
\[
0 \leq g(s) - g(0) \leq \frac{1}{2\varepsilon} \int_{M} ( Y_s - \mathrm{Id}) \cdot(\mathrm{Id} +Y_s - 2\nabla \psi) \ed \rho_N^\varepsilon +\mathcal{U}(\rho_s) - \mathcal{U}(\rho_N^\varepsilon)\,.
\]
Dividing by $s$, taking the limit for $s\rightarrow 0$ and using equation \eqref{eq:explicitderu} gives
\begin{equation}
\label{eq:ineqpres}
\int_{M} \frac{\nabla\psi-\mathrm{Id}}{\varepsilon} \cdot v  \, \ed \rho_N^\varepsilon \leq - \int_{M}  P(\rho^\varepsilon_N) \mathrm{div}\,v\, \ed x \,.
\end{equation}
Since the same also holds replacing $v$ by $-v$,  equality holds  and we obtain equation \eqref{eq:gradientf} by a change of variables on the left-hand side of \eqref{eq:ineqpres}.

\end{proof}

\begin{remark}\label{Re:caclulgradientformel}
Note that using the same computation of equation \eqref{eq:explicitderu}, and performing a change of variables on its right-hand side, one can formally identify $\nabla_{\mathbb{X}} \mathcal{F}(X_t)=  \nabla U'( \rho_t)\circ {\flot}_t$ in equation \eqref{eq:secondorder} and \eqref{eq:gradient}.
%
%

\end{remark}


\section{Modulated energy}\label{sec:modulated}

In this section we introduce the two main quantities that we will need to measure the distance between continuous and discrete solutions of problems \eqref{eq:euler} and \eqref{eq:porous}. These are constructed as discrete versions of the classical relative kinetic and internal energy of the system expressed in Eulerian variables. Here we adapt these definitions to our discrete Lagrangian setting and to the regularized energy \eqref{eq:my}.

The relative kinetic energy in the discrete setting is defined as follows:

\begin{definition}[Relative kinetic energy]\label{def:relkin}
Given a curve ${u}: [0,T] \rightarrow  C^0(\mathbb{R}^d; \mathbb{R}^d)$, the relative kinetic energy of a discrete flow $\flot_N: [0,T]\rightarrow \mathbb{X}_N$ with respect to $\barr{u}$ at time $t$ is given by
\begin{equation}
\begin{aligned}\label{eq:relkin}
\mathcal{K}_{\barr{u}}(t,\flot_N) & \coloneqq  \frac{1}{2} \|\dot{\flot}_N(t,\cdot) - \barr{u}(t,\flot_N(t,\cdot))\|^2_{\mathbb{X}} \\&=   \frac{1}{2} \sum_{i=1}^N |\dot{\flot}_N^i(t) - \barr{u}(t,\flot_N^i(t))|^2 \rho_0[P_i]\,.
\end{aligned}
\end{equation}
\end{definition}

\begin{remark} The choice of the relative kinetic energy in definition \ref{def:relkin} can be motivated as follows. The kinetic energy can be viewed as a convex function of the density $\rho$ and the momentum $m = \rho u$ given by
\begin{equation}\label{eq:kinmrho}
 \int_M \frac{|m|^2}{2\rho}\,.
\end{equation}
Then, it is natural to measure the distance between two states $(\rho,m)$ and $(\tilde{\rho},\tilde{m})$, with $\tilde{m} = \tilde{\rho} \tilde{u}$, by considering the difference between the value of the functional \eqref{eq:kinmrho} at $(\rho,m)$ and the linear part of its Taylor expansion at $(\tilde{\rho},\tilde{m})$ in the direction $(\rho-\tilde{\rho},m-\tilde{m})$. The resulting quantity is given by
\begin{equation}\label{eq:relkinsmooth}
\int_M \frac{1}{2}|u-\tilde{u}|^2 \rho \, \ed x\,,
\end{equation}
which is precisely the Eulerian counterpart to equation \eqref{eq:relkin}.
\end{remark}
In the order to define the relative internal energy in the discrete setting, for any $\rho, \tilde{\rho}\in C^0(M,(0,\infty))$ we first define
\begin{equation}\label{eq:relpot}
\mathcal{U}(\rho|\tilde{\rho}) \coloneqq \int_M U(\rho|\tilde{\rho}) \, \ed x \,, 
\end{equation}
where
\begin{equation}
U(r|s) \coloneqq U(r) - U(s)  - U'(s) (r-s)\,.
\end{equation}
If $|U'_+(0)|<+\infty$, equation \eqref{eq:relpot} defines $\mathcal{U}(\rho|\tilde{\rho})$ for any $\rho, \tilde{\rho}\in C^0(M,[0,\infty))$. Since we assume $U$ to be strictly convex, $\mathcal{U}(\rho|\tilde{\rho})\geq 0$ and it vanishes if and only if $\rho =\tilde{\rho}$.

The relative internal energy in the discrete setting is defined in order to fit the solutions of the numerical schemes detailed in Section \ref{sec:timediscrete}, and in particular the corresponding time discretization, which we recall in the definition below.

\begin{definition}[Discrete relative internal energy]
Let $\tau>0$ a fixed time step, $N_T \in \mathbb{N}^*$ be the number of time steps with $T = \tau N_T$, and $t_n \coloneqq n \tau$ for any $0\leq n \leq N_T$.  
Given a curve $\barr{\rho}: [0,T] \rightarrow  C^0(\mathbb{R}^d;[0,\infty))$, the discrete relative internal energy of a discrete flow $\flot_N: [0,T]\rightarrow \mathbb{X}_N$ with respect to $\barr{\rho}$ at time $t\in [t_n,t_{n+1})$ is given by
\begin{equation}\label{eq:modpotential}
\mathcal{F}_{\varepsilon,\barr{\rho}}(t,\flot_N) \coloneqq \frac{\|\flot_N(t)-\flot_N^\varepsilon(t_n)\|^2_{\mathbb{X}}}{2\varepsilon} + \mathcal{U}({\rho}^\varepsilon_N(t_n)| \barr{\rho}(t))\,,
\end{equation}
where ${\flot}^\varepsilon_N(t_n)$ is any fixed minimizer of problem \eqref{eq:my}, i.e.
\[
{\flot}^\varepsilon_N(t_n) \in \underset{\sigma \in \mathbb{X}}{\mathrm{argmin}} \frac{\|\flot_N(t_n)-\sigma \|^2_{\mathbb{X}}}{2\varepsilon}  + \mathcal{F}(\sigma)\,,\]
and ${\rho}^\varepsilon_N(t_n) \coloneqq ({\flot}^\varepsilon_N(t_n))_\# \rho_0$.
 \end{definition}

\begin{remark} In the smooth Eulerian setting the relative internal energy would be given just by the functional in equation \eqref{eq:relpot}. Importantly, even if the potential energy of the discrete system is a convex functional on $\mathbb{X}$, the discrete relative internal energy in \eqref{eq:modpotential} does not correspond to this point of view and should rather be regarded as an approximation of \eqref{eq:relpot}. The same holds for the definition of the relative kinetic energy above, which does not coincide with the one obtained interpreting the kinetic energy as convex functional on $\mathbb{X}$. This time however there is no approximation since if we replaced $X_N$ by a smooth injective flow we could recover \eqref{eq:relkinsmooth} from \eqref{eq:relkin} by a simple change of variables.
\end{remark}

The convergence proof in Section \ref{sec:convergence} will rely on a Gr\"onwall argument based on the discrete relative energies \eqref{eq:relkin} and \eqref{eq:modpotential}. It will require us to control the time variation of the total discrete relative energy by itself. The advantage of adopting an Eulerian rather than Lagrangian point of view in the definitions above is that, in the Eulerian case, such a control can be enforced by exploiting simple algebraic properties of the functions $P$ and $U$.  More precisely, we will need to control the relative pressure
\begin{equation}\label{eq:relpress}
P(r|s) \coloneqq P(r) - P(s)  - P'(s) (r-s)\,
\end{equation}
by $U(r|s)$. To this end, we will make the following assumption: there exists a constant $A>0$ such that
\begin{equation}\label{eq:pbound}
|P''(r)|  \leq A\, U''(r)\, \quad \forall\, r> 0\,.
\end{equation}
This assumption is verified in the classical cases of interest of power laws and of the entropy. It implies the following lemma, which is an extract of Lemma 3.3 in \cite{giesselmann2017relative}.

\begin{lemma} \label{lem:Abound}
Let $U$ and $P$ be smooth functions on $[0,\infty)$ verifying \eqref{eq:thermodynamic} and \eqref{eq:pbound}. Then
\begin{equation}\label{eq:relpbound}
|P(r|s)| \leq A U(r|s) \, \quad \forall\, r,s>0\,.
\end{equation}
\end{lemma}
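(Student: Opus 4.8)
The plan is to integrate the pointwise bound \eqref{eq:pbound} twice, keeping careful track of the Taylor remainder structure. Recall that for a $C^2$ function $f$ on $[0,\infty)$, the second-order Taylor remainder centered at $s$ can be written in integral form as
\[
f(r) - f(s) - f'(s)(r-s) = \int_s^r (r - \tau) f''(\tau)\, \ed \tau .
\]
Applying this identity with $f = P$ and with $f = U$ gives
\[
P(r|s) = \int_s^r (r-\tau) P''(\tau)\, \ed \tau, \qquad U(r|s) = \int_s^r (r-\tau) U''(\tau)\, \ed \tau .
\]

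The key observation is that in both integrals the weight $(r-\tau)$ has a constant sign on the interval of integration between $s$ and $r$ (it is nonnegative when $s \leq r$ and nonpositive when $r \leq s$), so one may take absolute values inside. When $s \leq r$,
\[
|P(r|s)| \leq \int_s^r (r-\tau)\, |P''(\tau)|\, \ed \tau \leq A \int_s^r (r-\tau)\, U''(\tau)\, \ed \tau = A\, U(r|s),
\]
where the middle inequality uses \eqref{eq:pbound} and the fact that $U'' \geq 0$ by convexity of $U$, and the final equality uses that $U(r|s) \geq 0$ so that the integral equals its own absolute value. The case $r \leq s$ is symmetric: writing $\int_s^r = -\int_r^s$, the weight $(r-\tau)$ is $\leq 0$ on $[r,s]$, and the same chain of inequalities yields $|P(r|s)| \leq A\, U(r|s)$. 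This gives \eqref{eq:relpbound} for all $r,s>0$.

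There is essentially no obstacle here; the only point requiring a little care is the sign bookkeeping in the Taylor remainder, i.e.\ making sure that the inequality $|P''| \leq A U''$ is applied against a nonnegative measure $(r-\tau)\,\ed\tau$ (appropriately oriented) so that it transfers directly to the remainders without a spurious sign. Everything else is a direct consequence of the integral form of Taylor's theorem and the convexity of $U$. (One could alternatively phrase the argument via the function $g(r) := A\,U(r|s) - P(r|s)$, noting $g(s) = g'(s) = 0$ and $g''(r) = A U''(r) - P''(r) \geq |P''(r)| - P''(r) \geq 0$ together with a symmetric bound for the lower estimate, but the integral-remainder computation above is the most transparent.)
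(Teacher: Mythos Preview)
Your proof is correct and follows essentially the same approach as the paper: both arguments write $P(r|s)$ and $U(r|s)$ as second-order Taylor remainders in integral form and then apply the pointwise bound \eqref{eq:pbound} against the nonnegative weight. The only cosmetic difference is that the paper parametrizes the remainder as $(r-s)^2\int_0^1 (1-\theta)f''((1-\theta)s+\theta r)\,\ed\theta$, which makes the nonnegativity of the weight manifest and avoids the separate treatment of the cases $r\le s$ and $s\le r$.
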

\begin{proof}
We have $P(r|s) = (r-s)^2 \int_0^1 (1-\theta) P''((1-\theta)s + \theta r) \,\ed \theta$ and similarly for $U(r|s)$. Hence, using equation \eqref{eq:pbound},
\[
|P(r|s)|\leq (r-s)^2 \int_0^1 (1-\theta) |P''((1-\theta)s + \theta r)| \,\ed \theta \leq A\, U(r|s)\,.
\]
\end{proof}

\begin{remark}\label{lem:positivity}
In the following, in order to treat the case of the convergence towards solutions with vanishing density we will need to add the hypothesis that $U$ admits a right third derivative at $0$, i.e. $|U'''_+(0)|<\infty$. Note that in this setting, if equation \eqref{eq:relpbound} holds for $r,s>0$, then it holds by continuity for $r,s\geq 0$. 
\end{remark}

\section{Convergence of the fully discrete scheme}\label{sec:convergence}

In this section we use the discrete relative energies introduced in Section \ref{sec:modulated} to prove our convergence results for the space-time discretization of problems \eqref{eq:euler} and \eqref{eq:porous} defined in Section \ref{sec:timediscrete}.

 Since the image of the discrete solution $\flot_N(t)$ (i.e.\ the particles' positions) may not be contained in the domain $M$, an essential ingredient of the proof is the possibility to extend the exact solution of the continuous models outside the domain. Importantly, besides keeping the same regularity, the extended density and veloctiy will need to satisfy the continuity equation also outside the domain. We construct such extended variables in the following lemma, by exploting the properties of the continuity equation and using an extension theorem due to Fefferman \cite{fefferman2009extension}.

\begin{lemma}\label{lem:continuityextension}
Let $u : [0,T]\times M \rightarrow \mathbb{R}^d$ be such that  $u \cdot n_{\partial M} = 0$ on $[0,T]\times \partial M$, and $\rho_0 : M \rightarrow [0,\infty)$. If $u$ is of class $C^{2,1}$ in space, uniformly in time, and $\rho_0$ is of class $C^{1,1}$, then there exist $\tilde{u}: [0,T]\times \mathbb{R}^d \rightarrow \mathbb{R}^d$ and  $\tilde{\rho}: [0,T] \times \mathbb{R}^d \rightarrow \mathbb{R}$ such that:
\begin{enumerate}
\item $\tilde{u}$ is an extension of $u$, i.e.\ $\tilde{u}(t)|_M = u(t)$ for all $t\in [0,T]$, and there exists a constant $C>0$ only depending on $d$ such that
 \begin{equation} \label{eq:boundu1} 
 \sup_{t\in[0,T]}  \| \tilde{u}(t) \|_{C^{2,1}} \leq C \sup_{t\in[0,T]} \| {u}(t) \|_{C^{2,1}}\, ;
 \end{equation} moreover, if $u\in C^1([0,T],C^{2,1}(M,\mathbb{R}^d))$ then
 \begin{equation} \label{eq:boundu2}
  \sup_{t\in[0,T]}  \| \partial_t \tilde{u}(t) \|_{C^{2,1}} \leq C \sup_{t\in[0,T]} \| \partial_t {u}(t) \|_{C^{2,1}}\,;
  \end{equation}
\item the couple $(\tilde{\rho},\tilde{u})$ solves the continuity equation:
\[
\partial_t \tilde{\rho} + \mathrm{div} (\tilde{\rho} \tilde{u}) = 0 \quad \text{on }\, [0,T] \times \mathbb{R}^d,
\]
and in particular the curve $\rho:t\in[0,T]\rightarrow \tilde{\rho}(t)|_M$ is the unique solution to the continuity equation on $[0,T] \times M$ associated with $u$ and initial conditions $\rho(0) = \rho_0$;  if $\rho_0 \geq \rho_{min}>0$, then  $\tilde{\rho}\geq \tilde{\rho}_{min}>0$, where $\tilde{\rho}_{min}$ only depend on $\rho_{min}$, $\sup_{t\in[0,T]} \|u(t)\|_{C^{2,1}}$, $T$ and $d$; moreover, $\sup_{t\in[0,T]} \|\tilde{\rho}(t)\|_{C^{1,1}}$ only depends on $\|\rho_0\|_{C^{1,1}}$, $\sup_{t\in[0,T]} \|u(t)\|_{C^{2,1}}$, $T$, $d$ (and on $\rho_{min}$ in the case $\rho_0 \geq \rho_{min}>0$).
\end{enumerate}
\end{lemma}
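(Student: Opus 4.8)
The plan is to construct $\tilde u$ first by a purely static extension procedure applied time-by-time, and then to obtain $\tilde\rho$ by \emph{solving} the continuity equation associated with $\tilde u$ starting from an extension of $\rho_0$; the point is that once $\tilde u$ and an initial datum are fixed, the continuity equation has a unique solution given by push-forward along the flow, and all the quantitative bounds follow from standard flow estimates. First I would invoke Fefferman's extension theorem \cite{fefferman2009extension} (or, more classically, a Whitney/Stein extension for $C^{2,1}=C^{2,\mathrm{Lip}}$ domains with Lipschitz boundary) to obtain, for each fixed $t$, a bounded linear extension operator $E$ on $C^{2,1}(M,\mathbb{R}^d)$ with operator norm depending only on $d$ (and the geometry of $M$), and set $\tilde u(t) \coloneqq E(u(t))$. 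Linearity of $E$ in the datum gives \eqref{eq:boundu1} directly, and—since $E$ is applied to the $t$-family—commutes with $\partial_t$, so differentiating in time and applying $E$ to $\partial_t u(t)$ yields \eqref{eq:boundu2} whenever $u\in C^1([0,T],C^{2,1})$. One may also truncate $\tilde u$ far from $M$ (multiplying by a fixed cutoff) so that it is compactly supported, which is harmless and convenient for the flow estimates; this does not affect $\tilde u|_M=u$.

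Next I would extend the initial density: let $\tilde\rho_0 \coloneqq E'(\rho_0)$ for a $C^{1,1}$ extension operator $E'$, chosen moreover so that $\tilde\rho_0\ge 0$ everywhere (e.g. compose with a smooth truncation of the negative part away from $M$, or use that a $C^{1,1}$ extension can be taken to preserve nonnegativity up to a controlled modification; if $\rho_0\ge\rho_{min}>0$ on $M$, arrange $\tilde\rho_0\ge \rho_{min}/2>0$). Then define $\tilde\rho(t)$ as the solution of the continuity equation $\partial_t\tilde\rho + \operatorname{div}(\tilde\rho\,\tilde u)=0$ on $[0,T]\times\mathbb{R}^d$ with $\tilde\rho(0)=\tilde\rho_0$. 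Since $\tilde u$ is $C^{2,1}$ in space uniformly in time (in particular Lipschitz, uniformly in $t$), its flow $Y_t$ exists globally, is a $C^{2,1}$ diffeomorphism of $\mathbb{R}^d$, and $\tilde\rho(t) = (Y_t)_\#\tilde\rho_0 = \big(\tilde\rho_0/\det\nabla Y_t\big)\circ Y_t^{-1}$, exactly as in \eqref{eq:explicitpush}. This formula gives everything: nonnegativity is preserved because $\det\nabla Y_t>0$ and $\tilde\rho_0\ge0$; if $\tilde\rho_0\ge\tilde\rho_{min,0}>0$ then $\tilde\rho(t)\ge \tilde\rho_{min,0}/\sup_{t,x}\det\nabla Y_t(x) \eqqcolon \tilde\rho_{min}>0$, with $\sup\det\nabla Y_t$ controlled by $e^{T\sup_t\|\operatorname{div}\tilde u(t)\|_\infty}\le e^{CT\sup_t\|u(t)\|_{C^{2,1}}}$ via Liouville's formula \eqref{eq:jacobian}; and the $C^{1,1}$ bound on $\tilde\rho(t)$ follows by differentiating the push-forward formula once and using Grönwall estimates for $\nabla Y_t$, $\nabla^2 Y_t$ in terms of $\|\tilde u\|_{C^{2,1}}$ and $\|\tilde\rho_0\|_{C^{1,1}}$ (the second derivatives of $Y_t$ are exactly where one needs $\tilde u\in C^{2,1}$, i.e. $\nabla^2\tilde u$ Lipschitz, to close the estimate). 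Finally, restricting to $M$: because $u\cdot n_{\partial M}=0$ on $\partial M$, the flow of $u$ maps $S_0\subseteq M$ into $M$ (the boundary is not crossed), so $Y_t|_M$ agrees with the flow of $u$ and $\tilde\rho(t)|_M$ solves the continuity equation on $[0,T]\times M$ with datum $\rho_0$; uniqueness of that $M$-problem is standard for Lipschitz velocity fields.

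The main obstacle I anticipate is not the existence of the extensions but the bookkeeping of the quantitative constants—specifically, propagating a $C^{1,1}$ bound on $\tilde\rho(t)$ through the push-forward formula requires controlling $\|\nabla^2 Y_t\|_\infty$ and the Lipschitz seminorm of $\nabla^2 Y_t$ (equivalently of $\nabla Y_t^{-1}$, $\nabla^2 Y_t^{-1}$), which is the reason the hypothesis is $C^{2,1}$ on $u$ rather than merely $C^2$: one differentiates the variational equation for $\nabla Y_t$ and applies Grönwall, and the Lipschitz-in-$x$ control of $\nabla^2\tilde u$ is exactly what makes the resulting ODE for $\nabla^2 Y_t$ have Lipschitz right-hand side. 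A secondary subtlety is choosing the extension $\tilde\rho_0$ to be simultaneously $C^{1,1}$, globally nonnegative, and (when applicable) bounded below by a positive constant, with all constants depending only on the stated data; this can be handled by first extending, then modifying the extension outside a neighborhood of $M$ by a fixed cutoff and, if needed, adding a small nonnegative $C^{1,1}$ bump supported away from $M$—none of which changes $\tilde\rho_0|_M$ or spoils the norm bounds.
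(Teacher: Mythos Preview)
Your approach is essentially the same as the paper's: apply Fefferman's extension operator time-by-time to obtain $\tilde u$, then define $\tilde\rho$ via the push-forward formula along the flow of $\tilde u$ starting from a $C^{1,1}$ extension of $\rho_0$. The quantitative bounds you sketch (Liouville for $\det\nabla Y_t$, Gr\"onwall for $\nabla Y_t$ and $\nabla^2 Y_t$, and the role of the $C^{2,1}$ hypothesis on $u$) are exactly the ones the paper relies on.

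There is however one step that would fail as written. You insist on arranging $\tilde\rho_0\ge 0$ globally, but this is (a) not required by the lemma---note the codomain is $\tilde\rho:[0,T]\times\mathbb{R}^d\to\mathbb{R}$, not $[0,\infty)$---and (b) impossible in general at the $C^{1,1}$ level. If $\rho_0$ vanishes to first order at some boundary point with nonzero gradient (e.g.\ $\rho_0(x)=x_1$ on $[0,1]^d$), any $C^1$ extension must become negative immediately outside $M$, and no ``smooth truncation of the negative part away from $M$'' can repair this without destroying $C^1$ regularity. The paper simply allows $\tilde\rho$ to be negative off $M$ in this case (and this is precisely why, in the downstream proofs, $U$ is extended to a $C^3$ function on all of $\mathbb{R}$ under the hypothesis $|U'''_+(0)|<\infty$). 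So drop the nonnegativity requirement and your plan goes through.

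For the strictly positive case $\rho_0\ge\rho_{min}>0$, your ``arrange $\tilde\rho_0\ge\rho_{min}/2$ by modification'' is workable but vague. The paper does this in one line by setting $\tilde\rho_0\coloneqq\exp\bigl(L_1\log\rho_0\bigr)$, where $L_1$ is the $C^{1,1}$ extension operator; this is automatically strictly positive and $C^{1,1}$, and the lower bound then follows directly from the bound on $\det\nabla Y_t$ via the push-forward formula.
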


\begin{proof}
The first part is just an application of the construction proposed by Fefferman in \cite{fefferman2009extension} to extend H\"older continuous functions. In particular, by theorem 2 in \cite{fefferman2009extension}, for any $k\geq 0$ there exists a linear bounded operator $L_k: C^{k,1}(M) \rightarrow C^{k,1}(\mathbb{R}^d)$ such that the norm of $L_{k}$ is bounded by a  constant depending only on $d$ and $k$, and for any $f\in C^{k,1}(\mathbb{R}^d)$ one has $L_k f|_M =f$. Then, setting $\tilde u(t) = L_{2} \, u(t)$ (applied component-wise) for all $t \in[0,T]$ for a given extension operator $L_2$, we obtain the estimate \eqref{eq:boundu1} by the boundedness of $L_2$. In the case where  $u\in C^1([0,T],C^{2,1}(M,\mathbb{R}^d))$, by linearity of $L_2$ we have $\partial_t\tilde{u} = L_2 \partial_t u$, from which we deduce \eqref{eq:boundu2}.

For the second part, we first introduce  ${X}:[0,T]\times \mathbb{R}^d \rightarrow \mathbb{R}^d$ the flow of $\tilde{u}$, i.e.\ the solution to the flow equation $\dot{X}_t = \tilde{u}(t, X_t)$ with initial conditions $X_0 = \mathrm{Id}$. 
For all times $t\in[0,T]$, $X_t$ is a $C^{2,1}$ diffeomorphism of $\mathbb{R}^d$ and by construction the $C^{2,1}$ norm of $X_t$ and $X_t^{-1}$ only depend on that of $u$ and on $T$. 
Note, in particular, that the Jacobian determinant solves
\[
\partial_t \det \nabla X_t = \mathrm{div}\, \tilde{u}(t, X_t) \, \det \nabla X_t\,,
\]
which implies that for all $(t,x)\in[0,T]\times \mathbb{R}^d$,
\begin{equation}\label{eq:jacobbound}
\max \left \{\mathrm{det}\nabla X_t(x), \frac{1}{\mathrm{det}\nabla X_t(x)}\right\} \leq \exp \left(\int_0^t \|\mathrm{div} \,\tilde{u}(t) \|_{\infty} \,\ed t\right)\,.
\end{equation}
Now, if $\rho_0$ is not strictly-positive, we define an extension $\tilde{\rho}_0:\mathbb{R}^d\rightarrow \mathbb{R}$ of $\rho_0$ on the whole space by $\tilde{\rho}_0\coloneqq L_{1} \rho_0$ (and note that $\tilde{\rho}_0$ may be negative). Then,
we define for all $t\in[0,T]$
\begin{equation}\label{eq:rhotilde}
\tilde{\rho}(t) = \frac{\tilde{\rho}_0}{\det \nabla X_t}\circ X_t^{-1}\,,
\end{equation}
and therefore the regularity of $\tilde{\rho}$ in space derives from that of $\tilde{\rho}_0$, $X_t^{-1}$ and $\det \nabla X_t$, and from the bound \eqref{eq:jacobbound}.
Moreover, by direct computation one can check that $\tilde{\rho}$ solves the continuity equation with velocity $\tilde{u}$. On the other hand, if $\rho_0 \geq \rho_{min} > 0$, we define $\tilde{\rho}_0\coloneqq \exp( L_{1} \log(\rho_0))$ and $\tilde{\rho}$ as above. Then, the lower bound on $\tilde{\rho}$ can be deduced from equations \eqref{eq:rhotilde} and \eqref{eq:jacobbound}.
\end{proof}


In the following, we finally prove Theorem \ref{th:convsemieuler} and \ref{th:convsemigradient}, which establish a bound on the rate of convergence for our space-time discretizations of problems \eqref{eq:euler} and \eqref{eq:porous}, respectively. 

\begin{proof}[Proof of Theorem \ref{th:convsemieuler}] Throughout the proof we will denote by $\langle \cdot, \cdot\rangle$  and $\|\cdot\|$ the inner product and norm on $\mathbb{X}$, respectively, i.e.\ the $L^2$ inner product and norm weighted by $\rho_0$. Moreover, for any function $f:[0,T] \rightarrow C^{0,1}(E)$ with $E\subseteq \mathbb{R}^d$, we will denote by $\mathrm{Lip}_T(f) \coloneqq \sup_{t\in[0,T]} \mathrm{Lip}( f(t))$ and we will use the same notation for vector-valued functions.

We denote by $\tilde{u}$ and $\tilde{\rho}$ the extensions of $u$ and $\rho$, respectively, constructed via Lemma \ref{lem:continuityextension}. Note that if $\rho$ is not strictly-positive, $\tilde{\rho}$ may be negative. However, since in the case we suppose that $|U'''_+(0)|<+\infty$, we replace $U$ by a $C^3$ extension defined on $\mathbb{R}$ (which we still denote by $U$ with an abuse of notation), e.g., by setting $U(r) = \sum_{n=0}^3 U^{(n)}_+(0)r^n/n!$ for $r<0$ . Then, $U^{(n)}(\tilde{\rho})$ is Lipschitz in space, uniformly in time, for $n=0,1,2$.

We define the relative energy as follows:
\begin{equation}\label{eq:modenergy}
\mathcal{E}_{\barr{\rho},\barr{u}}(t,\flot_N) \coloneqq \mathcal{K}_{\tilde{u}}(t,\flot_N) +\mathcal{F}_{\varepsilon,\barr{\rho}}(t,\flot_N) + \frac{1}{2}\| \flot_N(t) - \flot(t) \|^2\,.
\end{equation}
Note that besides the relative kinetic and internal energy, we also included an additional term in \eqref{eq:modenergy} given by the squared $L^2$ distance between the flows and which will help us deal with the fact that the image of $\flot_N(t)$ may not be included in $M$.  Note also that while the relative kinetic energy needs to be computed using the extended velocity field $\tilde{u}$, for the relative internal energy we can use indifferently either $\rho$ or $\tilde{\rho}$ since it is defined via an integral over the (fixed) domain $M$.

The strategy of the proof is the following. First of all, we compute separately the time derivative of the three terms in \eqref{eq:modenergy} for $t\in[t_n,t_{n+1})$. We then apply Gr\"onwall’s inequality on the same time interval to obtain a first estimate. Finally, we use a discrete Gr\"onwall's inequality for $0\leq n \leq N_T$ to prove the result. 

%

\paragraph{\textbf{Step 1: Time derivative of the relative kinetic energy}} We introduce the material derivative
\[
D_t \tilde{u}(t) \coloneqq \partial_t \tilde{u}(t) + \tilde{u}(t) \cdot \nabla \tilde{u}(t)\,.
\]
Then, using equation \eqref{eq:disceuler}, we have
\begin{equation}\label{eq:derkinetic0}
\begin{aligned}
\frac{\ed}{\ed t}  \mathcal{K}_{\tilde{u}}(t,\flot_N) & = \langle \ddot{\flot}_N(t) - \partial_t \tilde{u}(t,\flot_N(t)) - \dot{\flot}_N(t) \cdot \nabla \tilde{u}(t,\flot_N(t)), \dot{\flot}_N(t) - \tilde{u}(t,\flot_N(t))\rangle \\
& = -\langle (\dot{\flot}_N(t) -  \tilde{u}(t,\flot_N(t))) \cdot \nabla \tilde{u}(t,\flot_N(t)), \dot{\flot}_N(t) - \tilde{u}(t,\flot_N(t))\rangle \\
& \quad  - \langle  \varepsilon^{-1} ( \flot_N(t)-\flot_N^\varepsilon(t_n)) + D_t\tilde{u}(t, \flot_N(t))), \dot{\flot}_N(t) - \tilde{u}(t,\flot_N(t))\rangle\,,
\end{aligned}
\end{equation}
where we replaced $ \ddot{\flot}_N(t) $ using \eqref{eq:disceuler}, and we removed the projection onto $\mathbb{X}_N$, since  $\dot{\flot}_N(t) - \tilde{u}(t,\flot_N(t)) \in \mathbb{X}_N$. Observe that the system \eqref{eq:euler} implies 
\[
\rho_0 D_t \tilde{u}(t,\flot(t)) = - \rho_0 \nabla U'(\tilde{\rho}(t,\flot(t))\,.
\]
Then, adding and subtracting $ \nabla U'(\tilde{\rho}(t, \flot(t)))$ and $ \nabla U'(\tilde{\rho}(t, \flot_N(t)))$ in the last inner product in \eqref{eq:derkinetic0}, we obtain
\begin{equation}\label{eq:derkinetic}
\begin{aligned}
\frac{\ed}{\ed t}  \mathcal{K}_{\tilde{u}}(t,\flot_N)  =& -\langle (\dot{\flot}_N(t) -  \tilde{u}(t,\flot_N(t))) \cdot \nabla \tilde{u}(t,\flot_N(t)), \dot{\flot}_N(t) - \tilde{u}(t,\flot_N(t))\rangle \\
& - \langle D_t\tilde{u}(t, \flot_N(t)))-D_t\tilde{u}(t, \flot(t))), \dot{\flot}_N(t) - \tilde{u}(t,\flot_N(t))\rangle\\
& + \langle \nabla U'(\tilde{\rho}(t, \flot(t)))-\nabla U'(\tilde{\rho}(t, \flot_N(t))), \dot{\flot}_N(t) - \tilde{u}(t,\flot_N(t))\rangle\\
& - \langle\varepsilon^{-1} ( \flot_N(t)-\flot_N^\varepsilon(t_n))-\nabla U'(\tilde{\rho}(t, \flot_N(t))), \dot{\flot}_N(t) - \tilde{u}(t,\flot_N(t))\rangle\,.
\end{aligned}
\end{equation}


\paragraph{\textbf{Step 2: Time derivative of the relative internal energy}} 
First of all, we define the following quantity which will be useful for the computations below and also later in the Gr\"onwall argument (see also Remark \ref{rem:relentropy} below):
\begin{equation}\label{eq:Hndef}
H^n(t) \coloneqq \int_{\mathbb{R}^d} U'(\tilde{\rho}(t)) \ed ( \rho_N^\varepsilon(t_n ) -  \rho_N(t))\,.
\end{equation}

We now compute the time derivatives of the different terms in $\mathcal{F}_{\varepsilon,\rho} (t,X_N)$ (defined by equation \eqref{eq:modpotential}) for $t\in[t_n,t_{n+1})$.
By the same computations as in \eqref{eq:explicitderu}, we have
\begin{equation}\label{eq:derfirstorder0}
\frac{\ed}{\ed t} \mathcal{U}(\barr{\rho}(t)) = -\int_M P(\barr{\rho}(t)) \mathrm{div}\,\barr{u}(t) \, \ed x \,.
\end{equation}
For the time derivative of the discrete energy we can arrange the terms in order to obtain a similar quantity. In particular, we have 
\begin{equation}\label{eq:derfirstorder}
\begin{aligned}
\MoveEqLeft[6] \frac{\ed}{\ed t} \left( \frac{\|\flot_N(t) - \flot^\varepsilon_N(t_n)\|^2}{2\varepsilon} + \mathcal{U}(\rho_N^\varepsilon(t_n) )\right) & \\ & = \varepsilon^{-1} \langle \flot_N(t)-\flot_N^\varepsilon(t_n), \dot{\flot}_N(t)\rangle \\
& = \varepsilon^{-1}\langle \flot_N(t)-\flot_N^\varepsilon(t_n), \dot{\flot}_N(t) - \tilde{u}(t,\flot_N(t)) \rangle\\
& \quad +\varepsilon^{-1} \langle \flot_N(t)-\flot_N^\varepsilon(t_n), \tilde{u}(t,\flot_N(t)) - \tilde{u}(t, \flot_N^\varepsilon(t_n))\rangle\\
& \quad + \varepsilon^{-1} \langle \flot_N(t) - \flot_N(t_n),  {u}(t, \flot_N^\varepsilon(t_n))\rangle \\&\quad + \varepsilon^{-1}\langle \flot_N(t_n) - \flot_N^\varepsilon(t_n),  {u}(t, \flot_N^\varepsilon(t_n))\rangle\,,
\end{aligned}
\end{equation}
and note that by Lemma \ref{lem:gradientF}, the last term in in equation \eqref{eq:derfirstorder} can be written as follows
\begin{equation}\label{eq:optimalitypres}
\varepsilon^{-1}\langle \flot_N(t_n) - \flot_N^\varepsilon(t_n),  {u}(t, \flot_N^\varepsilon(t_n))\rangle= -\int_M P(\rho_N^\varepsilon(t_n)) \mathrm{div}\,\barr{u}(t) \, \ed x \,.
\end{equation}

We write the time derivative of the remaining term in $\mathcal{F}_{\varepsilon,\rho}(t, X_N)$ as follows:
\begin{equation}\label{eq:derusplit}
\frac{\ed}{\ed t}   \int_M  U'({\rho}(t))( \rho^\epsilon_N(t_n) - \barr{\rho}(t)) \, \ed x =  \frac{\ed}{\ed t} H^n(t) +  \frac{\ed}{\ed t}   \int_{\mathbb{R}^d}  U'(\tilde{\rho}(t))\ed( \rho_N(t) - \barr{\rho}(t)).
\end{equation}
Note that here we identify $\rho(t)$ with a measure on $\mathbb{R}^d$ extending it by zero, and we will use the same convention also in the following. Then, we compute
\begin{equation}\label{eq:dersecondorder}
\begin{aligned} 
\frac{\ed}{\ed t}   \int_{\mathbb{R}^d} U'(\tilde{\rho}(t))\, \ed ( \rho_N(t) - \barr{\rho}(t))  =&   \langle \nabla U'(\tilde{\rho}(t)) \circ \flot_N(t), \dot{\flot}_N(t)\rangle\\ &- \int_M \barr{u}(t) \cdot \nabla U'(\barr{\rho}(t)) \barr{\rho}(t)\,\ed x\\ & -\int_{\mathbb{R}^d}  U''(\tilde{\rho}(t))\mathrm{div}(\tilde{\rho}(t)\tilde{u}(t)) \,\ed (\rho_N(t) -\rho(t)) .
\end{aligned}
\end{equation}
Remark that here we used the fact that the continuity equation holds also for the extended functions $(\tilde{\rho},\tilde{u})$, which is due to the construction described in Lemma \ref{lem:continuityextension}. 
Using  $\mathrm{div}(\tilde{\rho}\tilde{u}) = \tilde{\rho} \,\mathrm{div} \tilde{u} + \nabla \tilde{\rho} \cdot \tilde{u}$ and then using $P'(r) = r U''(r)$, we get 
\begin{equation}\label{eq:dersecondordertemp}
\begin{aligned}
\frac{\ed}{\ed t}   \int_{\mathbb{R}^d} U'(\tilde{\rho}(t))\, \ed ( \rho_N(t) - \barr{\rho}(t)) & =
 \langle \nabla U'(\tilde{\rho}(t)) \circ \flot_N(t), \dot{\flot}_N(t)-\tilde{u}(t,\flot_N(t))\rangle\\ 
&\quad -\int_{\mathbb{R}^d}  P'(\tilde{\rho}(t))\mathrm{div}\tilde{u}(t) \ed ( \rho_N(t)  - \barr{\rho}(t)) \,.
\end{aligned}
\end{equation}
Putting this back into equation \eqref{eq:derusplit}, we find
\begin{equation}\label{eq:dersecondorder1}
\begin{aligned}
\frac{\ed}{\ed t}   \int_M  U'(\barr{\rho}(t))( \rho^\epsilon_N(t_n) - \barr{\rho}(t)) \, \ed x & =  \frac{\ed}{\ed t} H^n(t) + 
 \langle \nabla U'(\tilde{\rho}(t)) \circ \flot_N(t), \dot{\flot}_N(t)-\tilde{u}(t,\flot_N(t))\rangle\\ 
&\quad -\int_{\mathbb{R}^d}  P'(\tilde{\rho}(t))\mathrm{div}\tilde{u}(t) \ed ( \rho_N(t) -\rho^\epsilon_N(t_n) ) \\
& \quad -\int_M  P'(\barr{\rho}(t)) \mathrm{div}\,\barr{u}(t) ( \rho^\epsilon_N(t_n) - \barr{\rho}(t)) \, \ed x \,.
\end{aligned}
\end{equation}
Note that we have added and subtracted  $\rho^\epsilon_N(t_n)$ in the last integral, which allows us to retrieve $P(\rho_N^\varepsilon(t_n)|\rho)$ when combining all terms. In fact, replacing  \eqref{eq:optimalitypres} into  \eqref{eq:derfirstorder}, and subtracting the contributions from \eqref{eq:derfirstorder0} and \eqref{eq:dersecondorder1}, we obtain
\begin{equation}\label{eq:derpotential}
\begin{aligned}
\frac{\ed}{\ed t} \mathcal{F}_{\varepsilon,\barr{\rho}}(t,\flot_N) & =  \langle  \varepsilon^{-1} (\flot_N(t)-\flot_N^\varepsilon(t_n)) -\nabla U'(\tilde{\rho}(t, \flot_N(t))), \dot{\flot}_N(t) - \tilde{u}(t,\flot_N(t)) \rangle\\
& \quad +  \varepsilon^{-1}\langle  \flot_N(t)-\flot_N^\varepsilon(t_n), \tilde{u}(t,\flot_N(t)) - \tilde{u}(t,\flot_N^\varepsilon(t_n))\rangle\\
& \quad - \int_M P(\rho^\varepsilon_N(t_n)|\barr{\rho}(t)) \mathrm{div}\,\barr{u}(t) \, \ed x\\& \quad 
+ \int_{\mathbb{R}^d} P'(\tilde{\rho}(t))\mathrm{div}(\tilde{u}(t)) \ed ( \rho_N(t) -\rho^\epsilon_N(t_n) )  \\
& \quad +\varepsilon^{-1} \langle \flot_N(t) - \flot_N(t_n),  \tilde{u}(t,\flot_N^\varepsilon(t_n))\rangle -\frac{\ed}{\ed t} H^n(t) \,.
\end{aligned}
\end{equation}
We finally observe that the first term on the right-hand side of equation \eqref{eq:derpotential} coincides with the opposite of the last term in \eqref{eq:derkinetic}. Therefore the two terms cancel out when adding the two equations. The decomposition of the time derivative in \eqref{eq:derfirstorder} is designed to exploit this feature, which is a consequence of energy conservation.
  

\paragraph{\textbf{Step 3: Gr\"onwall's argument on $[t_n,t_{n+1})$}} Combining
\[
\frac{\ed}{\ed t} \frac{1}{2} \| \flot_N(t) - \flot(t)\|^2  = \langle \dot{\flot}_N(t) - \dot{\flot}(t), \flot_N(t) - \flot(t)\rangle 
\]
with equations \eqref{eq:derkinetic} and \eqref{eq:derpotential}, we obtain
\begin{equation}\label{eq:dertotal}
\begin{aligned}
\frac{\ed}{\ed t} \mathcal{E}_{\rho,\barr{u}}(t,\flot_N) & =  - \langle D_t\tilde{u}(t, \flot_N(t)))-D_t\tilde{u}(t, \flot(t))), \dot{\flot}_N(t) - \tilde{u}(t,\flot_N(t))\rangle\\
&\quad + \langle \nabla U'(\tilde{\rho}(t, \flot(t)))-\nabla U'(\tilde{\rho}(t, \flot_N(t))), \dot{\flot}_N(t) - \tilde{u}(t,\flot_N(t))\rangle\\
& \quad -\langle (\dot{\flot}_N(t) -  \tilde{u}(t,\flot_N(t))) \cdot \nabla \tilde{u}(t,\flot_N(t)), \dot{\flot}_N(t) - \tilde{u}(t,\flot_N(t))\rangle\\
& \quad + \langle \dot{\flot}_N(t) - \dot{\flot}(t), \flot_N(t) - \flot(t)\rangle \\
& \quad +  \varepsilon^{-1}\langle  \flot_N(t)-\flot_N^\varepsilon(t_n), \tilde{u}(t,\flot_N(t)) - \tilde{u}(t,\flot_N^\varepsilon(t_n))\rangle \\ 
& \quad - \int_M P(\rho^\varepsilon_N(t_n)|\barr{\rho}(t)) \mathrm{div}\,\barr{u}(t) \, \ed x \\
& \quad +\int_{\mathbb{R}^d} P'(\tilde{\rho}(t))\mathrm{div}(\tilde{u}(t)) \ed ( \rho_N(t) -\rho^\epsilon_N(t_n) ) \\
& \quad + \varepsilon^{-1} \langle \flot_N(t) - \flot_N(t_n),  {u}(t,\flot_N^\varepsilon(t_n))\rangle -  \frac{\ed}{\ed t} H^n(t)\\
& \eqqcolon J_1 + J_2+ J_3 +J_4 + J_5 +J_6+J_7 + J_8  - \frac{\ed}{\ed t} H^n(t)\,.
\end{aligned}
\end{equation}

Applying  Cauchy-Schwarz and then Young's inequality to the first two terms we obtain
\begin{equation}\label{eq:J12}
J_1 + J_2 \leq 2 (\mathrm{Lip}_T(D_t \tilde{u})  + \mathrm{Lip}_T( \nabla U'(\tilde{\rho}))) \left( \mathcal{K}_{\tilde{u}}(t,\flot_N) + \frac{1}{2}\| \flot_N(t) - \flot(t) \|^2\right)\,,
\end{equation}
where $D_t \tilde{u}$ and $\nabla U'(\tilde{\rho})$ are interpreted as functions on $[0,T] \times \mathbb{R}^d$.

For $J_4$, we have
\begin{equation}\label{eq:derflows}
\begin{aligned}
J_4
& = \langle \dot{\flot}_N(t) - \tilde{u}(t,\flot_N(t)), \flot_N(t) - \flot(t)\rangle
\\ &\quad +  \langle \tilde{u}(t,\flot_N(t)) - \tilde{u}(t,\flot(t)), \flot_N(t) - \flot(t)\rangle\\
 & \leq \mathcal{K}_{\tilde{u}}(t,\flot_N) + \left(1+ 2 \mathrm{Lip}_T(\tilde{u})\right) \frac{1}{2} \| \flot_N(t) - \flot(t)\|^2 \,,
\end{aligned}
\end{equation}

Using the estimate \eqref{eq:derflows}, we find 
\begin{equation}\label{eq:J3456}
\begin{aligned}
\sum_{i=3}^6 J_i & \leq (1+ 2 \mathrm{Lip}_T(\tilde{u}))\left ( \mathcal{K}_{\tilde{u}}(t,\flot_N) + \frac{\|\flot_N(t)-\flot(t)\|^2}{2} \right) \\
& \quad + \mathrm{Lip}_T(\tilde{u}) \frac{\|\flot_N(t)-\flot_N^\varepsilon(t_n)\|^2}{\varepsilon} + A \mathrm{Lip}_T(\tilde{u}) \mathcal{U}(\rho_N^\varepsilon(t_n)| \barr{\rho}(t)) \\
& \leq (1+ A'\mathrm{Lip}_T(\tilde{u})) \mathcal{E}_{\rho,u}(t,\flot_N) \,,
\end{aligned} 
\end{equation}
where $A'\coloneqq \max(A,2)$, and we used for $J_6$ the inequality given in Lemma \ref{lem:Abound} (see also Remark \ref{lem:positivity}).
Hence, combining \eqref{eq:J12} and \eqref{eq:J3456} we obtain
\begin{equation}\label{eq:J16}
\sum_{i=1}^6 J_i \leq C_1 \mathcal{E}_{\barr{\rho},\barr{u}}(t,\flot_N)\,,
\end{equation}
where $C_1 \coloneqq 2 \mathrm{Lip}_T(D_t \tilde{u}) + 2\mathrm{Lip}_T(\nabla U'(\tilde{\rho})) + A' \, \mathrm{Lip}_T(\tilde{u}) +1 $.
For $J_7$ we have
\begin{equation}\label{eq:J7}
\begin{aligned}
J_7 & \leq \mathrm{Lip}_T(P'(\tilde{\rho})\, \mathrm{div}\,\tilde{u}) W_1(\rho_N(t),\rho_N^\varepsilon(t_n)) \\ & \leq C_2 \left(\frac{\varepsilon}{2} + \frac{W_2^2(\rho_N(t),\rho_N^\varepsilon(t_n))}{2\varepsilon}\right) \\ 
& \leq C_2 \left(\frac{\varepsilon}{2} + \frac{\| \flot_N(t) - \flot_N^\varepsilon(t_n)\|^2}{2\varepsilon}\right) \,,
\end{aligned}
\end{equation}
where $W_1(\cdot,\cdot)$ denotes the $L^1$-Wasserstein distance and we have used the inequality $W_1(\rho_N(t),\rho_N^\varepsilon(t_n)) \leq W_2(\rho_N(t),\rho_N^\varepsilon(t_n))$ (see Chapter 5 in \cite{santambrogio2015optimal}), and where $C_2\coloneqq \mathrm{Lip}_T(P'(\tilde{\rho})\, \mathrm{div}\,\tilde{u})$. 

%

For $J_8$ we have
\begin{equation}\label{eq:J8bound}
\begin{aligned}
J_8 & = \frac{1}{\varepsilon} \int_{t_n}^t \langle \dot{\flot}_N(t'), u(t,\flot_N^\varepsilon(t_n)\rangle \, \ed t' \\
    & \leq  \frac{1}{\varepsilon} \int_{t_n}^t \| \dot{\flot}_N(t') \| \| u(t,\flot_N^\varepsilon(t_n) )\|\, \ed t' \\
    & \leq \frac{\tau}{\varepsilon} \left( \mathcal{E}_\varepsilon(t_n,\flot_N)  - \mathrm{min}\, \mathcal{U} +\frac{1}{2}\| u\|^2_{L^\infty([0,T]\times M)}\right) \\
    & \leq \frac{\tau}{\varepsilon} \left( \mathcal{E}_\varepsilon(0,\flot_N)  +C_3 \right) \,,
\end{aligned}
\end{equation}
where we used the conservation-dissipation of the energy $\mathcal{E}_\varepsilon$ \eqref{eq:energystab} for the last two inequalities,  and where $C_3\coloneqq \| u\|^2_{L^\infty([0,T]\times M)}/2- \mathrm{min}\, \mathcal{U}$. 

Using the same argument as for $J_7$, we obtain
\begin{equation}\label{eq:Hn}
\begin{aligned}
|H^n(t)| \leq & {|\mathrm{Lip}_T(U'(\tilde{\rho}))|^2}\varepsilon+ \frac{W_2^2(\rho_N(t),\rho_N^\varepsilon(t_n))}{4\varepsilon} \\
\leq & C_4 \varepsilon + \frac{1}{2} \mathcal{E}_{\rho,u}(t,\flot_N)\,,
\end{aligned}
\end{equation}
where $C_4\coloneqq  {|\mathrm{Lip}_T(U'(\tilde{\rho}))|^2}$.

This last inequality allows us to include $H^n(t)$ in the Gr\"onwall argument. In particular, let $E^n(t)\coloneqq \mathcal{E}_{\rho,u}(t,\flot_N) + H^n(t)$. Combining the  estimates \eqref{eq:J16}, \eqref{eq:J7}, \eqref{eq:J8bound}, into \eqref{eq:dertotal}, we find
\[
\frac{\ed}{\ed t} E^n(t) \leq (C_1+C_2 ) \mathcal{E}_{\rho,u}(t,\flot_N) + \frac{C_2}{2} \varepsilon + (C_3 + \mathcal{E}_\varepsilon(0,\flot_N) ) \frac{\tau}{\varepsilon}\,.
\]
Adding and subtracting $ 2 (C_1+C_2 ) H^n(t)$, using the bound \eqref{eq:Hn} and rearranging terms, this implies
\[
\begin{aligned}
\frac{\ed}{\ed t} E^n(t) \leq & 2 (C_1+C_2 ) E^n(t) + (\frac{C_2}{2} + 2 (C_1+C_2 )C_4) \varepsilon +  (C_3 + \mathcal{E}_\varepsilon(0,\flot_N) ) \frac{\tau}{\varepsilon} \\ 
\eqqcolon & C_5 E^n(t) + C_6 \varepsilon +(C_3 + \mathcal{E}_\varepsilon(0,\flot_N) )  \frac{\tau}{\varepsilon}  \,.
\end{aligned}
\]
Applying Gr\"onwall inequality over the interval $[t_n,s]$ with $t_n<s <t_{n+1}$, we obtain
\[
E^n(t_{n+1}^-) \coloneqq \lim_{s\rightarrow t_{n+1}^-}  E^n(s) \leq  (E^n(t_n)+ C_6  \varepsilon \tau +  (C_3 + \mathcal{E}_\varepsilon(0,\flot_N) ) \frac{\tau^2}{\varepsilon}  ) \exp(C_5\tau)\,.
\]
In order to apply a discrete Gr\"onwall inequality, we need to replace the left-hand side with $E^{n+1}(t_{n+1})\coloneqq \mathcal{E}_{\rho,u}(t_{n+1},\flot_N) + H^{n+1}(t_{n+1})$.  This is indeed possible, since by definition of $\flot_N^\varepsilon(t_{n+1})$ and continuity of $\rho, {\rho}_N$ and $\flot_N^\varepsilon $ we have 
\begin{equation}\label{eq:fplush}
\begin{aligned}
\mathcal{F}_{\varepsilon,\barr{\rho}}(t_{n+1},\flot_N) + H^{n+1}(t_{n+1}) & =
 \frac{ \| \flot_N(t_{n+1}) - \flot_N^\varepsilon(t_{n+1})\|^2 }{2\varepsilon} +\mathcal{U}({\rho}^\varepsilon_N(t_{n+1}) )  - \mathcal{U}(\barr{\rho}(t_{n+1})) \\ & \quad- \int_{\mathbb{R}^d}  U'(\tilde{\rho}(t_{n+1})) \ed ({\rho}_N(t_{n+1})-\textbf{1}_M \barr{\rho}(t_{n+1})) \\
 & \leq \frac{\| \flot_N(t_{n+1}) - \flot_N^\varepsilon(t_{n})\|^2 }{2\varepsilon} +\mathcal{U}({\rho}^\varepsilon_N(t_{n}) )  - \mathcal{U}(\barr{\rho}(t_{n+1})) \\ & \quad- \int_{\mathbb{R}^d}  U'(\tilde{\rho}(t_{n+1})) \ed ({\rho}_N(t_{n+1})- \textbf{1}_M \barr{\rho}(t_{n+1})) \\
 & = \mathcal{F}_{\varepsilon,\barr{\rho}}(t_{n+1}^-,\flot_N) + H^{n}(t_{n+1}^-) \,.
\end{aligned} 
\end{equation}
Hence we get
\[
E^{n+1}(t_{n+1}) \leq  (E^n(t_n)+ C_6  \varepsilon \tau +  (C_3 + \mathcal{E}_\varepsilon(0,\flot_N) ) \frac{\tau^2}{\varepsilon}  ) \exp(C_5 \tau)\,.
\]

\begin{remark}\label{rem:relentropy} Note that the quantity 
\begin{equation}\label{eq:fplushdef}
 \mathcal{F}_{\varepsilon,\barr{\rho}}(t,\flot_N) + H^{n}(t)\,, \quad \text{for } t\in [t_n, t_{n+1})\,,
\end{equation} 
  can be regarded as a different approximation of the relative internal energy of the continuous setting \eqref{eq:relpot}. Using this quantity instead of simply $\mathcal{F}_{\varepsilon,\barr{\rho}}(t,\flot_N)$ allows us to relate the estimates across different time steps as in equation \eqref{eq:fplush} wihtout having to deal with the discontinuities in time of $\rho_N^\varepsilon$. Nonetheless, the sum in  \eqref{eq:fplushdef} is not positive in general, which is why we define the relative internal energy by $\mathcal{F}_{\varepsilon,\barr{\rho}}(t,\flot_N)$ only.
\end{remark}

\paragraph{\textbf{Step 4: Discrete Gr\"onwall's argument}}
Since $t_{N_T} = \tau N_T =  T$, we obtain 
\[
E^{N_T}(T) \leq E^0(0) \exp(C_5T)+ (C_6 \varepsilon  +  (C_3 + \mathcal{E}_\varepsilon(0,\flot_N) ) \frac{\tau}{\varepsilon}  ) \frac{\exp(C_5 (T+\tau)) -1}{C_5}\,.
\]
Using once again equation \eqref{eq:Hn}, this implies

\[
\begin{aligned}
\mathcal{E}_{\rho,u}(T,\flot_N) \leq & (\mathcal{E}_{\rho,u}(0,\flot_N) + H^0(0))\exp(C_5T) \\&+ (C_6  \varepsilon  + (C_3 + \mathcal{E}_\varepsilon(0,\flot_N) ) \frac{\tau}{\varepsilon}  ) \frac{\exp(C_5 (T+\tau)) -1}{C_5}\\ 
& \quad + \frac{1}{2} \mathcal{E}_{\rho,u}(T,\flot_N)  + C_4 \varepsilon\,.
\end{aligned}
\]
Hence, we get 
\begin{equation}\label{eq:finalestimate}\begin{aligned}
\mathcal{E}_{\rho,u}(T,\flot_N) \leq &  2 (\mathcal{E}_{\rho,u}(0,\flot_N) + H^0(0))\exp(C_5T) \\&+ 2(C_6  \varepsilon  + (C_3 + \mathcal{E}_\varepsilon(0,\flot_N) ) \frac{\tau}{\varepsilon}  ) \frac{\exp(C_5 (T+\tau)) -1}{C_5}+2 C_4 \varepsilon\,.\\ 
\end{aligned}
\end{equation}

In order to conclude the proof we need to estimate the initial energy $\mathcal{E}_\varepsilon(0,\flot_N)$ and the quantity $\mathcal{E}_{\rho,u}(0,\flot_N)  + H^0(0)$.  Note that, due to the initial conditions \eqref{eq:disceuler}
\[
\begin{aligned}
\mathcal{E}_\varepsilon(0,\flot_N) & = \sum_{i=1}^N \frac{1}{2} |\dot{\flot}_N^i(0)|^2 \rho_0[P_i] + \mathcal{F}_\varepsilon(\flot_N(0))\\
& \leq \frac{1}{2}\| u(0) \|^2_{L^\infty(M)}+ \mathcal{U}(\rho(0)) + \frac{W^2_2(\rho(0),\rho_N(0))}{2\varepsilon}\\
& \leq C_3 + \mathcal{U}(\rho(0)) + \frac{\delta_N^2}{2\varepsilon}
\,,
\end{aligned}
\]
where  $\delta_N$ is the error in the initial conditions in the Wasserstein distance, i.e. 
\begin{equation}\label{eq:deltaN}
\delta_N \coloneqq W_2(\rho_N(0),\barr{\rho}(0))\,.
\end{equation}
%

In order to bound the quantity $\mathcal{E}_{\rho,u}(0,\flot_N)  + H^0(0)$, we first estimate the term
\begin{multline*}
\mathcal{F}_{\rho}(0,\flot_N)  + H^0(0) = \frac{W^2_2(\rho_N(0),{\rho}_N^\varepsilon(0))}{2 \varepsilon} \\+ \mathcal{U}(\rho^\varepsilon_N(0)) -\mathcal{U}(\barr{\rho}(0)) - \int_{\mathbb{R}^d} U'(\tilde{\rho}(0)) \ed (\rho_N(0) - \textbf{1}_M \barr{\rho}(0))\,.
\end{multline*}
By definition of $\rho^\varepsilon_N(0)$ we find 
\[
\frac{W^2_2(\rho_N(0),{\rho}_N^\varepsilon(0))}{2 \varepsilon} + \mathcal{U}(\rho^\varepsilon_N(0)) -\mathcal{U}(\barr{\rho}(0)) \leq \frac{W^2_2(\rho_N(0),\barr{\rho}(0))}{2 \varepsilon}\,.
\]
Moreover,
\[
\begin{aligned}
\left|\int_{\mathbb{R}^d} U'(\tilde{\rho}(0)) \ed (\rho_N(0) - \textbf{1}_M \barr{\rho}(0)) \right|   &\leq \mathrm{Lip}(U'(\barr{\rho}(0))) W_1(\rho_N(0) , \barr{\rho}(0))\\
& \leq \frac{C_0^2}{2}  \varepsilon + \frac{W_2^2(\rho_N , {\rho}_N(0))}{2\varepsilon},
\end{aligned}
\]
where $C_0 \coloneqq \mathrm{Lip}(U'(\barr{\rho}(0)))$. 
Combining the two estimates and recalling \eqref{eq:deltaN} we get
\begin{equation}\label{eq:relativeenergy0}
\mathcal{F}_{\rho}(0,\flot_N)  + H^0(0) \leq \frac{C_0^2}{2}  \varepsilon +  \frac{W^2_2(\rho_N(0),\barr{\rho}(0))}{ \varepsilon} =   \frac{C_0^2}{2}  \varepsilon +  \frac{\delta_N^2}{ \varepsilon} \,.
\end{equation}
The remaining terms in the relative energy $\mathcal{E}_{\barr{\rho},u}(0,\flot_N)$ can be estimated by the fact that $\mathcal{K}(0,\flot_N) =0$ (due to the initial conditions \eqref{eq:disceuler}) and the bound
\begin{equation}\label{eq:projectionbound}
\delta_N = W_2(\rho_N(0),\barr{\rho}(0)) = {\|P_{\mathbb{X}_N}\mathrm{Id} - \mathrm{Id}\|} \leq \sqrt{\rho_0[M]} h_N\,,
\end{equation}
which follows from definition of $h_N$.

We conclude by replacing the estimates above into equation \eqref{eq:finalestimate} and estimating the constants using Lemma \ref{lem:continuityextension}.
\end{proof}

We now turn to the proof of Theorem \ref{th:convsemigradient}. We will focus only on the differences with the proof of Theorem \ref{th:convsemieuler}. In particular the kinetic energy will not be taken into account in the definition of the energy.

\begin{proof}[Proof of Theorem \ref{th:convsemigradient}] 
The proof follows the same line as the one of Theorem \ref{th:convsemieuler}. We denote by $\tilde{\rho}$ and $\tilde{u}$ the extensions of $\rho$ and $\barr{u} \coloneqq -\nabla U'(\barr{\rho})$ constructed via Lemma \ref{lem:continuityextension}. In particular, note that $\tilde{u}\neq-\nabla U'(\tilde{\rho})$ outside the domain. In the case where $|U'''_+(0)|<\infty$, we also extend $U$ as a $C^3$ function on $\mathbb{R}$ as in the proof of Theorem \ref{th:convsemieuler}.

Then we take as relative energy 
\begin{equation}\label{eq:modenergygradient}
\mathcal{Z}_{\barr{\rho}}(t,\flot_N) \coloneqq \mathcal{F}_{\varepsilon,\barr{\rho}}(t,\flot_N) + \frac{1}{2}\| \flot_N(t) - \flot(t) \|^2\,.
\end{equation}
By equation \eqref{eq:derpotential}, the time derivative of  $\mathcal{Z}_{\barr{\rho},\barr{u}}(t,\flot_N)$ satisfies
\begin{equation}\label{eq:dertotalgrad0}
\frac{\ed}{\ed t} \mathcal{Z}_{\rho}(t,\flot_N) + \frac{\ed}{\ed t} H^n(t) = \sum_{i=4}^8 J_i -\langle  \dot{\flot}_N(t)+\nabla U'(\tilde{\rho}(t, \flot_N(t))), \dot{\flot}_N(t) - \tilde{u}(t,\flot_N(t)) \rangle\,,
\end{equation}
where the terms $H^n(t)$ and $J_i$ are defined as in equation \eqref{eq:Hndef} and \eqref{eq:dertotal}, respectively. Adding and subtracting $\tilde{u}(t,X_N(t))$ and $\nabla U(\tilde{\rho}(t,X(t))$ in the last term we obtain
\begin{equation}\label{eq:dertotalgrad}
\frac{\ed}{\ed t} \mathcal{Z}_{\rho}(t,\flot_N) + \frac{\ed}{\ed t} H^n(t) +2 \mathcal{K}_{\tilde{u}}(t,\flot_N) = \sum_{i=4}^{10} J_i \,,
\end{equation}
where
\[
J_9:= \langle  \tilde{u}(t,X(t)) - \tilde{u}(t,X_N(t)), \dot{\flot}_N(t) - \tilde{u}(t,\flot_N(t)) \rangle \,,
\]
\[
J_{10}:= \langle  \nabla U (\tilde{\rho}(t,X(t))) -\nabla U (\tilde{\rho}(t,X_N(t))), \dot{\flot}_N(t) - \tilde{u}(t,\flot_N(t)) \rangle \,.
\]

 The estimates for the terms $J_i$ are analogous to those in the proof of Theorem \ref{th:convsemieuler}. In particular, we obtain
\[
\sum_{i=4}^6 J_i \leq \mathcal{K}_{\varepsilon, \tilde{u}}(t,\flot_N) + C_1 \mathcal{Z}_{\barr{\rho}}(t,\flot_N) \,,
\]
where now $C_1 \coloneqq 1+A' \mathrm{Lip}_T(\tilde{u})$, and as in the previous proof $A' \coloneqq \max(2,A)$. The terms $J_7$ and $H^n$ are estimated as in equations \eqref{eq:J7} and \eqref{eq:Hn}, respectively, with the same constants $C_2$ and $C_4$. For $J_8$, proceeding as in \eqref{eq:J8bound}, we obtain
\begin{equation}\label{eq:J8grad}
\begin{aligned}
J_8 & \leq  \frac{1}{\varepsilon} \int_{t_n}^t \left(\frac{1}{2}\| \dot{\flot}_N(t') \|^2 + \frac{1}{2}\| u(t,\flot_N^\varepsilon(t_n) )\|^2 \right)\, \ed t' \\
    & \leq \frac{1}{2\varepsilon}(\frac{\|\flot_N(t_n)-\flot_N^\varepsilon(t_n)\|^2}{2\varepsilon}-\frac{\|\flot_N(t_{n+1})-\flot_N^\varepsilon(t_n)\|^2}{2\varepsilon})+\frac{\tau}{2\varepsilon} \| u\|^2_{L^\infty([0,T]\times M)}\\
   & \leq \frac{1}{2\varepsilon}\left(\mathcal{F}_\varepsilon(\flot_N(t_n))-\mathcal{F}_\varepsilon(\flot_N(t_{n+1}))\right)+\frac{\tau}{2\varepsilon} \| u\|^2_{L^\infty([0,T]\times M)}\\
 & \eqqcolon \frac{\Delta^n}{2\varepsilon}  +  C_3 \frac{\tau}{\varepsilon} \,,
\end{aligned}
\end{equation}
where we used the equation
 \[
\|\dot{\flot}_N(t)\|^2 = -\frac{\ed}{\ed t} \frac{\|\flot_N(t)-\flot_N^\varepsilon(t_n)\|^2}{2\varepsilon}
\] 
to pass from the first to the second line, and the inequality 
\[
\frac{\|\flot_N(t_{n+1})-\flot_N^\varepsilon(t_n)\|^2}{2\varepsilon} + \mc{U}(\rho_N^\varepsilon(t_n)) \geq \mathcal{F}_\varepsilon(\flot_N(t_{n+1}))
\]
 to pass from the second to the third line. Finally, the last two terms are estimated as follows
 \[
 \begin{aligned}
 J_9+J_{10}  &\leq \frac{1}{2}\mc{K}_{\varepsilon,\tilde{u}}(t,X_N(t)) + 2(\mathrm{Lip}_T(\tilde{u})+\mathrm{Lip}_T(\nabla U'(\tilde{\rho})) \| X_N(t) - X(t)\|^2\\
 &\eqqcolon \frac{1}{2}\mc{K}_{\varepsilon,\tilde{u}}(t,X_N(t)) + C_5 \mathcal{Z}_\rho(t,X_N)\,.
 \end{aligned}
 \]

Introducing $Z^n(t)\coloneqq  \mathcal{Z}_{\rho}(t,\flot_N) +  H^n(t)$, and proceding as in the previous proof, we obtain 
\[
\begin{aligned}
\frac{\ed}{\ed t} Z^n(t)  +  \frac{1}{2} \mathcal{K}_{\varepsilon, \tilde{u}}(t,\flot_N) & \leq  2 (C_1+C_2 +C_5) Z^n(t) + (\frac{C_2}{2} + 2 (C_1+C_2 +C_5 )C_4) \varepsilon \\ & \quad + \frac{\Delta^n}{2\varepsilon} + C_3 \frac{\tau}{\varepsilon} \\ 
&\eqqcolon  C_6 Z^n(t) + C_7 \varepsilon +C_3 \frac{\tau}{\varepsilon} + \frac{\Delta^n}{2\varepsilon} \,.
\end{aligned}
\]
Therefore, by the same reasoning as above
\[
\begin{aligned}
\mathcal{Z}_{\rho}(T,X) + \frac{1}{2} \int_0^T \mathcal{K}_{\varepsilon, \tilde{u}}(t,\flot_N) \leq & 2 (\mathcal{Z}_{\rho,u}(0,\flot_N)  + H^0(0)  +C_7  \varepsilon  + C_3 \frac{\tau}{\varepsilon} )\exp(C_6 T) \\  &+ \frac{\tau}{\varepsilon} (\mathcal{F}_\varepsilon({\flot_N(0)}) - \mathcal{F}_\varepsilon({\flot_N(T)})) \exp(C_6 T) + 2 C_4 \varepsilon\,.
\end{aligned}
\]
However, note that
\[
\mathcal{F}_\varepsilon({\flot_N(0)}) - \mathcal{F}_\varepsilon({\flot_N(T)}) \leq \mathcal{F}_\varepsilon({\flot_N(0)}) - \min \mc{U} \leq \mathcal{U}(\rho(0)) - \min \mathcal{U} + \frac{W^2(\rho(0),\rho_N(0))}{2\varepsilon} \,.\]
We conclude the proof using \eqref{eq:relativeenergy0} and \eqref{eq:projectionbound} to bound this latter term as well as $\mathcal{Z}_{\rho}(0,\flot_N)  + H^0(0)$, and using Lemma \ref{lem:continuityextension} to bound the constants in the final estimate.
\end{proof}

\begin{remark}\label{rem:deltaN}
Note that the dependency of our estimates on $h_N$ is only due to the bound \eqref{eq:projectionbound}. In particular, the error estimates in Theorem \ref{th:convsemieuler} and   \ref{th:convsemigradient} hold also replacing $h_N$ with $\delta_N$. 
\end{remark}

\begin{remark}
We observe that we can obtain similar convergence estimates also on the Lagrangian velocity as it can be easily verified with the following triangular inequality
\begin{equation}\label{eq:lagvelocity}
\begin{aligned}
\| \dot{\flot}_N(t) - \dot{ \flot}(t)\|_{\mathbb{X}} & \leq  \| \dot{\flot}_N(t) - \tilde{u}(t, \flot_N(t))\|_{\mathbb{X}} + \| \tilde{u}(t, \flot_N(t))- \tilde{u}(t, \flot(t))\|_{\mathbb{X}}\\
 & \leq \sqrt{2 \mathcal{K}_{\tilde{u}}(t,\flot_N)} +\mathrm{Lip}_T(\tilde{u})\|\flot_N(t) - \flot(t)\|_{\mathbb{X}}\,.
 \end{aligned}
\end{equation}
\end{remark}

\begin{remark} The regularity of the exact solutions required in Theorem \ref{th:convsemieuler} and \ref{th:convsemigradient} is chosen in order to apply the extension Lemma \ref{lem:continuityextension}. However, examining the constants appearing in the estimates above, one can see that this is stronger than what is actually required from the extended variables themselves. For example, one can check that the proof still holds if $u$ is of class $C^{1,1}$  on $[0,T]\times M$ with $C^{1,1}$ divergence in space, uniformly in time, and admits an extension $\tilde{u}$ with the same regularity. If $M$ is sufficiently regular, say simply connected  with  a smooth boundary, such an extension can be constructed using Fefferman's extension theorem \cite{fefferman2009extension} as in Lemma \ref{lem:continuityextension} but applied to the potentials obtained via the Helmholtz decomposition of $u$. 
\end{remark}
\section{Implementation}

\subsection{Computation of the Moreau-Yosida regularization}
In this section we describe how the schemes \eqref{eq:disceulertime} and \eqref{eq:discgradienttime} can be implemented. In particular, we show that computing  the gradient vector field driving the dynamics amounts to solving a semi-discrete optimal transport problem at each time step.

\begin{definition}[Laguerre diagram]
The Laguerre diagram of $(x_1,\ldots,x_N)\in  (\mathbb{R}^d)^N$  with weights $(w_1,\ldots,w_N) \in \mathbb{R}^N$ is a decomposition of $M$ into $N$ subsets  $(L_i)_i$ defined by 
\[
L_i \coloneqq\{ x\in M ~|~ \forall j\in\{1,\ldots,N\},  |x-x_i|^2+w_i  \leq |x-x_j|^2+w_j\}\,.
\]
\end{definition}

In the following we will identify $\mathbb{X}_N$ with $(\mathbb{R}^d)^N$, i.e.\ we regard an element $\flot_N \in \mathbb{X}_N$ as the collection of the particle positions $(\flot_N^i)_i \in (\mathbb{R}^d)^N$. With this identification, the functional $\mathcal{F}_\varepsilon$ can be interepreted as a function on $(\mathbb{R}^d)^N$, and its gradient at a given point as a vector in $(\mathbb{R}^d)^N$. Let us introduce the set
\[
\mathcal{D}_N \coloneqq \{ (x_1,\ldots, x_N) \in (\mathbb{R}^d)^N ~|~ x_i=x_j\, \text{ for some }\, i\neq j\}\,.
\]

In the following proposition we collect the results of Proposition 11 and 13 in \cite{sarrazin2020lagrangian} adapted to our setting. It gives the explicit expression of the regularized density and the gradient of the regularized energy appearing in the time-continuous schemes given by \eqref{eq:disceuler} and \eqref{eq:discgradient}.
\begin{proposition}\label{prop:gradient}
Let $X=(x_1,\ldots,x_N)\in (\mathbb{R}^d)^N\setminus \mc{D}_N$, and set $\rho_N = \sum_i \rho_0[P_i] \delta_{x_i}$. Then the unique minimizer $\rho_N^\varepsilon$ of problem satisfies 
\[
\rho_N^\varepsilon(x) = (2\varepsilon U')^{-1}((w_i-|x-x_i|^2) \vee U'(0)) \,,\quad \forall \,x\in L_i\,,
\]
where $(L_i)_i$ is the Laguerre diagram associated with the positions $(x_1,\ldots,x_N)$ and the weights $(w_1,\ldots,w_N)$, which are uniquely defined up to an additive constant by the condition $\rho_N^\varepsilon[L_i] = \rho_0[P_i]$. Moreover, $\mathcal{F}_\varepsilon$ interpreted as a function on $(\mathbb{R}^d)^N$ is  continuously differentiable on $(\mathbb{R}^d)^N\setminus \mc{D}_N$ and
\[
\nabla_{x_i} \mc{F}_\varepsilon(X) = \rho_0[P_i] \frac{x_i-b_i(X)}{\varepsilon}\,,\quad b_i(X) \coloneqq \frac{1}{\rho_0[P_i]} \int_{L_i} x\rho_N^\varepsilon \ed x\,. \]
\end{proposition}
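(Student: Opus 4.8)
The plan is to pass to the Eulerian picture via Lemma~\ref{lem:eulerianmy} and then read off both the density $\rho_N^\varepsilon$ and its first‑order dependence on the particle positions from the structure of semi‑discrete optimal transport, following Propositions~11 and~13 of \cite{sarrazin2020lagrangian}. Fix $X=(x_1,\dots,x_N)\notin\mathcal{D}_N$, set $m_i\coloneqq\rho_0[P_i]$ and $\rho_N=\sum_i m_i\delta_{x_i}$. By Lemma~\ref{lem:eulerianmy}, $\mathcal{F}_\varepsilon(X)=\mathcal{U}_\varepsilon(\rho_N)$; the unique minimiser $\rho_N^\varepsilon$ of \eqref{eq:mye} equals $(X^\varepsilon)_\#\rho_0$ for every minimiser $X^\varepsilon$ of \eqref{eq:my}; and $X^\varepsilon$ carries $P_i$ into a set $L_i$ of $\rho_N^\varepsilon$‑mass $m_i$ while pushing $\rho_0|_{P_i}$ onto $\rho_N^\varepsilon|_{L_i}$. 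To obtain the explicit form I would write (formally) the Euler--Lagrange equation for $\rho_N^\varepsilon$: perturbing it along flows of vector fields tangent to $\partial M$, exactly as in the proof of Lemma~\ref{lem:gradientF} (cf.\ inequality \eqref{eq:ineqpres}) and using $P'(r)=rU''(r)$ from \eqref{eq:thermodynamic}, one gets $\nabla\big(U'(\rho_N^\varepsilon)\big)=(x_i-\mathrm{Id})/\varepsilon$ a.e.\ on $L_i\cap\{\rho_N^\varepsilon>0\}$, using that the optimal map $\nabla\psi$ from $\rho_N^\varepsilon$ to $\rho_N$ provided by Lemma~\ref{lem:eulerianmy} is constant and equal to $x_i$ on $L_i$. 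Integrating this identity, $U'(\rho_N^\varepsilon)$ is an affine function of $|\cdot-x_i|^2$ on each cell, so $\rho_N^\varepsilon$ equals $(2\varepsilon U')^{-1}$ of a profile of the form $w_i-|\cdot-x_i|^2$, truncated from below at the edge of the range of $U'$ as in the statement; and continuity of $\rho_N^\varepsilon$ across the interface of two cells reads exactly as the Laguerre bisector relation $|\cdot-x_i|^2+w_i=|\cdot-x_j|^2+w_j$, which identifies the $L_i$ with the Laguerre cells of the weights $w_i$. Since $(2\varepsilon U')^{-1}$ is continuous and these profiles are bounded on the bounded sets $L_i\subset M$, $\rho_N^\varepsilon$ is a bounded continuous density on $M$, and the conditions $\rho_N^\varepsilon[L_i]=m_i$ — which are the optimality conditions for the weights — determine the $w_i$ up to a common additive constant. (Alternatively, this entire step is \cite[Prop.~11]{sarrazin2020lagrangian}, whose Kantorovich‑duality/min--max proof can be quoted.)

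For the gradient, by Lemma~\ref{lem:eulerianmy} and the description above we have, for $X\in\mathbb{X}_N$ and any minimiser $X^\varepsilon$ of \eqref{eq:my},
\[
\mathcal{F}_\varepsilon(X)=\frac{1}{2\varepsilon}\sum_i\int_{P_i}|x_i-X^\varepsilon|^2\rho_0\,\ed x+\mathcal{F}(X^\varepsilon),
\]
whose right-hand side, for fixed $X^\varepsilon$, equals a fixed positive‑definite quadratic form in $X$ plus an affine function of $X$. Hence $\mathcal{F}_\varepsilon$ minus that quadratic is an infimum of affine functions, i.e.\ concave, so $\mathcal{F}_\varepsilon$ is semiconcave on $(\mathbb R^d)^N$, in particular locally Lipschitz. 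Its superdifferential at $X$ is then computed by the envelope theorem: each attained $X^\varepsilon$ contributes the vector whose $i$‑th block is $\varepsilon^{-1}\nabla_{x_i}\big(\tfrac12\int_{P_i}|x_i-X^\varepsilon|^2\rho_0\,\ed x\big)=\varepsilon^{-1}\big(m_i x_i-\int_{P_i}X^\varepsilon\rho_0\,\ed x\big)$, and by the push‑forward property $\int_{P_i}X^\varepsilon\rho_0\,\ed x=\int_{L_i}x\,\rho_N^\varepsilon\,\ed x$, so this block equals $m_i\varepsilon^{-1}(x_i-b_i(X))$ with $b_i$ as in the statement. The one point still to check is that this superdifferential is a singleton: although $X^\varepsilon$ is not unique, $\rho_N^\varepsilon$ is (strict convexity and superlinearity of $U$) and the Laguerre cells $L_i$ are unique up to a Lebesgue‑ (hence $\rho_N^\varepsilon$‑) negligible set, so $b_i(X)$ is unambiguous. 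A semiconcave function that is differentiable at every point of the open set $(\mathbb R^d)^N\setminus\mathcal{D}_N$ is $C^1$ there, its gradient being the single, hence continuous, element of the upper semicontinuous superdifferential; this is the assertion.

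The main obstacle is the Eulerian regularity statement for $\rho_N^\varepsilon$ — the Laguerre‑cell structure, the boundedness and continuity of the density, the characterisation of the weights by mass balance, and the min--max exchange that makes the pointwise minimisation rigorous — which is exactly \cite[Prop.~11]{sarrazin2020lagrangian}. Granting it, the remaining work is just the bookkeeping needed to go from uniform to $\rho_0$‑weighted discrete measures and the short envelope‑theorem computation, whose only genuine subtlety is the singleton‑superdifferential observation above.
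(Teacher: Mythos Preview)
Your proposal is correct and follows essentially the same route as the paper: the paper does not give an independent proof of this proposition but simply records it as an adaptation of Propositions~11 and~13 in \cite{sarrazin2020lagrangian}. Your write-up expands on what that citation contains --- the Euler--Lagrange/duality derivation of the cellwise profile and the envelope/semiconcavity argument for the gradient --- which is more than the paper itself provides, but it is the same underlying argument.
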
 

\begin{remark}[Power energies] If the energy is defined by the power function
\[
U(r) = \frac{r^m}{m-1}\,,
\]
for $m>1$, then the minimizer $\rho_N^\varepsilon$ has the following form:
\[
\rho_N^\varepsilon(x) = \left[ \left(\frac{m-1}{m }\right) \frac{(w_i - |x-x_i|^2)_+}{2\varepsilon}\right]^{\frac{1}{m-1}} \quad \forall \,x\in L_i\,.
\]
\end{remark}

Actually, in order to compute the solutions of the fully-discrete scheme, we do not need the expression for the  gradient in Proposition \ref{prop:gradient}, but we just need to identify $P_{\mathbb{X}_N} X_N^\varepsilon(t_n)$ in \eqref{eq:disceulertime} and \eqref{eq:discgradienttime}. For this, assume that $(X_N^i(t_n))_i\in (\mathbb{R}^d)^N\setminus \mc{D}_N $ and let $(L_i)_i$ be the Laguerre diagram associated with $\rho_N^\varepsilon(t_n)$. Then, for any $Y_N \in \mathbb{X}_N$, we have
\[
\int_{S_0} X_N^\varepsilon(t_n) \cdot Y_N \rho_0 \, \ed x = \sum_i Y_N^i \cdot \int_{P_i} X_N^\varepsilon(t_n) \rho_0 \,\ed x= \sum_i Y_N^i \cdot \int_{L_i} x \rho_N^\varepsilon(t_n)\,\ed x \,.
\]
Therefore,
\[
P_{\mathbb{X}_N} X_N^\varepsilon(t_n)(\omega) = b_i(X_N(t_n)) \, \quad \forall\, \omega \in P_i\,,
\]
where $b_i(X_N(t_n))\in\mathbb{R}^d$ is the barycenter of $\rho_N^\varepsilon(t_n)$ restricted on $L_i$.

\begin{remark}[Initialization by optimal quantization]\label{rem:quantization} The partition $\mathcal{P}_N$ of the support $S_0\subseteq M$ of $\rho_0$ which is required to define the space $\mathbb{X}_N$ (see Section \ref{sec:spacedisc}) can be itself defined as the interesection of a Laguerre diagram $(L_i)_i$ with $S_0$. For instance, assuming the masses to be equal, i.e.\ $m_i = \rho_0[M]/N$ for $i=1,\ldots,N$, one can select the vector of positions $(x_1,\ldots,x_N)\in (\mathbb{R}^{d})^N$ defining the diagram 
to belong to the $\mathrm{argmin}$ of
\[
(y_1,\ldots,y_N)\in (\mathbb{R}^{d})^N \mapsto W_2\bigg(\sum_i \frac{\rho_0[M]}{N} \delta_{y_i}, \rho_0\bigg)\,,
\]
so that  there exists a vector of weights $(w_1,\ldots,w_N)\in \mathbb{R}^N$ such that $\rho_0[L_i]=\rho_0[M]/N$. 
Then one can define the initial conditions $\flot_N(0)$ by $\flot_N(0)|_{L_i} = x_i$, and therefore $\rho_N(0) = \sum_i  \frac{\rho_0[M]}{N}  \delta_{x_i}$. With this choice $\delta_N =W_2(\rho_N(0),\rho_0) \lesssim N^{-1/d}$ (see, e.g., \cite{kloeckner2012approximation}). In view of Remark \ref{rem:deltaN}, this ensures the convergence of the schemes independently of the size of the partion $h_N$.
\end{remark}

\subsection{Time integration and linear potentials} The schemes \eqref{eq:disceuler} and \eqref{eq:discgradient} can be easily generalized to the case when the energy of the system contains an additional linear term of the form 
\[
\int_M V\, \ed \rho\,,
\]
where $V \in C^{1,1}(M)$ is a given function. At the discrete level, it is more convenient to treat this term independently of the Moreau-Yosida regularization, i.e.\ by adding to the discrete energy the term
\begin{equation}\label{eq:linearpot}
\int_{\mathbb{R}^d} \tilde{V}\, \ed \rho_N = \int_M \tilde{V} \circ \flot_N \, \ed \rho_0\,,
\end{equation}
where $\tilde{V}$ is a $C^{1,1}$ extension of $V$, e.g., constructed using Fefferman's extension theorem \cite{fefferman2009extension}. Then, in view of Proposition \ref{prop:gradient} the discrete scheme \eqref{eq:disceuler} would be replaced by
\begin{equation}\label{eq:eulerlinearpot}
\ddot{\flot}^i_N(t) = - \frac{{\flot}^i_N(t)-b_i( {\flot}_N(t_n))}{\varepsilon}-\nabla \tilde{V}(\flot_N^i(t))\,,
\end{equation}
for all $i\in\{1,\ldots,N\}$ and $t\in[t_n,t_{n+1})$, where $b_i$ is defined as in Proposition \ref{prop:gradient}. Therefore for each time-step one needs to:
\begin{enumerate}
\item find the optimal density $\rho_N^\varepsilon(t_n)$ and the associated barycenters $b_i( {\flot}_N(t_n))$: as in \cite{kitagawa2019convergence}, this is done by applying a damped Newton's method to solve the system of optimality conditions  $\rho_N^\varepsilon(t_n)[L_i] = \rho_0[P_i]$ from Proposition \ref{prop:gradient};
\item solve $N$ decoupled systems of ODEs in \eqref{eq:eulerlinearpot}, which can be done explicitly for particular choices of $\tilde{V}$. 
\end{enumerate}
The same holds for the scheme   \eqref{eq:discgradient} upon replacing $\ddot{\flot}^i_N(t)$ by $\dot{\flot}^i_N(t)$.

Finally, note  that even with the additional term \eqref{eq:linearpot}, the proofs of convergence above still apply without modifying the relative energies and with only minor modifications. In particular, the constant in Theorem \ref{th:convsemieuler} and \ref{th:convsemigradient} would additionally depend on $\mathrm{Lip}(\nabla \tilde{V})$.

\section{Numerical tests}

In this section we demonstrate numerically the behavior of the scheme  in terms of convergence with mesh and time-step refinement. The tests presented hereafter correspond to the internal energy/pressure function 
\begin{equation}\label{eq:quadratic}
U(r) = P(r) = r^2 \,,
\end{equation}
for which the Euler equations \eqref{eq:euler} yield the shallow water equations without rotation and the gradient flow \eqref{eq:porous} yields the porous medium equation with a quadratic nonlinearity. Note, however, that in tests below the vector field $\nabla U'(\rho)$ is not Lipschitz (in fact, it is discontinuous at the boundary of the support of $\rho$), so they are outside the limits of applicability of our theorems. 
For all the tests the discrete initial condition are determined by optimal quantization with respect to the Wasserstein distance as in Remark \ref{rem:quantization}. 

For the computation of the Moreau-Yosida regularization, we used the open-source library \texttt{sd-ot}, which is available at \url{https://github.com/sd-ot}.

\subsection{Convergence: porous medium equation}
The porous medium equation \eqref{eq:porous} associated with the energy \eqref{eq:quadratic} admits the following exact solution
\begin{equation}\label{eq:baren}
\rho(t,x) = \frac{1}{\sqrt{t}} \left (C^2 - \frac{1}{16\sqrt{t}} |x|^2 \right)_+
\end{equation}
on any time interval $[t_0,T]$, with $t_0>0$.
Initial conditions are given by optimal quantization of the Barenblatt profile at given time. Equation \eqref{eq:baren} describes the evolution of the so-called Barenblatt profile. The internal energy decays according to
\[
\mathcal{U}(t) = \frac{16 \pi C^6}{3 \sqrt{t}}\,,
\]
whereas the Lagrangian flow is given by
\begin{equation}
\flot(t,x) = x \left (\frac{t}{t_0} \right)^{1/4}\,.
\end{equation}
Here we take $t_0=1/16$, $T=1$ and $C=1/3$, and we monitor the following quantities:
\begin{equation}\label{eq:errorflow}
\Delta \flot \coloneqq \| \flot_N(T) - \flot(T,\flot_N(0)) \|_{\mathbb{X}} \,,\quad \Delta \mathcal{U}\coloneqq | \mathcal{U}_\varepsilon(\rho_N(T))-\mathcal{U}(T) |\,. 
\end{equation}
Note that $\Delta \flot$ is an order one approximation of the $L^2$ distance between the discrete and continuous flows appearing in the convergence estimates.
For a given number of particles $N$, we take $\varepsilon = \sqrt{\tau} = 1/\sqrt{N}$, which implies a rate of convergence of $1/2$ according to Theorem \ref{th:convsemigradient} (see also Remark \ref{rem:quantization}). In Figure \ref{fig:baren} we show the density $\rho_N^\varepsilon$ for fixed $N$ and at different times and the associated Laguerre diagram. Table \ref{tab:baren} collects the errors and the associated convergence rates which confirm our estimate. Figure \ref{fig:barenenergy} shows the time evolution of the internal energy, which decreases monotonically in accordance with the stability estimate \eqref{eq:energystabgrad}.

\begin{figure}
    \begin{center}
        \includegraphics[trim = 350 100 350 0, clip = true, scale=.2]{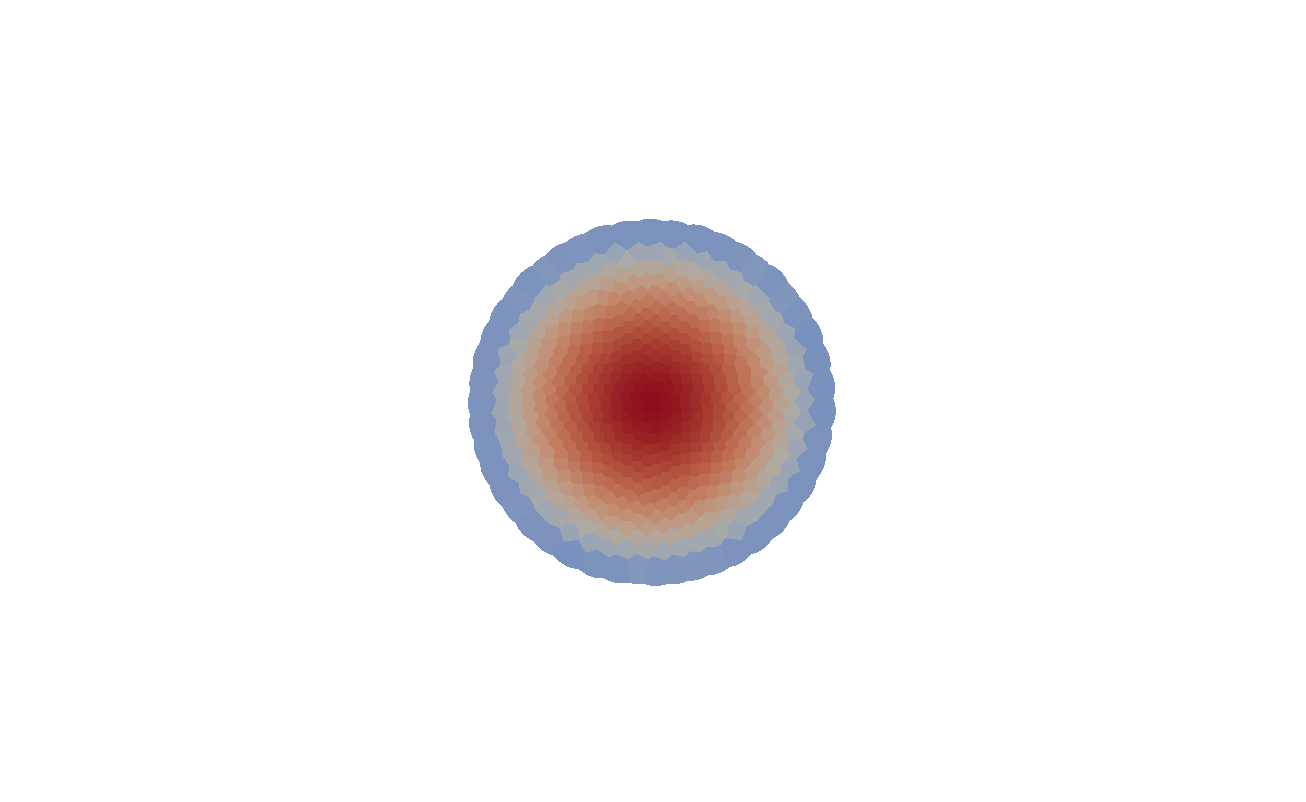}
        \includegraphics[trim = 350 100 350 0, clip = true, scale=.2]{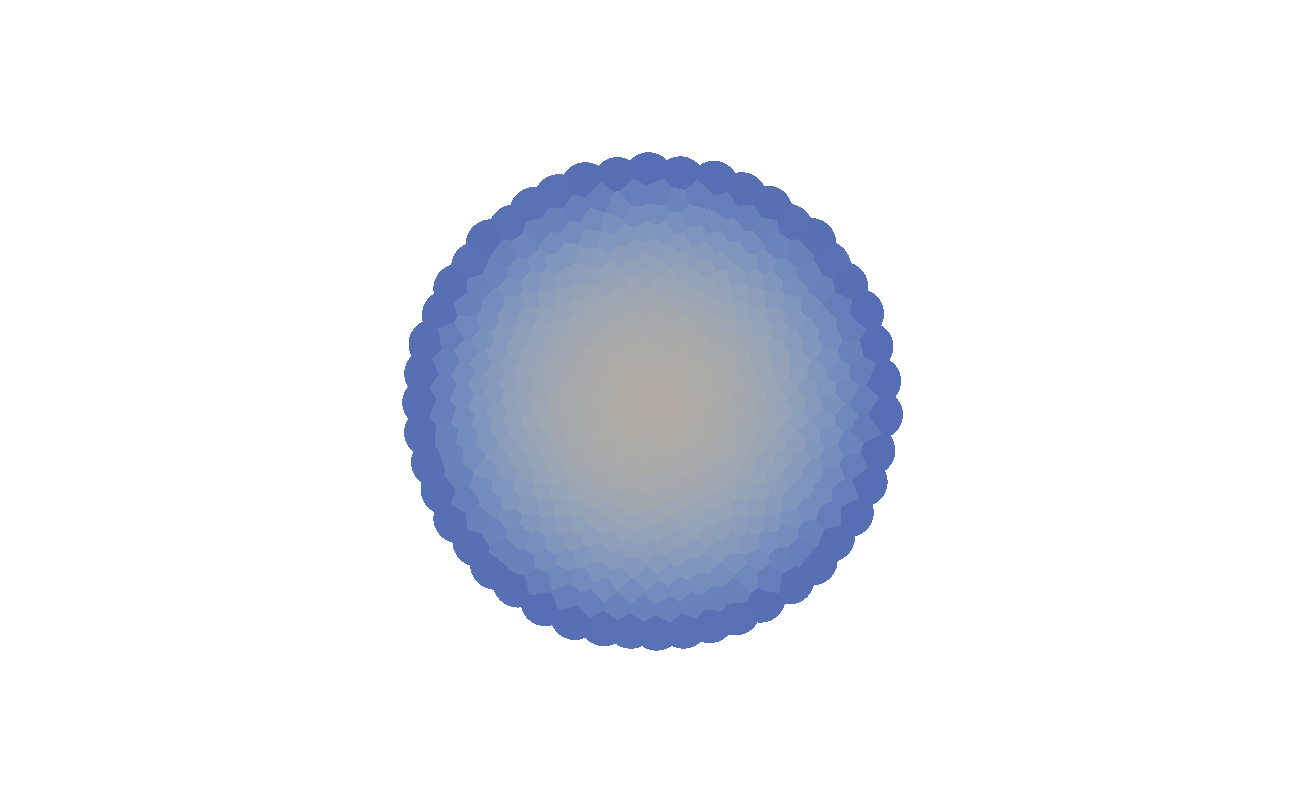}
        \includegraphics[trim = 350 100 150 0, clip = true, scale=.2]{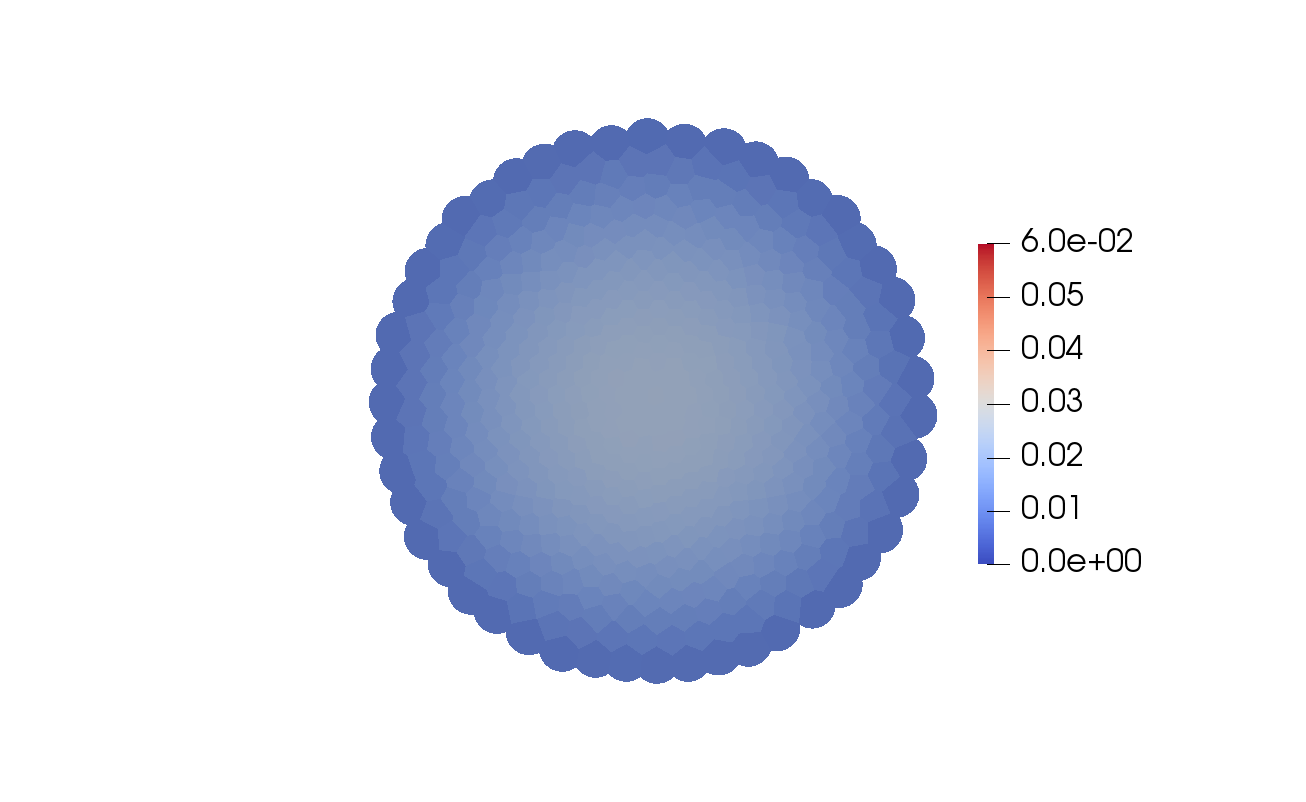}
        
        \includegraphics[trim = 350 0 350 60, clip = true, scale=.2]{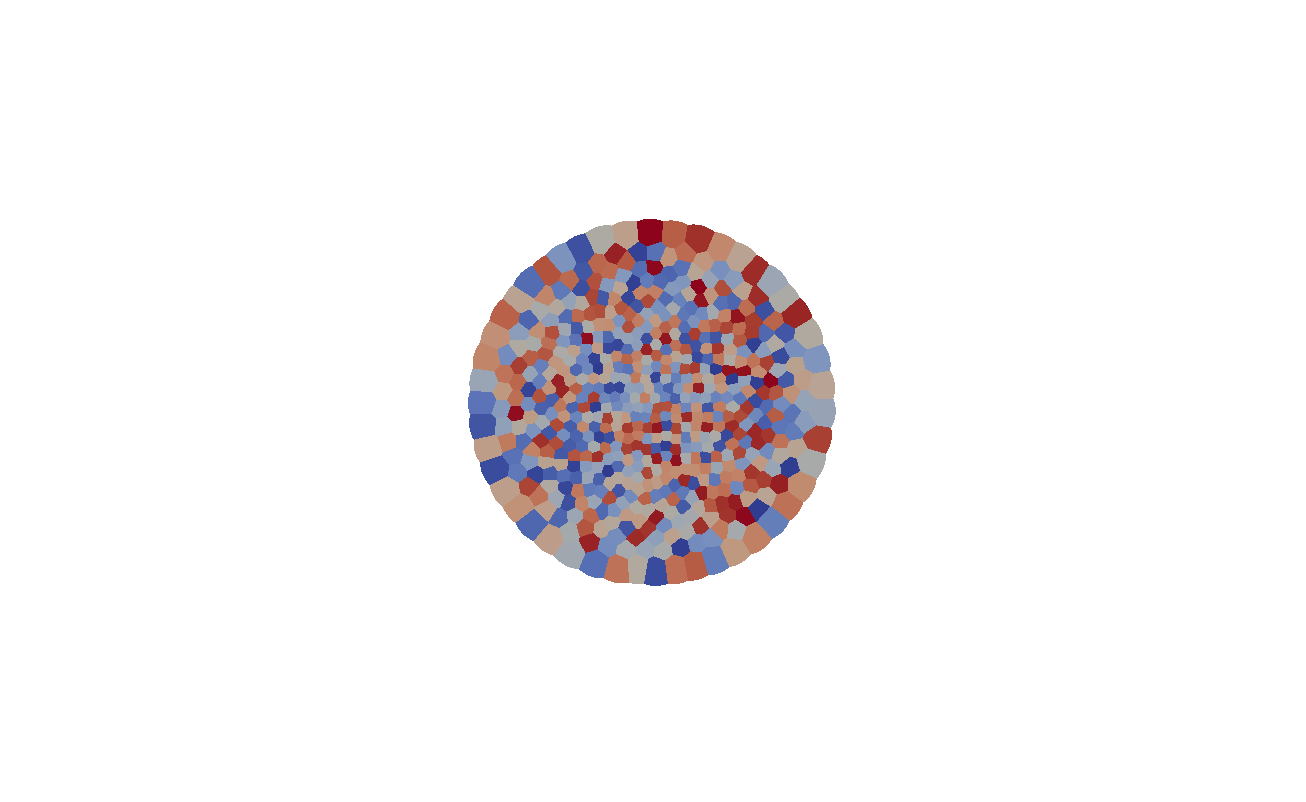}
        \includegraphics[trim = 350 0 350 60, clip = true, scale=.2]{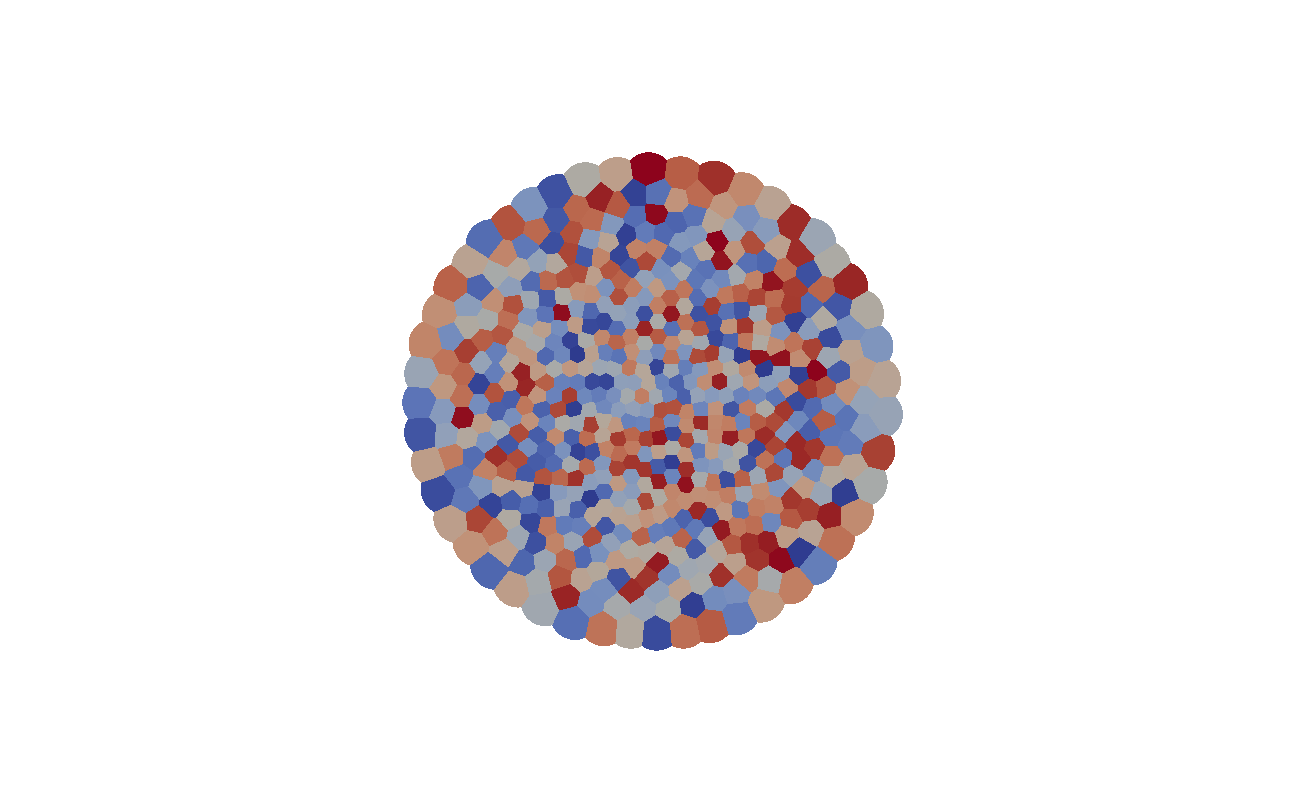}
        \includegraphics[trim = 350 0 150 60, clip = true, scale=.2]{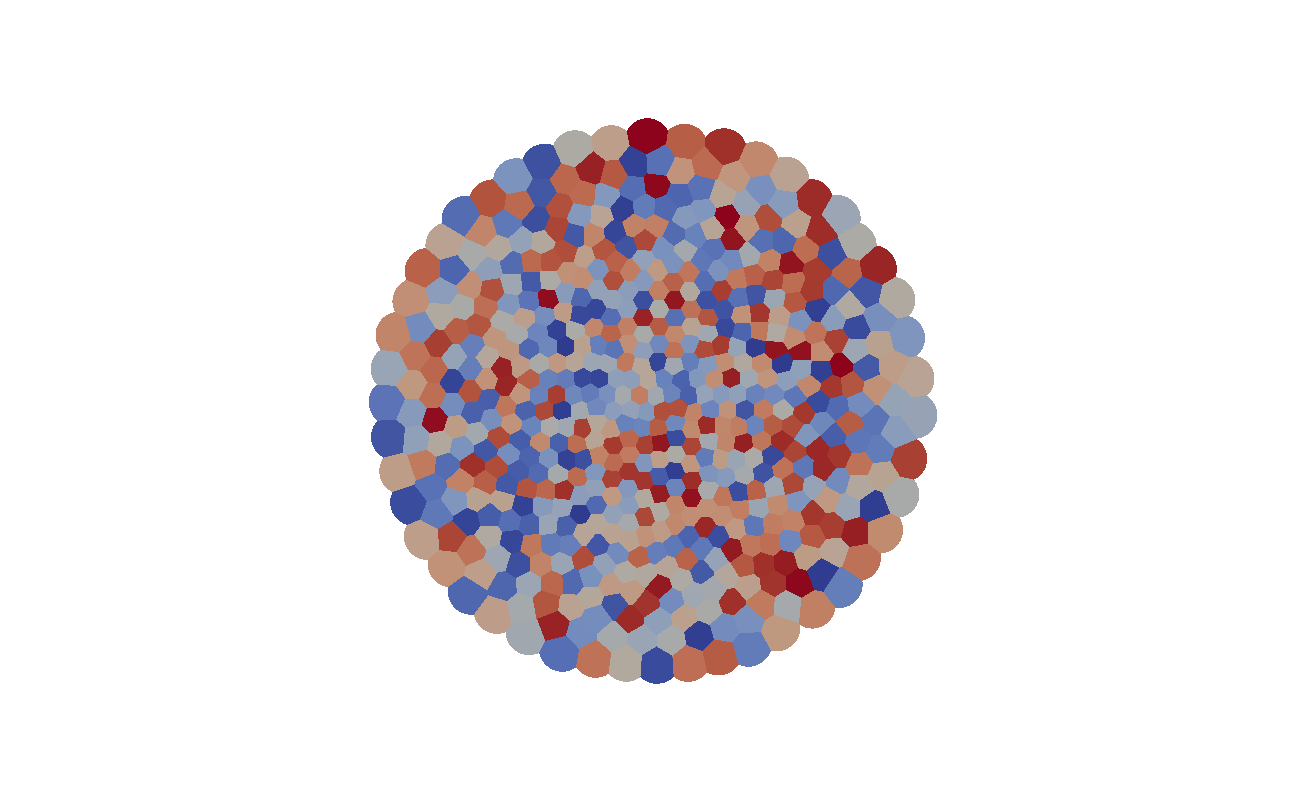}
    \end{center}
    \vspace{-2em}
    \caption{Evolution of the density $\rho_N^\varepsilon$ for the Barenblatt solution of the porous medium equation for $N=576$, and $\varepsilon = \sqrt{\tau} = 1/\sqrt{N}$. Upper row: weights evolution; lower row: Laguerre diagram evolution.}
    \label{fig:baren}
\end{figure}

\begin{table}
\begin{tabular}{lllll}
\hline
 $1/\sqrt{N}$   & $\Delta X$   & rate     & $\Delta {\mathcal{U}}$   & rate     \\
\hline
 1.25e-01       & 4.71e-02           & - & 1.66e-02                 & - \\
 6.25e-02       & 2.78e-02           & 7.62e-01 & 9.39e-03                 & 8.21e-01 \\
 3.12e-02       & 1.55e-02           & 8.44e-01 & 5.11e-03                 & 8.77e-01 \\
 1.56e-02       & 8.24e-03           & 9.08e-01 & 2.72e-03                 & 9.12e-01 \\
\hline
\end{tabular}
\vspace{.5em}

\caption{Errors and convegence rates for the Barenblatt solution of the porous medium equation, with $\varepsilon = \sqrt{\tau} = 1/\sqrt{N}$. }
\label{tab:baren}
\end{table}

\begin{figure}
    \begin{center}
        \includegraphics[scale=.8]{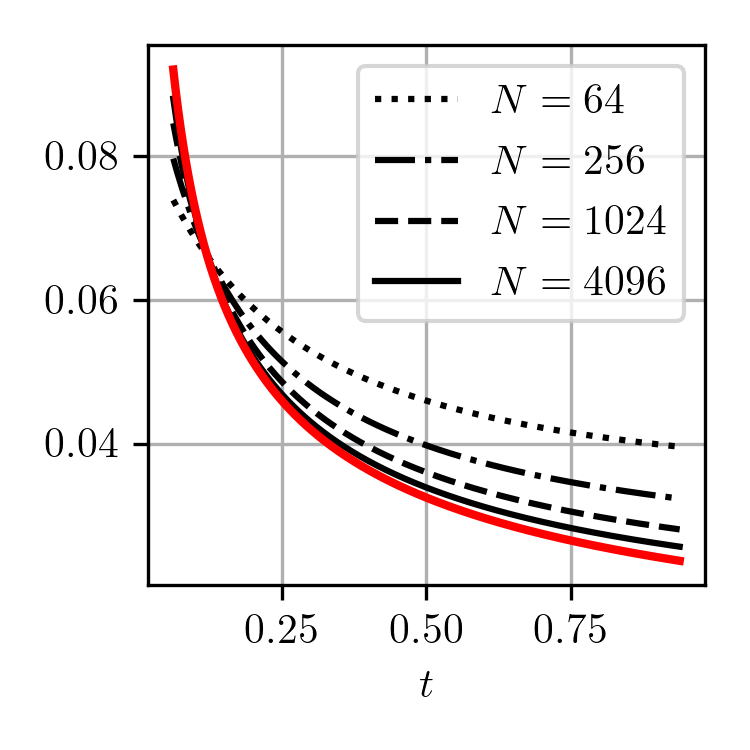}
    \end{center}
    \vspace{-2em}
    \caption{Time evolution of the discrete internal energy $\mc{U}_\varepsilon(\rho_N(t))$ for the Barenblatt solution of the porous medium equation (the red line corresponds to the exact energy evolution).}
        \label{fig:barenenergy}
\end{figure}

\subsection{Convergence: Euler equation}
We perform two different convergence tests for the Euler model \eqref{eq:euler}. For the first we construct an exact solution of the equation by a time rescaling of the Barenblatt solution above,  i.e.\ we take
\begin{equation}
\rho(t,x) = \frac{4}{1 + 2 t + 5 t^2} \left (C^2 - \frac{1}{4(1 + 2 t + 5 t^2)} |x|^2 \right)_+\,.
\end{equation}
This is an exact solution of the model associated with the Lagrangian flow
\begin{equation}
\flot(t,x) = x \sqrt{ 5 t^2 +2 t+ 1}
\end{equation}
and the initial conditions $\dot{\flot}(0,x) =x$.
In this case the exact kinetic and internal energy evolutions are given by
\begin{equation}
\mathcal{K}(t) = \frac{4\pi C^6(10 t+2)^2}{3(5t^2+2t+1)} \,, \quad \mathcal{U}(t) = \frac{64 \pi C^6}{3 (1 + 2 t + 5 t^2)}\,.
\end{equation}
For this test, we take $t_0=0$, $T=0.6$ and $C=1/3$, and we monitor the flow error $\Delta \flot$ defined in equation \eqref{eq:errorflow} and the total energy error
\begin{equation}
\Delta \mathcal{E}\coloneqq | \mathcal{E}_\varepsilon(T,\flot_N)-\mathcal{E}(T) | \,,
\end{equation}
where $\mathcal{E}(T) = \mathcal{K}(T)+ \mathcal{U}(T)$.

For the second test, we add to the system a linear confinement potential
\begin{equation}
V(x) = \frac{5}{8}|x|^2\,.
\end{equation}
Then, we consider the exact solutions associated with the steady density 
\begin{equation}
\rho(x) =  \left (C^2 - \frac{1}{16} |x|^2 \right)_+
\end{equation}
and the rigid rotation given by the flow
\begin{equation}
\flot(t,x) =  R(t) x \,, \quad R(t) =\left( \begin{array}{cc} \cos(t) & \sin(t) \\ -\sin(t)& \cos(t) \end{array}  \right) \,.
\end{equation}
Both the kinetic and internal energy are constant during the evolution and they are given by
\begin{equation}
\mathcal{K}(t) = \frac{64\pi C^6 }{3} \,, \quad \mathcal{U}(t) = \frac{16 \pi C^6}{3}\,.
\end{equation}
For this test we take $t_0=0$, $T=1$ and $C=1/3$, and  we monitor the same quantities as above. 

As before, for a given number of particles $N$, we take $\varepsilon = \sqrt{\tau} = 1/\sqrt{N}$, which implies a rate of convergence of $1/2$ according to Theorem \ref{th:convsemieuler} (see also Remark \ref{rem:quantization}).  Tables \ref{tab:bareneuler} and \ref{tab:rotation} collect the errors and the associated convergence rates for the two tests and confirm our error estimate. Figures \ref{fig:bareneulerenergy} and \ref{fig:rotationenergy} show the time evolution of the total, kinetic and internal energy; note that the discrete total energy decreases monotonically in accordance with the stability estimate \eqref{eq:energystab}.

\begin{table}
\begin{tabular}{lllll}
\hline
 $1/\sqrt{N}$   & $\Delta X$   & rate     & $\Delta{\mathcal{E}}$   & rate     \\
\hline
 1.25e-01       & 4.36e-02      & - & 2.46e-02            & - \\
 6.25e-02       & 2.77e-02      & 6.53e-01 & 1.68e-02            & 5.52e-01 \\
 3.12e-02       & 1.61e-02      & 7.83e-01 & 1.02e-02            & 7.13e-01 \\
 1.56e-02       & 8.80e-03      & 8.71e-01 & 5.71e-03            & 8.44e-01 \\
\hline
\end{tabular}

\vspace{.5em}

\caption{Errors and convegence rates for the Barenblatt solution of the Euler equations, with $\varepsilon = \sqrt{\tau} = 1/\sqrt{N}$.}
    \label{tab:bareneuler}
\end{table}

\begin{figure}
    \begin{center}
         \hspace{-.15em}\includegraphics[scale=.8]{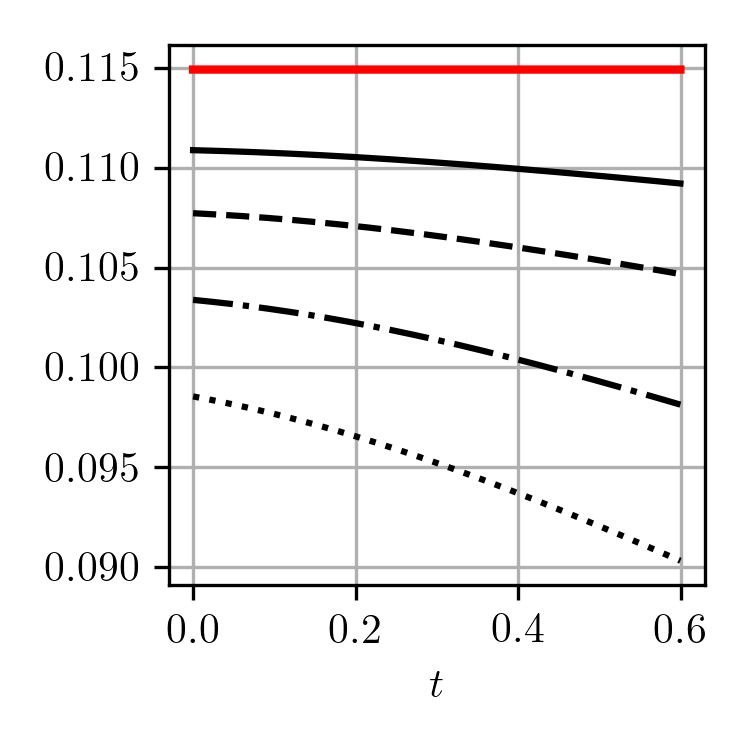}\hspace{-1.09em}
     \includegraphics[scale=.8]{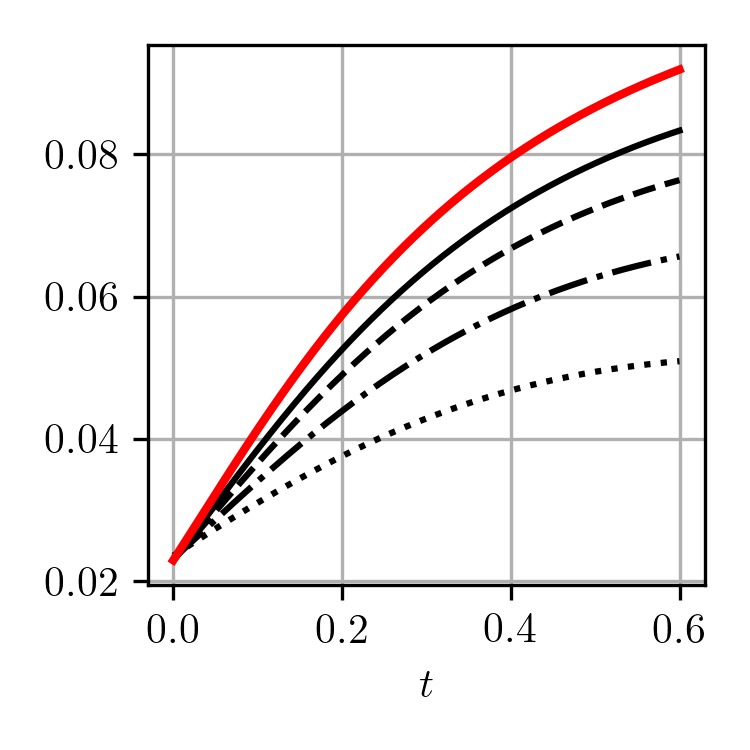}\hspace{-1.09em}
        \includegraphics[scale=.8]{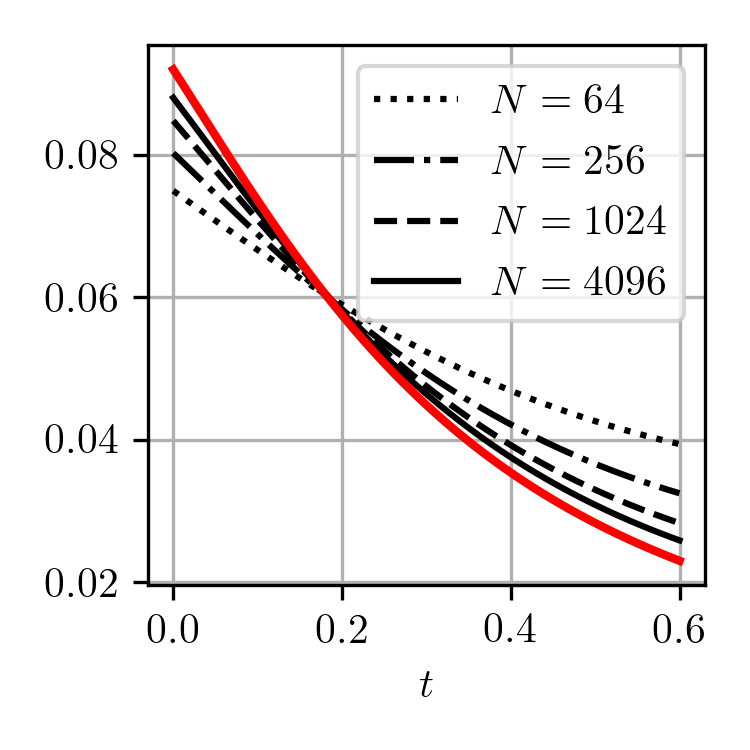}\hspace{-.15em}
    \end{center}
    \vspace{-2em}
    \caption{Time evolution of the discrete total energy $\mathcal{E}_\varepsilon(t,\flot_N)$ (left), kinetic energy (center), and internal energy $\mc{U}_\varepsilon(\rho_N(t))$ (right) for the Barenblatt solution of the Euler equations (the red line corresponds to the exact energy evolution).}
        \label{fig:bareneulerenergy}
\end{figure}

\begin{table}
\begin{tabular}{lllll}
\hline
 $1/\sqrt{N}$   & $\Delta X$   & rate     & $\Delta{\mathcal{E}}$   & rate     \\
\hline
 1.25e-01       & 7.28e-02      & - & 3.00e-02            &- \\
 6.25e-02       & 3.76e-02      & 9.55e-01 & 1.59e-02            & 9.18e-01 \\
 3.12e-02       & 1.92e-02      & 9.71e-01 & 8.16e-03            & 9.61e-01 \\
 1.56e-02       & 9.84e-03      & 9.61e-01 & 4.28e-03            & 9.29e-01 \\
\hline
\end{tabular}
\vspace{.5em}
\caption{Errors and convegence rates for the rigid rotation solution of the Equation equation, with $\varepsilon = \sqrt{\tau} = 1/\sqrt{N}$.}
    \label{tab:rotation}
\end{table}

\begin{figure}
    \begin{center}\hspace{-.5em}
         \includegraphics[scale=.8]{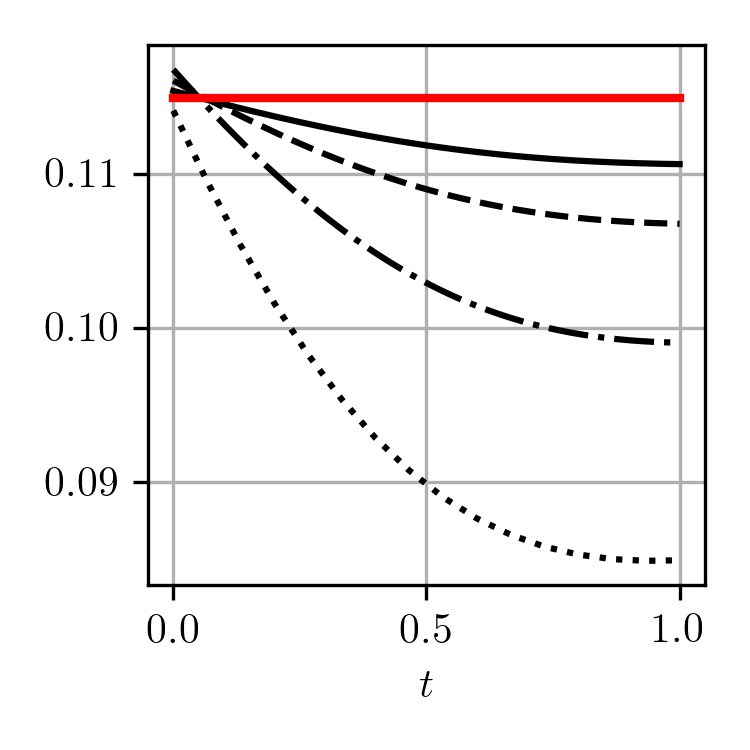}\hspace{-1.09em}
     \includegraphics[scale=.8]{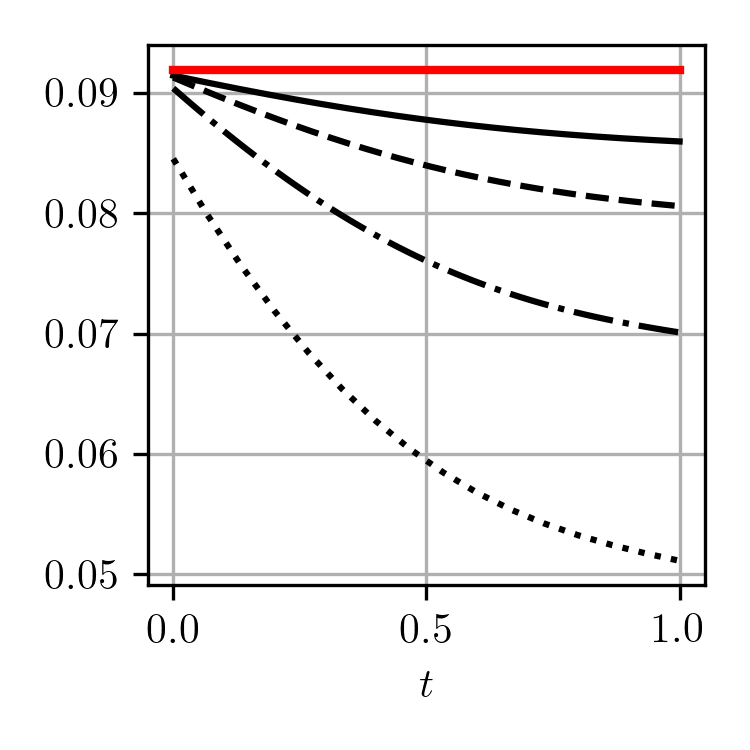}\hspace{-1.09em}
        \includegraphics[scale=.8]{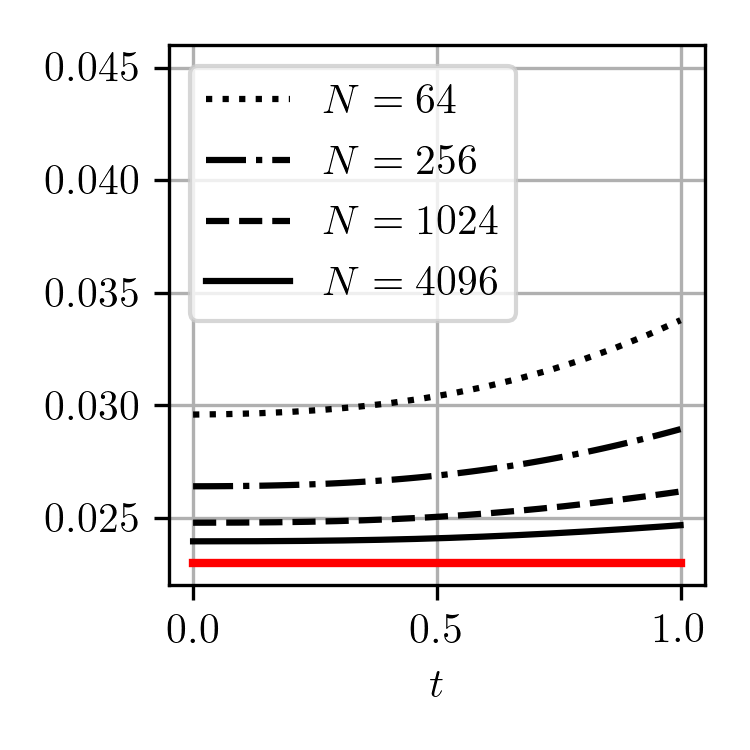}\hspace{-.5em}
    \end{center}
    \vspace{-2em}
    \caption{Time evolution of the discrete total energy $\mathcal{E}_\varepsilon(t,\flot_N)$ (left), kinetic energy (center), and internal energy $\mc{U}_\varepsilon(\rho_N(t))$ (right) for the rigid rotation solution of the Euler equations (the red line corresponds to the exact energy evolution).}    \label{fig:rotationenergy}
\end{figure}

\section*{Acknowledgements}
This work was supported by a public grant as part of the Investissement d'avenir project, reference ANR-11-LABX-0056-LMH, LabEx LMH, and by a grant from the French ANR (MAGA, ANR-16-CE40-0014).

\clearpage

\bibliographystyle{plain}      
\bibliography{refs}   

\end{document}